   \definecolor{cites}{rgb}{0.75 , 0.00 , 0.00}  
   \definecolor{urls} {rgb}{0.00 , 0.00 , 1.00}  
   \definecolor{links}{rgb}{0.00 , 0.00 , 0.5}   
\newcommand{\B}{\mathbb{B}}
\newcommand{\C}{\mathbb{C}}
\newcommand{\N}{\mathbb{N}}
\newcommand{\R}{\mathbb{R}}
\newcommand{\Z}{\mathbb{Z}}
\newcommand{\Kc}{\mathcal{K}}
\newcommand{\Lc}{\mathcal{L}}
\newcommand{\Sc}{\mathcal{S}}
\newcommand{\Mf}{\mathfrak{M}}
\newcommand{\Tf}{\mathfrak{T}}
\newcommand{\BDO}{\textup{BDO}}
\renewcommand{\epsilon}{\varepsilon}
\newcommand{\vertiii}[1]{{\left\vert\kern-0.25ex\left\vert\kern-0.25ex\left\vert #1
    \right\vert\kern-0.25ex\right\vert\kern-0.25ex\right\vert}}
\newcommand{\vertiiis}[1]{{\vert\kern-0.25ex\vert\kern-0.25ex\vert #1
    \vert\kern-0.25ex\vert\kern-0.25ex\vert}}
\DeclareMathOperator{\coker}{coker}
\DeclareMathOperator{\diam}{diam}
\DeclareMathOperator{\dist}{dist}
\DeclareMathOperator{\im}{im}
\DeclareMathOperator{\ess}{ess}
\DeclareMathOperator*{\slim}{s-\lim}
\DeclareMathOperator{\spec}{sp}
\DeclareMathOperator{\supp}{supp}
\DeclareMathOperator{\BUC}{BUC}
\DeclareMathOperator{\Osc}{Osc}
\DeclareMathOperator{\VO}{VO}
\DeclareMathOperator{\VMO}{VMO}
\DeclareMathOperator{\BC}{BC}
\newcommand{\from}{\colon}
\providecommand{\scpr}[2]{\left\langle #1, #2 \right\rangle}
\renewcommand{\sp}{\scpr}
\providecommand{\abs}[1]{\left\lvert#1\right\rvert}
\providecommand{\norm}[1]{\left\lVert#1\right\rVert}
\providecommand{\set}[1]{\left\{ #1\right\}}
\newtheorem{thm}{Theorem}
\newtheorem{mthm}{Theorem}
\newtheorem{lem}[thm]{Lemma}
\newtheorem{prop}[thm]{Proposition}
\newtheorem{cor}[thm]{Corollary}
\theoremstyle{definition}
\newtheorem{defn}[thm]{Definition}
\newtheorem{rem}[thm]{Remark}
\numberwithin{equation}{section}
\begin{document}
\title{\bf The Essential Spectrum of Toeplitz Operators on the Unit Ball}
\author{Raffael Hagger}
\maketitle
\vspace{-0.4cm}
\begin{abstract}
In this paper we study the Fredholm properties of Toeplitz operators acting on weighted Bergman spaces $A^p_{\nu}(\B^n)$, where $p \in (1,\infty)$ and $\B^n \subset \C^n$ denotes the $n$-dimensional open unit ball. Let $f$ be a continuous function on the Euclidean closure of $\B^n$. It is well-known that then the corresponding Toeplitz operator $T_f$ is Fredholm if and only if $f$ has no zeros on the boundary $\partial\B^n$. As a consequence, the essential spectrum of $T_f$ is given by the boundary values of $f$. We extend this result to all operators in the algebra generated by Toeplitz operators with bounded symbol (in a sense to be made precise down below). The main ideas are based on the work of Su\'arez et al.~(\cite{MiSuWi,Suarez}) and limit operator techniques coming from similar problems on the sequence space $\ell^p(\Z)$ (\cite{HaLiSe,LiSe,RaRoSi} and references therein).

\medskip
\textbf{AMS subject classification:} Primary: 47B35; Secondary: 32A36, 47A53, 47A10

\medskip
\textbf{Keywords:} Toeplitz operators, Bergman space, essential spectrum, limit operators
\end{abstract}

\section{Introduction} \label{Introduction}

Consider some measure space $(X,\mu)$ and a corresponding $L^p$-space for some $p \in (1,\infty)$, say. Further assume that there is a bounded projection $P$ onto a closed subspace $\Sc$ of $L^p(X,\mu)$. If we now decompose a multiplication operator parallel to this projection, we obtain a Toeplitz operator. More precisely, if $f \from X \to \C$ is an essentially bounded function and $M_f$ the corresponding multiplication operator on $L^p(X,\mu)$, the corresponding Toeplitz operator is given by $T_f := PM_f|_{\Sc}$. Toeplitz operators are one of the prime examples for non-normal operators and are thus extensively studied on various different domains, the most prominent example probably being the Hardy space over the circle. In our case here we are going to consider Toeplitz operators on the space of holomorphic $L^p$-functions defined on the complex open unit ball $\B^n$, which is called a Bergman space. Using the variables above, we consider $X = \B^n$ with a weighted Lebesgue measure $\mathrm{d}v_{\nu}$ for a weight parameter $\nu$ and we take $\Sc$ to be the closed subspace of holomorphic functions contained in $L^p_{\nu} := L^p(\B^n,\mathrm{d}v_{\nu})$, here denoted by $A^p_{\nu}$ (see Section \ref{Notation} for more details).

In this paper we are particularly interested in the Fredholm properties of Toeplitz operators. Recall that in the Hardy space case, a Toeplitz operator $T_f$ with a continuous symbol $f$ is Fredholm if and only if $f$ does not have any zeros. A similar result holds for the Bergman space: A Toeplitz operator $T_f$ with a symbol that can be continuously extended to the (Euclidean) boundary $\partial\B^n$ is Fredholm if and only if $f$ has no zeros on the boundary. This result was first established by Coburn in \cite{Coburn} and then generalized in many different directions by several authors (e.g.~\cite{ACM,BaCo,BeCo,ChoLe,McDonald,MiSuWi,StroeZhe,Suarez,Zeng,Zhu87}). One of the latest improvements (\cite[Theorem 10.3]{Suarez}, \cite[Theorem 5.8]{MiSuWi}) include the following result: Let $\Tf_{2,\nu} \subset \Lc(A^2_{\nu})$ denote the closed subalgebra generated by Toeplitz operators $T_f$ with $f \in L^{\infty}(\B^n)$. Then $A \in \Tf_{2,\nu}$ is Fredholm if and only if all of its limit operators are invertible and their inverses are uniformly bounded. Roughly speaking, limit operators are operators that appear when we shift our operator $A$ to the boundary of the domain (a more accurate definition is given in Section \ref{Limit_operators}). This theorem reminds of a seemingly unrelated result in the Fredholm theory of sequence spaces $\ell^p$. There, until a few years ago, one of the main theorems was stated as follows: A band-dominated\footnote{a certain property related to the structure of the corresponding infinite matrix} operator $A$ is Fredholm if and only if all of its limit operators are invertible and their inverses are uniformly bounded (see e.g.~\cite{RaRoSi}). There are a few problems with this characterization. Not only is the uniform boundedness condition difficult to work with, it also prevents us from writing the essential spectrum as the union of spectra of limit operators. As a consequence, many different authors worked out particular examples (see e.g.~\cite[Chapter 3]{Lindner} for a summary) where the uniform boundedness condition could be dropped. Moreover, as there was no known example where the uniform boundedness condition was actually violated, it was conjectured that this condition was actually redundant. And indeed, this was shown in \cite{LiSe} a few years ago. Now the goal of this paper is to show that the same is the case for Toeplitz operators on the Bergman space:

\begin{mthm} \label{main_result}
Let $\Tf_{p,\nu} \subset \Lc(A^p_{\nu})$ denote the closed subalgebra generated by Toeplitz operators with bounded symbol and $A \in \Tf_{p,\nu}$. Then the following are equivalent:
\begin{itemize}
\item[$(i)$] $A$ is Fredholm,
\item[$(ii)$] $A_x$ is invertible for all $x \in \Mf \setminus \B^n$ and $\sup\limits_{x \in \Mf \setminus \B^n} \norm{A_x^{-1}} < \infty$,
\item[$(iii)$] $A_x$ is invertible for all $x \in \Mf \setminus \B^n$.
\end{itemize}
\end{mthm}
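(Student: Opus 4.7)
The implication $(ii) \Rightarrow (iii)$ is trivial, and $(i) \Leftrightarrow (ii)$ is essentially the Bergman-space limit-operator / $\mathcal{P}$-Fredholm characterisation already proved for $p = 2$ as \cite[Theorem 10.3]{Suarez} and \cite[Theorem 5.8]{MiSuWi}. I will assume that its extension to general $p \in (1, \infty)$ has already been established in the earlier sections via the limit-operator framework of Section \ref{Limit_operators}, so that the true substance of Theorem \ref{main_result} lies in the new implication $(iii) \Rightarrow (ii)$: once invertibility of every limit operator is known, uniform boundedness of the inverses is automatic. This is the Bergman-space counterpart of the Lindner--Seidel theorem \cite{LiSe} on $\ell^p(\Z)$.

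My plan for $(iii) \Rightarrow (ii)$ is a proof by contradiction. Suppose that every $A_x$ with $x \in \Mf \setminus \B^n$ is invertible, but $\sup_x \|A_x^{-1}\| = \infty$. Then one can find points $x_k \in \Mf \setminus \B^n$ and unit vectors $f_k \in A^p_{\nu}$ with $\|A_{x_k} f_k\| \le 1/k$. Compactness of $\Mf$ allows us to pass to a subsequence with $x_k \to x_\infty \in \Mf \setminus \B^n$. The goal is to extract, by a diagonal argument over the double sequence defining each $A_{x_k}$, a limit operator at $x_\infty$ together with a sequence of unit vectors witnessing that $A_{x_\infty}$ is not bounded below, contradicting $(iii)$.

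The main difficulty is the diagonalisation step, namely the claim that a suitable ``limit of limit operators'' is itself a limit operator. In $\ell^p(\Z)$ this rests on the abelian, discrete structure of the shift group, which lets one iterate shifts freely. Here the analogous role is played by the M\"obius automorphisms $\varphi_z$ of $\B^n$ and the induced isometries $U_z$ on $A^p_{\nu}$, with each $A_x$ realised as a strong-operator limit $\lim_k U_{z_k}^{-1} A U_{z_k}$ along a net $z_k \to x$ in $\Mf$. The non-commutativity of the M\"obius group and the fact that $x \mapsto A_x$ is only continuous in a weak operator topology force one to identify a sequentially compact topology on $\{A_x : x \in \Mf\setminus \B^n\}$ in which the diagonal extraction can actually be performed; this is where results from \cite{HaLiSe} on the compactness and metrisability of the collection of limit operators should enter.

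Once this topological compactness is in hand, the remainder runs in close parallel with \cite{LiSe}: transport the approximate null vectors $f_k$ back along the appropriate $U_{z_k}$ so that, after a diagonal subsequence, they become unit vectors sent to zero by the diagonal-limit operator identified with $A_{x_\infty}$; a standard $3\epsilon$-argument (using uniform boundedness of $A$ and the continuity of the construction) then contradicts the invertibility of $A_{x_\infty}$ guaranteed by $(iii)$.
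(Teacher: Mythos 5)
The architecture of your sketch is exactly right: $(ii)\Rightarrow(iii)$ is trivial, $(i)\Leftrightarrow(ii)$ follows from the Fredholm criterion of Section \ref{Fredholm_criterion} combined with Proposition \ref{prop4} (one direction) and Theorem \ref{thm2} (the other), and the genuine novelty is showing that $(iii)$ already forces the uniform bound in $(ii)$, by the Lindner--Seidel mechanism. You also correctly identify that the crux is showing that a suitable ``limit of limit operators'' is again a limit operator at some $x_\infty \in \Mf \setminus \B^n$ and extracting a witness that $A_{x_\infty}$ fails to be bounded below. But everything you describe after that point — ``a diagonal argument over the double sequence,'' ``metrisability,'' ``a standard $3\epsilon$-argument'' — glosses over precisely the part of the argument that is hard, and in the form you state it, it fails.

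The reason it fails is that the lower norm $\nu(\cdot)$ is \emph{not} lower semicontinuous along strong operator convergence: from $A_{x_\gamma} \to A_{x_\infty}$ $*$-strongly and $\nu(A_{x_\gamma}) \to 0$ you cannot conclude $\nu(A_{x_\infty}) = 0$, because the unit vectors $f_\gamma$ with $\|A_{x_\gamma} f_\gamma\|\to 0$ may drift off to the boundary, where strong convergence says nothing. Passing to a subnet (not a subsequence — $\Mf$ is the maximal ideal space of $\BUC(\B^n)$ and is far from metrizable, so appealing to sequential compactness or metrisability from \cite{HaLiSe} is not available here) does nothing to remedy this. The paper closes this gap with three ingredients your sketch is missing. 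First, Proposition \ref{prop5} shows that for the band-dominated $\hat{A}$ and all its limit operators $\hat{A}_x$ simultaneously, the lower norm $\nu(\hat{A}_x|_F)$ can be approximated to within $\epsilon$ by the restricted quantity $\nu_t(\hat{A}_x|_F)$, in which one only tests against unit vectors supported on hyperbolic balls of a fixed (but large) radius $r_t$; this uniform localisation is what band-dominatedness actually buys. Second, Lemma \ref{lem5} gives a shift-invariance of the operator spectrum that re-centres such a ball at the origin while landing on another limit operator $\hat{A}_y$, controlling how the lower norms behave under re-centering. Third, Lemma \ref{lem6} runs the Seidel ``telescoping'' iteration across dyadic scales $r_{t_0} < r_{t_1} < \ldots$, repeatedly re-centering and shrinking the scale, so that after $n$ steps the approximate null vector lives in $D(0,4r_{t_l})$; only then does compactness of $\{A_x T_{b_x} : x\in\Mf\setminus\B^n\}$ (Proposition \ref{prop7}) combine with the compactness of $P_\alpha M_{\chi_{D(0,4r_{t_l})}}$ (Proposition \ref{prop2.5}) to upgrade $*$-strong convergence to \emph{norm} convergence of the restrictions $\hat{A}_{y_{n_\gamma}} M_{\chi_{D(0,4r_{t_l})}} \to \hat{A}_y M_{\chi_{D(0,4r_{t_l})}}$, which is what finally lets the lower norms pass to the limit. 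The conclusion is then not a contradiction at all but a clean positive statement: the infimum of $\nu(\hat{A}_x)$ over $x\in\Mf\setminus\B^n$ is attained at some $y$, so that if each $A_x$ is invertible then $\nu(\hat{A}_y)>0$ and $\sup_x\|A_x^{-1}\|<\infty$ automatically. Without Proposition \ref{prop5} and the re-centering procedure, the ``$3\epsilon$-argument'' you allude to has nothing to hold on to, and the non-metrizability of $\Mf$ also undermines the diagonalisation as stated.
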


Here, the $A_x$ denote the limit operators of $A$ and they are indexed over the boundary of a certain compactification $\Mf$ of $\B^n$. In particular, we extend \cite[Theorem 10.3]{Suarez} and \cite[Theorem 5.8]{MiSuWi} to the Banach space case $p \neq 2$ and show that the uniform boundedness condition is redundant just like it is in the sequence space case. As a consequence, we get
\[\spec_{\ess}(A) = \bigcup\limits_{x \in \Mf \setminus \B^n} \spec(A_x)\]
for all operators $A \in \Tf_{p,\nu}$.

The paper is organized as follows. In Section \ref{Notation} we introduce all the necessary notation and some preliminary results. Then we proceed by introducing what, in analogy to the sequence space case, we will call band-dominated operators and show some basic properties in Section \ref{BDO}. In particular, we show that Toeplitz operators are band-dominated. In Section \ref{Fredholm_criterion} we show a Fredholm criterion for band-dominated operators that will be crucial for the proof of Theorem \ref{main_result}. In Section \ref{Limit_operators} we introduce limit operators and finally show our main theorem. After that, we proceed by showing that a similar result holds for the essential norm of an operator $A \in \Tf_{p,\nu}$ in Section \ref{Norm_estimates}. However, our result is less complete in this case (compare with the corresponding result on $\ell^p$: \cite[Theorem 3.2]{HaLiSe}) and leaves some questions open. Section \ref{VO} is devoted to some applications of Theorem \ref{main_result}.

Note that similar results are expected to hold for more general domains. Sections \ref{Notation} to \ref{Fredholm_criterion} are in fact valid word by word for any bounded symmetric domain $\Omega \subset \C^n$. For future reference we therefore chose to provide full generality in these sections. However, in Sections \ref{Limit_operators} and \ref{Norm_estimates} there are some open problems in the most general case, which is part of the reason why we restrict ourselves to $\Omega = \B^n$ in this paper. A more general setting will be the topic of future work. The reader who is only interested in the unit ball (which is quite frankly the topic of this paper) may replace any $\Omega$ by $\B^n$, ignore the specific notions for bounded symmetric domains and use the respective (more explicit) formulas.

\section{Notation and preliminary results} \label{Notation}

Let $\Omega \subset \C^n$ be an irreducible bounded symmetric domain of genus $g$ and type $(r,a,b)$ in its Harish-Chandra realization with corresponding Bergman metric $\beta$ and Jordan triple determinant $h$. As we only need a handful of properties of bounded symmetric domains, we do not provide an introduction to these notions here. Instead, we refer to \cite{Englis,Upmeier} for introductions and just mention the properties we actually need. For $p \in (1,\infty)$ and $\nu > -1$ denote by $L^p_{\nu}$ the usual Lebesgue space of $p$-integrable functions on $(\Omega,\mathrm{d}v_{\nu})$, where
\[\mathrm{d}v_{\nu}(z) = c_{\nu}h(z,z)^{\nu} \, \mathrm{d}v(z),\]
$\mathrm{d}v$ is the usual Lebesgue measure restricted to $\Omega$ and $c_{\nu}$ is a normalizing constant chosen such that $\mathrm{d}v_{\nu}(\Omega) = 1$. The (unique) geodesic symmetry interchanging $0$ and $z$ is denoted by $\phi_z$. In particular, these symmetries $\phi_z$ satisfy
\[\beta(\phi_z(x),\phi_z(y)) = \beta(x,y)\]
for all $x,y \in \Omega$. Moreover, $h$ and $\mathrm{d}v_{\nu}$ transform under $\phi_z$ as follows:
\begin{align*}
h(\phi_z(x),\phi_z(y)) &= \frac{h(z,z)h(x,y)}{h(x,z)h(z,y)}\\
\mathrm{d}v_{\nu}(\phi_z(w)) &= \frac{h(z,z)^{\nu+g}}{\abs{h(w,z)}^{2(\nu+g)}} \mathrm{d}v_{\nu}(w)
\end{align*}
(see \cite{Englis} for details).

Note that for the unit ball $\Omega = \B^n$ we have $(r,a,b) = (1,2,n-1)$ and $g = n+1$. Moreover, the Bergman metric is the usual hyperbolic metric on the unit ball and the Jordan triple determinant is simply given by $h(z,w) = 1 - \sp{z}{w}$ in this case. In case $n = 1$, $\phi_z$ is given explicitly by the M\"obius transform $w \mapsto \frac{z-w}{1-w\bar{z}}$. We refer to \cite{Zhu} for an explicit description of $\phi_z$ in higher dimensions.

The (closed) subspace of holomorphic functions contained in $L^p_{\nu}$ is denoted by $A^p_{\nu}$ and called a weighted Bergman space. The set of bounded linear operators between Banach spaces $X$ and $Y$ is denoted by $\Lc(X,Y)$ and we abbreviate $\Lc(X) := \Lc(X,X)$. The set of compact operators in $\Lc(X)$ will be denoted by $\Kc(X)$. $A \in \Lc(X)$ is called Fredholm if it is invertible modulo $\Kc(X)$, i.e.~if there exists $B \in \Lc(X)$ such that both $AB-I$ and $BA-I$ are compact. Equivalently, $A$ is Fredholm if and only if $\ker A$ and $\coker A$ are both finite-dimensional (Atkinson's theorem). The essential spectrum of an operator $A$ will be denoted by $\spec_{\ess}(A)$ and is given by
\[\spec_{\ess}(A) = \set{\lambda \in \C : A - \lambda I \text{ is not Fredholm}}.\]
We will say that a net (sequence, series, etc.) of operators converges $*$-strongly if the net converges strongly and the net of adjoints converges strongly to the adjoint. The characteristic function of a set $M$ will be denoted by $\chi_M$.

Let $P_{\nu} \in \Lc(L^2_{\nu})$ be the orthogonal projection onto $A^2_{\nu}$, called the Bergman projection. One can show (see e.g.~\cite{Englis} or \cite{Upmeier}) that $P_{\nu}$ is given by
\[(P_{\nu}f)(z) = \int_{\Omega} f(w)h(z,w)^{-\nu-g} \, \mathrm{d}v_{\nu}(w).\]
Using the same formula also for $p \neq 2$, we can define for every $f \in L^{\infty}(\Omega)$ the corresponding Toeplitz operator $T_f := P_{\nu}M_f|_{A^p_{\nu}}$, where $(M_f)g = f \cdot g$ for all $g \in L^p_{\nu}$. $T_f$ then defines a bounded linear operator on $A^p_{\nu}$ with $\norm{T_f} \leq \norm{P_{\nu}}\norm{f}_{\infty}$, provided that $P_{\nu}$ is indeeed a bounded linear operator on $L^p_{\nu}$. The function $f$ is called the symbol of $T_f$ and $M_f$, respectively. The algebra generated by all Toeplitz operators acting on $A^p_{\nu}$ will be denoted by $\Tf_{p,\nu}$.

The next proposition provides a sufficient condition for $P_{\nu}$ to be bounded. Note that this is certainly not optimal if $r > 1$ as the case $\alpha = \nu$, $p = 2$ demonstrates. For a more optimal condition in the case $\alpha = \nu$ we refer to \cite[Lemma 9]{Englis} (which is a special case of \cite[Theorem II.7]{BeTe}).

\begin{prop} \label{BoundedProjections}
Let $p(\alpha+1) > \nu + 1 + \frac{(r-1)a}{2} > p\frac{(r-1)a}{2}$. Then $P_{\alpha} \in \Lc(L^p_{\nu})$ and $P_{\alpha}f = f$ for all $f \in A^p_{\nu}$. In particular, $P_{\alpha}$ is a bounded projection onto $\im(P_{\alpha}) = A^p_{\nu}$.
\end{prop}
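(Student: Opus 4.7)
The plan is to split the proof into two parts: boundedness of $P_\alpha$ on $L^p_\nu$ via Schur's test, and the reproducing identity $P_\alpha f = f$ on $A^p_\nu$ via a density argument.

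For boundedness I would apply the standard Schur test on $L^p(\mathrm{d}v_\nu)$ to the positive majorant of $P_\alpha$. Rewriting $P_\alpha f(z) = \int_\Omega f(w) h(z,w)^{-\alpha-g}\,\mathrm{d}v_\alpha(w)$ against the reference measure $\mathrm{d}v_\nu$, the relevant nonnegative kernel is
\[\widetilde K(z,w) = \tfrac{c_\alpha}{c_\nu}\,\abs{h(z,w)}^{-\alpha-g}\,h(w,w)^{\alpha-\nu}.\]
Testing with $\varphi(z) = h(z,z)^{-s}$ for a single parameter $s>0$ to be chosen, the two Schur conditions collapse to integrals of the form $\int_\Omega h(w,w)^\beta \abs{h(z,w)}^{-\alpha-g}\,\mathrm{d}v(w)$ with $\beta = \alpha - sp'$ and $\beta = \nu - sp$, respectively. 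These are controlled by the Forelli--Rudin-type estimate on bounded symmetric domains: provided $\beta > -1$ and $\alpha+g-\beta > g + \tfrac{(r-1)a}{2}$,
\[\int_\Omega h(w,w)^\beta \abs{h(z,w)}^{-\alpha-g}\,\mathrm{d}v(w) \asymp h(z,z)^{\beta-\alpha},\]
which on $\B^n$ reduces to the classical Forelli--Rudin lemma (since $(r-1)a=0$) and in general can be cited from the Jordan-theoretic literature (e.g.\ \cite{Englis,Upmeier}).

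Feeding the two choices of $\beta$ into this estimate, the two Schur conditions become the open interval constraints
\[\tfrac{(r-1)a}{2p'} < s < \tfrac{\alpha+1}{p'}, \qquad \tfrac{\nu-\alpha+(r-1)a/2}{p} < s < \tfrac{\nu+1}{p},\]
so an admissible $s$ exists precisely when the four cross-endpoint inequalities hold. Using $p' = p/(p-1)$, the inequality $\tfrac{\nu-\alpha+(r-1)a/2}{p} < \tfrac{\alpha+1}{p'}$ is equivalent to $p(\alpha+1) > \nu + 1 + \tfrac{(r-1)a}{2}$, and $\tfrac{(r-1)a}{2p'} < \tfrac{\nu+1}{p}$ is equivalent to $\nu + 1 + \tfrac{(r-1)a}{2} > p \tfrac{(r-1)a}{2}$; the remaining two inequalities both reduce to $\alpha + 1 > \tfrac{(r-1)a}{2}$, which is immediate by chaining the two hypotheses. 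Once boundedness is established, the identity $P_\alpha f = f$ on $A^p_\nu$ follows from the reproducing formula on $A^2_\alpha$ by density of polynomials in $A^p_\nu$ and continuity of $P_\alpha$ on $L^p_\nu$.

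The main obstacle I anticipate is the careful bookkeeping of the four parameter inequalities and the precise invocation of the Forelli--Rudin-type estimate in the general bounded symmetric domain setting (which is considerably more delicate than on $\B^n$, where the rank correction $(r-1)a/2$ disappears); the remainder of the argument is essentially routine.
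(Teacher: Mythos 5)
Your proposal is correct and follows essentially the same route as the paper: reduce to the positive Schur test against $\mathrm{d}v_\nu$ with test function a power of $h(z,z)$, invoke the Rudin--Forelli estimates (the paper cites \cite[Theorem 4.1]{FaKo}) to turn both Schur conditions into open intervals for the exponent, check the cross-endpoint inequalities match the hypotheses, and finish $P_\alpha f = f$ by density; your parameter $s>0$ is just the negative of the paper's, and your bookkeeping of the four endpoint inequalities is if anything a bit more explicit than the paper's ``a simple computation shows.'' The only point you leave implicit is the verification that $P_\alpha$ actually maps all of $L^p_\nu$ into $A^p_\nu$ (not just that it fixes $A^p_\nu$), which the paper handles by the same density argument applied to $L^2_\alpha \cap L^p_\nu$.
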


\begin{proof}
By definition,
\[\abs{(P_{\alpha}f)(z)} \leq \int_{\Omega} \abs{h(z,w)}^{-\alpha-g}\abs{f(w)} \, \mathrm{d}v_{\alpha}(w) = \frac{c_{\alpha}}{c_{\nu}}\int_{\Omega} \abs{h(z,w)}^{-\alpha-g}h(w,w)^{\alpha-\nu}\abs{f(w)} \, \mathrm{d}v_{\nu}(w).\]
We want to show that the integral operator with kernel $R(z,w) = \abs{h(z,w)}^{-\alpha-g}h(w,w)^{\alpha-\nu}$ is bounded on $L^p_{\nu}$. To do this, we apply the Schur test with the test function $h(z) := h(z,z)^s$, where $s \in \R$ is to be determined later. We thus need to show that there exists a constant $C$ such that
\begin{align*}
\int_{\Omega} \abs{h(z,w)}^{-\alpha-g}h(w,w)^{\alpha-\nu}h(w,w)^{sq} \, \mathrm{d}v_{\nu}(w) &= c_{\nu}\int_{\Omega} \abs{h(z,w)}^{-\alpha-g}h(w,w)^{sq+\alpha} \, \mathrm{d}v(w)\\
&\leq Ch(z,z)^{sq},\\
\int_{\Omega} \abs{h(z,w)}^{-\alpha-g}h(w,w)^{\alpha-\nu}h(z,z)^{sp} \, \mathrm{d}v_{\nu}(z) &= c_{\nu}h(w,w)^{\alpha-\nu}\int_{\Omega} \abs{h(z,w)}^{-\alpha-g}h(z,z)^{sp+\nu} \, \mathrm{d}v(z)\\
&\leq Ch(w,w)^{sp},
\end{align*}
where $\frac{1}{p} + \frac{1}{q} = 1$. By \cite[Theorem 4.1]{FaKo}, the first inequality holds for $s \in (-\frac{\alpha+1}{q},-\frac{(r-1)a}{2q})$, whereas the second one holds for $s \in (-\frac{\nu+1}{p},-\frac{\nu}{p} + \frac{\alpha}{p} - \frac{(r-1)a}{2p})$. A simple computation shows that $(-\frac{\alpha+1}{q},-\frac{(r-1)a}{2q}) \cap (-\frac{\nu+1}{p},-\frac{\nu}{p} + \frac{\alpha}{p} - \frac{(r-1)a}{2p})$ is non-empty if the inequalities stated in the proposition are assumed (cf.~\cite[Theorem 2.11]{Zhu} in the case $\Omega = \B^n$). Thus $P_{\alpha} \in \Lc(L^p_{\nu})$.

As $P_{\alpha} \from L^2_{\alpha} \to A^2_{\alpha}$ is the orthogonal projection, we get $P_{\alpha}f = f$ for $f \in A^2_{\alpha} \cap A^p_{\nu}$. As $A^2_{\alpha} \cap A^p_{\nu}$ is dense in $A^p_{\nu}$ (polynomials are dense), this generalizes to all of $A^p_{\nu}$. Similarly, $P_{\alpha}f \in A^p_{\nu}$ for $f \in L^2_{\alpha} \cap L^p_{\nu}$ generalizes to all of $L^p_{\nu}$. Therefore $P_{\alpha} \from L^p_{\nu} \to A^p_{\nu}$ is a bounded projection as well.
\end{proof}

For the unit ball $\B^n$ the condition on $\nu$ and $\alpha$ simplifies significantly because we have $r = 1$ in this case.

\begin{cor} \label{BoundedProjectionsCor}
Let $\Omega = \B^n$ and $p(\alpha+1) > \nu + 1 > 0$. Then $P_{\alpha} \in \Lc(L^p_{\nu})$ and $P_{\alpha}f = f$ for all $f \in A^p_{\nu}$. In particular, $P_{\alpha}$ is a bounded projection onto $\im(P_{\alpha}) = A^p_{\nu}$.
\end{cor}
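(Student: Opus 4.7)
The plan is simply to specialize Proposition~\ref{BoundedProjections} to the case $\Omega = \B^n$. As already recorded after the statement of the proposition, the unit ball has type parameters $(r,a,b) = (1,2,n-1)$ and genus $g = n+1$. The only quantity in the hypothesis of Proposition~\ref{BoundedProjections} that depends on the rank is $\frac{(r-1)a}{2}$, which vanishes here because $r = 1$. Substituting this in, the chain of inequalities
\[p(\alpha+1) > \nu + 1 + \tfrac{(r-1)a}{2} > p \cdot \tfrac{(r-1)a}{2}\]
collapses to
\[p(\alpha+1) > \nu + 1 > 0,\]
which is exactly the hypothesis of the corollary.

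Hence the hypotheses of Proposition~\ref{BoundedProjections} are satisfied, and its conclusion is precisely the conclusion of the corollary: $P_\alpha \in \Lc(L^p_\nu)$, $P_\alpha f = f$ for every $f \in A^p_\nu$, and consequently $P_\alpha$ is a bounded projection from $L^p_\nu$ onto $A^p_\nu$. There is no real obstacle here, since the corollary is a direct specialization; the only thing to verify is the arithmetic reduction of the rank-dependent inequalities, which is immediate. (For a reader familiar only with the unit-ball case, this is the customary range of parameters, agreeing with \cite[Theorem~2.11]{Zhu}.)
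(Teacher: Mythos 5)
Your proposal is correct and matches the paper's (implicit) reasoning exactly: the corollary is the specialization of Proposition~\ref{BoundedProjections} to $\Omega = \B^n$, where $r = 1$ makes the term $\frac{(r-1)a}{2}$ vanish and collapses the hypotheses to $p(\alpha+1) > \nu+1 > 0$. Nothing more is needed.
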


\begin{defn}
We will call $(\alpha,\nu,p) \in \R^2 \times (1,\infty)$ an admissible triple (for $\Omega$) if the inequalities in Proposition \ref{BoundedProjections} are satisfied.
\end{defn}

Note that by Corollary \ref{BoundedProjectionsCor}, $(\nu,\nu,p)$ is always admissible if $\Omega = \B^n$. Therefore the Toeplitz algebra $\Tf_{p,\nu}$ can be defined for all $\nu > -1$ and $p \in (1,\infty)$ in this case. For a more general bounded symmetric domain $\Omega$ we will always assume that $\nu$ and $p$ are chosen in such a way that $(\nu,\nu,p)$ is admissible. Clearly, this assumption also implies that $(\alpha,\nu,p)$ will always be admissible provided that $\alpha \geq \nu$. This observation will be crucial later on.

We will also need the following two simple propositions that will be used several times later on. The first one is basically a sloppy version of Jensen's inequality, but sufficient for our purposes.

\begin{prop} \label{AuxiliaryProp}
Let $n \in \N$, $p \in (1,\infty)$ and $x_1, \ldots, x_n \geq 0$. Then
\[\left(\sum_{k = 1}^n x_k\right)^p \leq n^p\sum_{k = 1}^n x_k^p.\]
\end{prop}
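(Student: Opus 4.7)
The plan is to use the elementary bound $\sum_{k=1}^n x_k \leq n \cdot \max_{1 \leq k \leq n} x_k$, which holds since each $x_k$ is nonnegative and at most the maximum. Raising both sides to the $p$-th power is legitimate because $t \mapsto t^p$ is monotone nondecreasing on $[0,\infty)$, so
\[\left(\sum_{k=1}^n x_k\right)^p \leq n^p \max_{1 \leq k \leq n} x_k^p \leq n^p \sum_{k=1}^n x_k^p,\]
where the last step uses that a single nonnegative term is dominated by the full sum. This is all there is to it.

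An alternative would be to invoke convexity of $t \mapsto t^p$ on $[0,\infty)$ (valid since $p > 1$) and apply Jensen's inequality to the uniform average, which yields the sharper constant $n^{p-1}$ instead of $n^p$. Since the author explicitly advertises the statement as a ``sloppy version of Jensen's inequality'' and only the crude bound $n^p$ is needed in the sequel, there is no reason to pursue the sharper route. I expect no obstacle; I would simply present the max-bound one-liner above.
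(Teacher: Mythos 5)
Your max-bound argument is correct and is precisely the paper's proof: bound the sum by $n$ times the maximum, raise to the $p$-th power, and dominate the single maximal term by the full sum. The aside about Jensen giving the sharper constant $n^{p-1}$ is accurate but, as you say, unnecessary here.
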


\begin{proof}
Obviously,
\[\left(\sum_{k = 1}^n x_k\right)^p \leq \left(n\max\limits_{k = 1, \ldots, n} x_k\right)^p = n^p \max\limits_{k = 1, \ldots, n} x_k^p \leq n^p\sum_{k = 1}^n x_k^p.\]
\end{proof}

\begin{prop} \label{AuxiliaryProp2}
Let $(U_k)_{k \in \N}$ be a sequence of measurable sets in $\Omega$ such that every $z \in \Omega$ belongs to at most $N$ of the sets $U_k$ and let $f \in L^p_{\nu}$. Then
\[\sum\limits_{k = 1}^{\infty} \int_{U_k} \abs{f(z)}^p \, \mathrm{d}v_{\nu}(z) \leq N\norm{f}^p.\]
\end{prop}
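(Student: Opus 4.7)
The plan is to rewrite each summand as an integral over all of $\Omega$ by multiplying with a characteristic function, exchange sum and integral by the Tonelli--Fubini theorem (which applies since everything is non-negative and measurable), and then use the hypothesis on the sets $U_k$ pointwise.

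More concretely, I would first observe that
\[\int_{U_k} \abs{f(z)}^p \, \mathrm{d}v_{\nu}(z) = \int_{\Omega} \chi_{U_k}(z) \abs{f(z)}^p \, \mathrm{d}v_{\nu}(z),\]
so that summing over $k$ and interchanging the order (Tonelli) yields
\[\sum_{k = 1}^{\infty} \int_{U_k} \abs{f(z)}^p \, \mathrm{d}v_{\nu}(z) = \int_{\Omega} \left(\sum_{k = 1}^{\infty} \chi_{U_k}(z)\right) \abs{f(z)}^p \, \mathrm{d}v_{\nu}(z).\]
The hypothesis that every $z \in \Omega$ belongs to at most $N$ of the sets $U_k$ is exactly the statement that $\sum_{k=1}^{\infty} \chi_{U_k}(z) \leq N$ for every $z \in \Omega$. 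Inserting this pointwise bound gives
\[\int_{\Omega} \left(\sum_{k = 1}^{\infty} \chi_{U_k}(z)\right) \abs{f(z)}^p \, \mathrm{d}v_{\nu}(z) \leq N \int_{\Omega} \abs{f(z)}^p \, \mathrm{d}v_{\nu}(z) = N \norm{f}^p,\]
which is the desired inequality.

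There is essentially no obstacle here: the result is a one-line consequence of Tonelli's theorem combined with the fact that the finite-overlap condition is equivalent to a pointwise bound on the sum of the characteristic functions. The only thing to note is that all integrands are non-negative, so the exchange of sum and integral is unconditionally justified and does not require any integrability assumption beyond $f \in L^p_{\nu}$.
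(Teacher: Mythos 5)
Your proof is correct. It differs from the paper's argument, which instead invokes a combinatorial decomposition (cited from Su\'arez): each $U_k$ is split into $N$ measurable pieces $A_k^1, \ldots, A_k^N$ so that for each fixed $i$ the family $(A_k^i)_{k \in \N}$ is pairwise disjoint, and then the sum is reorganized as $\sum_{i=1}^N \sum_k \int_{A_k^i} \abs{f}^p \, \mathrm{d}v_\nu \le \sum_{i=1}^N \norm{f}^p = N\norm{f}^p$. Your route replaces that decomposition by the observation that the finite-overlap hypothesis is exactly the pointwise bound $\sum_k \chi_{U_k} \le N$, after which Tonelli does all the work. The two proofs are of comparable length, but yours is self-contained and avoids the external reference; the paper's decomposition, on the other hand, is reused elsewhere (e.g.\ in the proof of Lemma \ref{lem1}), which is presumably why the author states it in that form.
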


\begin{proof}
For every $k \in \N$ there is a disjoint decomposition $U_k = A_k^1 \cup \ldots \cup A_k^N$ such that the sets $(A_k^i)_{k \in \N}$ are again measurable and pairwise disjoint for every $i \in \set{1, \ldots, N}$ (see \cite[p.~2195]{Suarez} for details). Thus
\[\sum\limits_{k = 1}^{\infty} \int_{U_k} \abs{f(z)}^p \, \mathrm{d}v_{\nu}(z) = \sum_{i = 1}^N\sum\limits_{k = 1}^{\infty} \int_{A_k^i} \abs{f(z)}^p \, \mathrm{d}v_{\nu}(z) \leq \sum_{i = 1}^N \int_{\Omega} \abs{f(z)}^p \, \mathrm{d}v_{\nu}(z) = N\norm{f}^p.\qedhere\]
\end{proof}

\section{Band-dominated operators} \label{BDO}
In this section we introduce the notion of band-dominated operators. The name is chosen in analogy to the sequence space case $\ell^p(\Z)$, where band-dominated operators are in fact norm limits of infinite band matrices, see e.g.~\cite{HaLiSe, Lindner, LiSe, RaRoSi, RaRoSi2, Seidel}.

\begin{defn} \label{defn_BDO}
An operator $A \in \Lc(L^p_{\nu})$ is called a band operator if there exists a positive real number $\omega$ such that $M_fAM_g = 0$ for all $f,g \in L^{\infty}(\Omega)$ with $\dist_{\beta}(\supp f,\supp g) > \omega$. The number
\[\inf\set{\omega \in \R : M_fAM_g = 0 \text{ for all } f,g \in L^{\infty}(\Omega) \text{ with } \dist_{\beta}(\supp f,\supp g) > \omega}\]
is called the band width of $A$. An operator $A \in \Lc(L^p_{\nu})$ is called band-dominated if it is the norm limit of band operators. The set of band-dominated operators will be denoted by $\BDO^p_{\nu}$.
\end{defn}

The definition of band-dominated operators can be extended to operators acting on the Bergman space $A^p_{\nu}$. If $(\alpha,\nu,p)$ is admissible and $Q_{\alpha} := I - P_{\alpha}$, we can consider the natural extension $\hat{A} := AP_{\alpha} + Q_{\alpha}$ of $A \in \Lc(A^p_{\nu})$. An operator acting on $A^p_{\nu}$ is then called band-dominated if its extension is band-dominated. As will be immediate, this definition does not depend on the chosen extension. In this language the main result of this section reads as follows: $\hat{A} \in \BDO^p_{\nu}$ for all $A \in \Tf_{p,\nu}$, i.e.~Toeplitz operators are band-dominated.

\begin{thm} \label{thm0}
Let $(\alpha,\nu,p)$ be an admissible triple and $A \in \Tf_{p,\nu}$. Then $\hat{A} \in \BDO^p_{\nu}$.
\end{thm}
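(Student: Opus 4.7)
The plan has two parts: a short algebraic reduction to the statement that the Bergman projections are band-dominated, and a more analytic core that establishes that statement.

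For the reduction, a direct computation using $P_\alpha B P_\alpha = BP_\alpha$ (valid because $B \in \Lc(A^p_\nu)$ maps $A^p_\nu$ into itself and $P_\alpha|_{A^p_\nu}=\id$) yields $\widehat{AB}=\hat{A}\hat{B}$ and $\widehat{A+B}=\hat{A}+\hat{B}-Q_\alpha$. Combined with the obvious estimate $\|\hat A-\hat B\| \leq \|P_\alpha\|\,\|A-B\|$, these identities show that, once $Q_\alpha \in \BDO^p_\nu$ and $\BDO^p_\nu$ is a closed subalgebra of $\Lc(L^p_\nu)$, the set $\{A \in \Lc(A^p_\nu) : \hat A \in \BDO^p_\nu\}$ is a closed subalgebra of $\Lc(A^p_\nu)$. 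That $\BDO^p_\nu$ is a closed subalgebra is the standard cutoff argument: every $M_f$ with $f \in L^\infty(\Omega)$ is a band operator of width $0$, and splitting $M_f AB M_g = M_f A (M_{\chi_E} + M_{\chi_{E^c}}) B M_g$ at $E = \{z : \dist_\beta(z, \supp f) > \omega_1 + \epsilon\}$ shows that products of band operators of widths $\omega_1, \omega_2$ have band width at most $\omega_1 + \omega_2$. Since $\hat{T_f} = P_\nu M_f P_\alpha + (I - P_\alpha)$, the whole theorem reduces to proving $P_\alpha \in \BDO^p_\nu$ and $P_\nu \in \BDO^p_\nu$.

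The analytic core, and the main obstacle, is the approximation of the Bergman projection by band operators. I would take as candidate the truncated integral operator $P_\alpha^{(R)}$ with kernel $h(z,w)^{-\alpha-g} \chi_{\{\beta(z,w) \leq R\}}$; by the support condition on its kernel, $P_\alpha^{(R)}$ is a band operator of width at most $R$. It then remains to prove
\[
\bigl\|P_\alpha - P_\alpha^{(R)}\bigr\|_{\Lc(L^p_\nu)} \longrightarrow 0 \qquad (R \to \infty).
\]
The quantitative input is the off-diagonal decay of the Bergman kernel: in the ball this is $|h(z,w)|^{-2} \asymp (1 - |\phi_z(w)|^2)/[h(z,z)h(w,w)]$ together with $1 - |\phi_z(w)|^2 \asymp e^{-2\beta(z,w)}$, giving exponential decay of $|h(z,w)|^{-\alpha-g}$ in $\beta(z,w)$ up to boundary factors. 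To convert this pointwise decay into an operator-norm bound uniform in $z$, I would re-run the Schur test from the proof of Proposition \ref{BoundedProjections}, now restricted to the complementary region $\{\beta(z,w) > R\}$. Using the $\beta$-isometry of $\phi_z$ and the transformation formulas of Section \ref{Notation} to change variables $w = \phi_z(w')$, both Schur integrals reduce to integrals over $\{w' : \beta(0,w') > R\}$; with an appropriate choice of the Schur test function $h(z,z)^s$ the residual $z$-dependent factor can be absorbed, leaving the tail of a convergent $w'$-integral, which vanishes as $R \to \infty$ by dominated convergence. The same argument with $\alpha$ replaced by $\nu$ gives $P_\nu \in \BDO^p_\nu$.

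All remaining pieces --- the algebraic identities, the composition rule for band widths, norm-closure, and the assembly of $\hat{T_f}$ from $P_\alpha$, $P_\nu$, $M_f$ --- are routine. The only genuinely analytic step is the uniform Schur estimate on the tail of the Bergman kernel, and that is where I expect the main technical work.
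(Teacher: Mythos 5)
Your overall strategy matches the paper's: reduce to showing $P_\alpha,P_\nu\in\BDO^p_\nu$ (using the algebra structure of $\BDO^p_\nu$ and the identity $\hat{T}_f=P_\nu M_fP_\alpha+Q_\alpha$), and prove that via a Schur test on the tail of the Bergman kernel. Your algebraic reduction ($\widehat{AB}=\hat A\hat B$, $\|\hat A-\hat B\|\le\norm{P_\alpha}\norm{A-B}$, etc.) is correct, though the paper packages it through Proposition~\ref{prop0}. Your route to $P_\alpha\in\BDO^p_\nu$ is a genuine variant: you truncate the kernel at $\beta(z,w)\le R$ to produce an explicit band operator $P_\alpha^{(R)}$, whereas the paper proves in Lemma~\ref{lem1} that $\norm{\sum_j M_{a_j}P_\alpha M_{1-b_j}}\to0$ as the separations grow and then invokes the cutoff characterization (Proposition~\ref{BDO_characterization}(v)). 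Your truncation is the more direct statement; the paper's version is the one it reuses elsewhere and is the exact form its characterization lemma requires, but at this point both amount to the same kind of tail bound.

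There is, however, a gap in how you close that tail bound. After the change of variables $w=\phi_z(w')$ the relevant Schur integral takes the form
\[
h(z,z)^{sq}\int_{\Omega\setminus D(0,R)}\abs{h(z,u)}^{-2sq-\alpha-g}\,h(u,u)^{sq+\alpha}\,\mathrm{d}v(u),
\]
and the factor $\abs{h(z,u)}^{-2sq-\alpha-g}$ does \emph{not} drop out: the integrand still depends on $z$. So this is not ``the tail of a convergent $w'$-integral'' in any $z$-independent sense, and dominated convergence only gives the rate for each fixed $z$, not the uniformity in $z$ that the Schur test requires; moreover, choosing $s$ to kill the exponent $2sq+\alpha+g$ is not in general compatible with the admissibility window for $s$. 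The paper closes this with a H\"older split: pick $q_0>1$ close to $1$ so that the Rudin--Forelli estimate (\cite[Theorem~4.1]{FaKo}) gives a $z$-uniform bound on $\int_\Omega\abs{h(z,u)}^{-(2sq+\alpha+g)q_0}h(u,u)^{(sq+\alpha)q_0}\,\mathrm{d}v(u)$, and then the tail is $\le\abs{\Omega\setminus D(0,R)}^{1/p_0}$ times that uniform constant, with $\abs{\Omega\setminus D(0,R)}\to0$. You need this (or an equivalent device) to make the $z$-uniformity explicit; as written the crucial step is asserted rather than proved.
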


Before we proceed with the proof of this theorem, we show some equivalent characterizations of band-dominated operators that will prove useful later on. For this we need some cut-off functions to decompose our domain $\Omega$. Let us state the following auxiliary lemma, which is due to Carlsson and Goldfarb \cite{CaGo} (see \cite[Theorem 91]{BeDra} for a more explicit version). For the unit ball Su\'arez constructed an explicit cover in \cite[Lemma 3.1]{Suarez}.

\begin{lem} \label{lem2}
There is a (smallest possible) positive integer $N$ (depending only on the bounded symmetric domain $\Omega$) such that for any $\sigma > 0$ there is a cover of $\Omega$ by Borel sets $(B_j)_{j \in \N}$ satisfying
\begin{itemize}
	\item[$(i)$] the sets $B_j$ are pairwise disjoint,
	\item[$(ii)$] every point of $\Omega$ belongs to at most $N$ of the sets $\set{z \in \Omega : \dist_{\beta}(z,B_j) \leq \sigma}$,
	\item[$(iii)$] there is a constant $C(\sigma) > 0$ such that $\diam_{\beta}(B_j) \leq C(\sigma)$ for every $j \in \N$.
\end{itemize}
\end{lem}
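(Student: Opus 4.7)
The idea is to construct the cover by greedy disjointification of a maximal separated net in the Bergman metric $\beta$, and then verify the three listed properties by a packing argument exploiting that the automorphism group of $\Omega$ acts transitively by $\beta$-isometries (so Bergman volumes of $\beta$-balls depend only on the radius).

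Fix $\sigma > 0$ and a scale $\delta > 0$ calibrated to $\sigma$. By Zorn's lemma pick a maximal $\delta$-separated countable subset $\set{z_j : j \in \N}$ of $\Omega$ with respect to $\beta$. Maximality forces $\Omega$ to be covered by the Bergman balls
\[V_j := \set{w \in \Omega : \beta(w,z_j) < \delta}.\]
Turn this into a pairwise disjoint Borel partition via greedy enumeration,
\[B_j := V_j \setminus \bigcup_{i < j} V_i,\]
so each $B_j$ is Borel, $B_j \subseteq V_j$, and the $B_j$ still cover $\Omega$. Properties $(i)$ and $(iii)$ (with $C(\sigma) := 2\delta$) are then immediate; empty $B_j$'s can be discarded and the index set relabelled.

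For $(ii)$, the key observation is that if $y \in B_j$ and $\beta(z,y) \leq \sigma$, then $\beta(z,z_j) \leq \sigma + \delta$; hence
\[\set{w \in \Omega : \dist_\beta(w, B_j) \leq \sigma} \subseteq \set{w \in \Omega : \beta(w, z_j) \leq \sigma + \delta}.\]
So the number of $j$ for which a fixed $z \in \Omega$ lies in the left-hand side is at most the number of net points $z_j$ with $\beta(z,z_j) \leq \sigma+\delta$. By $\delta$-separation the smaller balls $\set{w : \beta(w,z_j) < \delta/2}$ are pairwise disjoint and, for those $z_j$ just counted, all contained in $\set{w : \beta(w,z) < \sigma + 3\delta/2}$. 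Since the Bergman volume of a $\beta$-ball depends only on its radius, dividing volumes yields a bound $N$ depending only on $\sigma$, $\delta$, and $\Omega$.

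The main obstacle is the finer claim that $N$ can be taken to depend only on $\Omega$, uniformly in $\sigma$: a straight volume comparison lets $N$ grow with $\sigma$ (Bergman volumes of large $\beta$-balls grow exponentially in the rank-one case, for instance), so a uniform bound requires a Besicovitch-type refinement using the type $(r,a,b)$ of $\Omega$. This is precisely the content of the Carlsson--Goldfarb construction, and is realized concretely for $\Omega = \B^n$ by Su\'arez's explicit partition; for this reason the statement is deferred to the cited sources rather than reproven from scratch.
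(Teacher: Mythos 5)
Your proposal correctly recognizes that this lemma is not proven in the paper: the paper cites Carlsson--Goldfarb \cite{CaGo} (with \cite{BeDra} for an explicit version and \cite[Lemma 3.1]{Suarez} for the explicit construction on $\B^n$), and you defer to the same sources for the crucial uniformity statement. So at bottom you take the same route as the paper: cite rather than reprove.

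What you add is a sketch of the na\"ive net-and-volume approach and a precise diagnosis of where it breaks, and that diagnosis is correct. A maximal $\delta$-separated net with greedy disjointification gives $(i)$ and $(iii)$ easily, and a Bergman-volume packing argument gives some bound on overlap multiplicity --- but since the Bergman metric on $\Omega$ is (rank-one case) a hyperbolic metric, the volume of a $\beta$-ball of radius $R$ grows exponentially in $R$, so the ratio $\mathrm{vol}\,B(\sigma+3\delta/2)/\mathrm{vol}\,B(\delta/2)$ with $\delta$ comparable to $\sigma$ grows without bound as $\sigma\to\infty$. That is exactly why the lemma is nontrivial: the claim that a \emph{single} $N$ works for all $\sigma$ simultaneously is precisely the statement that $(\Omega,\beta)$ has finite asymptotic dimension, and the Carlsson--Goldfarb construction (a hierarchical, dyadic-type decomposition rather than a single-scale net) is what produces it. Your proposal does not reprove that, but it correctly identifies it as the missing ingredient, names the right machinery, and points to the same references as the paper. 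The only thing worth flagging is that, as written, your text reads like a ``proof plan'' and then concedes the core difficulty; for the purposes of the paper this is fine (it matches the paper's treatment), but one should not mistake the net-plus-volume sketch for a proof of $(ii)$ with $\sigma$-independent $N$.
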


For every $t \in (0,1)$ let $(B_{j,t})_{j \in \N}$ be a cover of $\Omega$ that satisfies $(i)$ to $(iii)$ in Lemma \ref{lem2} in the case $\sigma = \frac{1}{t}$ and define
\[\Xi_{j,t,k} := \set{z \in \Omega : \dist_{\beta}(z,B_{j,t}) \leq \frac{k}{3t}}\]
for $j \in \N$, $t \in (0,1)$ and $k = 1,2,3$. We now construct families of uniformly Lipschitz continuous functions (partitions of unity) according to these decompositions. Let $f_{j,t} \from \Omega \to [0,1]$ be defined by
\[f_{j,t}(z) := \frac{\dist_{\beta}(z,\Omega \setminus \Xi_{j,t,1})}{\dist_{\beta}(z,B_{j,t}) + \dist_{\beta}(z,\Omega \setminus \Xi_{j,t,1})}.\]
Clearly, $\supp f_{j,t} = \Xi_{j,t,1}$ and $f_{j,t}(z) = 1$ for $z \in B_{j,t}$. Moreover, it is easy to see that
\[\abs{f_{j,t}(z) - f_{j,t}(w)} \leq \frac{\beta(z,w)}{\dist_{\beta}(B_{j,t},\Omega \setminus \Xi_{j,t,1})} = 3t\beta(z,w).\]
Define $g_t \from \Omega \to \R$ by $g_t(z) := \sum\limits_{j = 1}^{\infty} f_{j,t}(z)$ and $\varphi_{j,t} \from \Omega \to [0,1]$ by $\varphi_{j,t} := \frac{f_{j,t}}{g_t}$. The functions $\varphi_{j,t}$ then satisfy the following properties
\begin{itemize}
	\item[$(i)$] $\sum\limits_{j = 1}^{\infty} \varphi_{j,t}(z) = 1$ for all $z \in \Omega$,
	\item[$(ii)$] $\supp \varphi_{j,t} = \Xi_{j,t,1}$ for all $j \in \N$, $t \in (0,1)$,
	\item[$(iii)$] $\abs{\varphi_{j,t}(z) - \varphi_{j,t}(w)} \leq 6Nt\beta(z,w)$ for all $w,z \in \Omega$, $j \in \N$ and $t \in (0,1)$.
\end{itemize}
Similarly, we can define functions $\psi_{j,t} \from \Omega \to [0,1]$ with the following properties:
\begin{itemize}
	\item[$(i)$] $\psi_{j,t}(z) = 1$ for all $z \in \Xi_{j,t,2}$, $j \in \N$ and $t \in (0,1)$,
	\item[$(ii)$] $\supp \psi_{j,t} = \Xi_{j,t,3}$ for all $j \in \N$ and $t \in (0,1)$,
	\item[$(iii)$] $\abs{\psi_{j,t}(z) - \psi_{j,t}(w)} \leq 3t\beta(z,w)$ for all $w,z \in \Omega$, $j \in \N$ and $t \in (0,1)$.
\end{itemize}
In particular, we have $\varphi_{j,t}\psi_{j,t} = \varphi_{j,t}$ for all $j \in \N$ and $t \in (0,1)$.

\begin{prop} \label{BDO_characterization}
Let $A \in \Lc(L^p_{\nu})$ and $N \in \N$ as in Lemma \ref{lem2}. Moreover, let $\varphi_{j,t}$ and $\psi_{j,t}$ be defined as above. Then the following are equivalent:
\begin{itemize}
	\item[$(i)$] $A$ is band-dominated,
	\item[$(ii)$] $\lim\limits_{t \to 0} \sup\limits_{\norm{f} = 1} \sum\limits_{j = 1}^{\infty} \norm{M_{a_{j,t}}AM_{1-b_{j,t}}f}^p = 0$ for all families of functions $a_{j,t},b_{j,t} \from \Omega \to [0,1]$ that satisfy $\lim\limits_{t \to 0} \inf\limits_{j \in \N} \dist_{\beta}(\supp a_{j,t},\supp(1-b_{j,t})) = \infty$ and for every $t \in (0,1)$ and $z \in \Omega$ the sets $\set{j \in \N : z \in \supp a_{j,t}}$ and $\set{j \in \N : z \in \supp b_{j,t}}$ contain at most $N$ elements,
	\item[$(iii)$] $\lim\limits_{t \to 0} \norm{\sum\limits_{j = 1}^{\infty} M_{a_{j,t}}AM_{1-b_{j,t}}} = 0$ under the same assumptions as in $(ii)$,
	\item[$(iv)$] $\lim\limits_{t \to 0} \sup\limits_{\norm{f} = 1} \sum\limits_{j = 1}^{\infty} \norm{M_{\varphi_{j,t}}AM_{1-\psi_{j,t}}f}^p = 0$,
	\item[$(v)$] $\lim\limits_{t \to 0} \norm{\sum\limits_{j = 1}^{\infty} M_{\varphi_{j,t}}AM_{1-\psi_{j,t}}} = 0$.
\end{itemize}
\end{prop}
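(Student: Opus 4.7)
The plan is to close the cycle of implications $(i) \Rightarrow (ii) \Rightarrow (iii) \Rightarrow (v) \Rightarrow (i)$, obtaining $(iv)$ along the way via the immediate implications $(ii) \Rightarrow (iv) \Rightarrow (v)$. The latter two reduce to the observation that the specific families $(\varphi_{j,t}, \psi_{j,t})$ satisfy the hypotheses of $(ii)$: item $(ii)$ of Lemma~\ref{lem2} gives the $N$-overlap for both $\supp \varphi_{j,t} = \Xi_{j,t,1}$ and $\supp \psi_{j,t} = \Xi_{j,t,3}$, while $\dist_\beta(\Xi_{j,t,1}, \Omega \setminus \Xi_{j,t,2}) \geq \tfrac{1}{3t} \to \infty$ as $t \to 0$ controls $\dist_\beta(\supp \varphi_{j,t}, \supp(1-\psi_{j,t}))$.

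For $(ii) \Rightarrow (iii)$ (and identically $(iv) \Rightarrow (v)$), I apply Proposition~\ref{AuxiliaryProp} pointwise: at each $z \in \Omega$ only $N$ of the values $a_{j,t}(z)$ are nonzero, so with $h_j := A M_{1-b_{j,t}} f$,
\[\Bigl|\sum_j a_{j,t}(z) h_j(z)\Bigr|^p \leq N^p \sum_j a_{j,t}(z)^p |h_j(z)|^p.\]
Integrating against $\mathrm{d}v_\nu$ and then taking the supremum over $\norm{f} = 1$ turns the $\ell^p$-sum bound of $(ii)$ into the operator norm bound of $(iii)$.

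For $(i) \Rightarrow (ii)$, I approximate $A$ in norm by band operators $A_k$ of width $\omega_k$. For $t$ small enough the separation hypothesis forces $M_{a_{j,t}} A_k M_{1-b_{j,t}} = 0$ for every $j$, so only $A - A_k$ contributes to the sum. Writing $M_{1-b_{j,t}} = I - M_{b_{j,t}}$ and exploiting $\sum_j a_{j,t}(z)^p \leq N$ and $\sum_j b_{j,t}(z)^p \leq N$ (from the $N$-overlap together with $a_{j,t},b_{j,t} \in [0,1]$), a short calculation based on Proposition~\ref{AuxiliaryProp2} yields
\[\sum_j \norm{M_{a_{j,t}}(A-A_k)M_{1-b_{j,t}}f}^p \leq 2^{p+1} N \norm{A-A_k}^p \norm{f}^p,\]
which can be made arbitrarily small by first choosing $k$ large and then $t$ small.

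The main obstacle is $(v) \Rightarrow (i)$. Using $\sum_j \varphi_{j,t} = 1$ and $\varphi_{j,t}\psi_{j,t} = \varphi_{j,t}$, I would split
\[A = \sum_j M_{\varphi_{j,t}} A M_{\psi_{j,t}} + \sum_j M_{\varphi_{j,t}} A M_{1-\psi_{j,t}},\]
where the second summand vanishes in norm as $t \to 0$ by $(v)$. The crucial claim is that for each fixed $t$ the first summand is a band operator of finite width: both $\supp \varphi_{j,t}$ and $\supp \psi_{j,t}$ lie inside $\Xi_{j,t,3}$, whose $\beta$-diameter is bounded uniformly in $j$ by Lemma~\ref{lem2}$(iii)$, so $M_f\bigl(\sum_j M_{\varphi_{j,t}} A M_{\psi_{j,t}}\bigr) M_g = 0$ as soon as $\dist_\beta(\supp f, \supp g)$ exceeds this uniform diameter. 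Hence $A$ is a norm limit of band operators, i.e.\ $A \in \BDO^p_\nu$. The fine points to verify are that the infinite sum defining the band operator is well-defined (at each $z$ only $N$ terms are nonzero, and the partial sums converge in $L^p$ by dominated convergence dominated by $|Af|$) and to track the uniform diameter bound coming from Lemma~\ref{lem2}$(iii)$.
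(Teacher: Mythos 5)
Your proposal follows essentially the same route as the paper: approximate by band operators to prove $(i)\Rightarrow(ii)$, use the pointwise $N$-overlap bound (Proposition~\ref{AuxiliaryProp}) for $(ii)\Rightarrow(iii)$, specialize to $\varphi_{j,t},\psi_{j,t}$ for $(ii)\Rightarrow(iv)$ and $(iii)\Rightarrow(v)$, and construct $A_n:=\sum_j M_{\varphi_{j,t}}AM_{\psi_{j,t}}$ as a band operator for $(v)\Rightarrow(i)$, with all the same estimates. One small technical slip: the strong convergence of the series $\sum_j M_{\varphi_{j,t}}AM_{\psi_{j,t}}$ is not a dominated-convergence argument with dominant $\abs{Af}$ (the $j$-th term involves $A(\psi_{j,t}f)$, not $Af$); the paper handles it via Lemma~\ref{lem0.5}, which is exactly the $N$-overlap observation you already invoke, so the gap is easily repaired.
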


\begin{proof}
Let $A \in \BDO^p_{\nu}$. Then there is a sequence $(A_n)_{n \in \N}$ of band operators such that $A_n \to A$. Let $\epsilon > 0$ and choose $n$ sufficiently large such that $\norm{A-A_n} < \epsilon$. Now choose $t$ sufficiently small such that $\inf\limits_{j \in \N} \dist_{\beta}(\supp a_{j,t},\supp(1-b_{j,t}))$ is larger that the band-width of $A_n$. This implies that $M_{a_{j,t}}A_nM_{1-b_{j,t}} = 0$ for all $j \in \N$. Therefore
\begin{align*}
\sum\limits_{j = 1}^{\infty} \norm{M_{a_{j,t}}AM_{1-b_{j,t}}f}^p &=
\sum\limits_{j = 1}^{\infty} \norm{M_{a_{j,t}}(A-A_n)M_{1-b_{j,t}}f}^p\\
&\leq 2^p\sum\limits_{j = 1}^{\infty} \left(\norm{M_{a_{j,t}}(A-A_n)f}^p + \norm{M_{a_{j,t}}(A-A_n)M_{b_{j,t}}f}^p\right)\\
&\leq 2^{p+1}N\epsilon^p\norm{f}^p
\end{align*}
for all $f \in L^p_{\nu}$ and sufficiently small $t$ by Proposition \ref{AuxiliaryProp2}. As $\epsilon$ was arbitrary, $(ii)$ follows.

Now assume
\[\lim\limits_{t \to 0} \sup\limits_{\norm{f} = 1} \sum\limits_{j = 1}^{\infty} \norm{M_{a_{j,t}}AM_{1-b_{j,t}}f}^p = 0.\] 
Then
\begin{align*}
\norm{\sum\limits_{j = 1}^{\infty} M_{a_{j,t}}AM_{1-b_{j,t}}f}^p &= \int_{\Omega} \abs{\sum\limits_{j = 1}^{\infty} M_{a_{j,t}}AM_{1-b_{j,t}}f}^p \, \mathrm{d}v_{\nu}\\
&\leq \int_{\Omega} N^p\sum\limits_{j = 1}^{\infty} \abs{M_{a_{j,t}}AM_{1-b_{j,t}}f}^p \, \mathrm{d}v_{\nu}\\
&= N^p\sum\limits_{j = 1}^{\infty}\int_{\Omega} \abs{M_{a_{j,t}}AM_{1-b_{j,t}}f}^p \, \mathrm{d}v_{\nu}\\
&= N^p\sum\limits_{j = 1}^{\infty} \norm{M_{a_{j,t}}AM_{1-b_{j,t}}f}^p
\end{align*}
for all $f \in L^p_{\nu}$ and $t \in (0,1)$ by Proposition \ref{AuxiliaryProp} (for every $z \in \Omega$ the sum over $j$ in the first line contains at most $N$ non-zero terms). It follows
\[\lim\limits_{t \to 0} \norm{\sum\limits_{j = 1}^{\infty} M_{a_{j,t}}AM_{1-b_{j,t}}} = 0.\]
Similarly, $(iv)$ implies $(v)$.

That $(ii)$ implies $(iv)$ and $(iii)$ implies $(v)$ is clear since $\dist_{\beta}(\supp \varphi_{j,t},\supp(1 - \psi_{j,t})) \geq \frac{1}{3t}$ and every $z \in \Omega$ belongs to at most $N$ of the sets $\supp \varphi_{j,t}$ and $\supp \psi_{j,t}$ by construction (see Lemma \ref{lem2}).

We are thus left with the assertion that $(v)$ implies $(i)$. Define
\[A_n := \sum\limits_{j = 1}^{\infty} M_{\varphi_{j,\frac{1}{n}}}AM_{\psi_{j,\frac{1}{n}}}.\]
It is easily seen that this defines a bounded linear operator (see also Lemma \ref{lem0.5} below). Moreover, $A_n$ is obviously a band operator of band width at most $C(n) + 2n$, where $C(n)$ is the constant from Lemma \ref{lem2} $(iii)$. As $\sum\limits_{j = 1}^{\infty} M_{\varphi_{j,t}} = I$ for all $t \in (0,1)$, we obtain
\[\norm{A-A_n} = \norm{\sum\limits_{j = 1}^{\infty} (M_{\varphi_{j,\frac{1}{n}}}A - M_{\varphi_{j,\frac{1}{n}}}AM_{\psi_{j,\frac{1}{n}}})} = \norm{\sum\limits_{j = 1}^{\infty} M_{\varphi_{j,\frac{1}{n}}}AM_{1-\psi_{j,\frac{1}{n}}}}\]
and this tends to $0$ as $n \to \infty$ by assumption.
\end{proof}

\begin{cor} \label{BDO_characterization_cor}
Let $A \in \BDO^p_{\nu}$ and let $a_{j,t},b_{j,t} \from \Omega \to [0,1]$ satisfy
\[\lim\limits_{t \to 0} \inf\limits_{j \in \N} \dist_{\beta}(\supp a_{j,t},\supp(1-b_{j,t})) = \infty\]
and for every $t \in (0,1)$ and $z \in \Omega$ the sets $\set{j \in \N : z \in \supp a_{j,t}}$ and $\set{j \in \N : z \in \supp b_{j,t}}$ contain at most $N$ elements. Then
\[\lim\limits_{t \to 0} \sup\limits_{j \in \N} \norm{M_{a_{j,t}}AM_{1-b_{j,t}}} = 0 = \lim\limits_{t \to 0} \sup\limits_{j \in \N} \norm{M_{1-b_{j,t}}AM_{a_{j,t}}}.\]
\end{cor}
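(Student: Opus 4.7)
The corollary is an essentially immediate consequence of the definition of band-dominated operators together with the distance assumption. The idea is that both equalities can be obtained by a direct approximation argument, bypassing the sum-over-$j$ estimates of Proposition \ref{BDO_characterization} (the $N$-fold overlap hypotheses, although assumed, are not actually needed for the supremum version).

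Fix $\epsilon > 0$. Since $A \in \BDO^p_\nu$, choose a band operator $A_n$ with $\norm{A - A_n} < \epsilon$, and let $\omega_n$ denote its band width. By hypothesis,
\[\inf\limits_{j \in \N} \dist_{\beta}(\supp a_{j,t}, \supp(1 - b_{j,t})) \to \infty \quad \text{as } t \to 0,\]
so for all $t$ sufficiently small we have $\dist_{\beta}(\supp a_{j,t}, \supp(1 - b_{j,t})) > \omega_n$ uniformly in $j$. By the definition of band width, this forces
\[M_{a_{j,t}} A_n M_{1 - b_{j,t}} = 0 \quad \text{and} \quad M_{1 - b_{j,t}} A_n M_{a_{j,t}} = 0\]
for every $j \in \N$ (the distance is symmetric, so both vanish simultaneously).

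Since $0 \leq a_{j,t}, 1 - b_{j,t} \leq 1$ pointwise, the multiplication operators $M_{a_{j,t}}$ and $M_{1 - b_{j,t}}$ have norm at most $1$ on $L^p_\nu$. Consequently, for all such $t$,
\[\sup_{j \in \N} \norm{M_{a_{j,t}} A M_{1 - b_{j,t}}} = \sup_{j \in \N} \norm{M_{a_{j,t}} (A - A_n) M_{1 - b_{j,t}}} \leq \norm{A - A_n} < \epsilon,\]
and the analogous estimate holds for $M_{1 - b_{j,t}} A M_{a_{j,t}}$. Since $\epsilon > 0$ was arbitrary, both limits are zero. There is no genuine obstacle here; the main point to verify is that the two side-multiplications with bounded scalar symbols leave the operator norm under control, which is immediate from the triangle inequality applied to $A = A_n + (A - A_n)$ and the trivial bound $\norm{M_h} \leq \norm{h}_\infty$.
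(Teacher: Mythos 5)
Your proof is correct, and it takes a genuinely different route from the paper. The paper obtains the first limit by observing that $\norm{M_{a_{j,t}}AM_{1-b_{j,t}}}^p$ is dominated by the sum $\sup_{\norm{f}=1}\sum_j\norm{M_{a_{j,t}}AM_{1-b_{j,t}}f}^p$ and then invoking Proposition~\ref{BDO_characterization}\,$(ii)$, and handles the second limit either by rerunning that argument with the order of the multiplication operators reversed or by passing to the adjoint $A^*\in\BDO^q_\nu$. You instead go back directly to Definition~\ref{defn_BDO}: approximate $A$ by a band operator $A_n$ of band width $\omega_n$, note that once $\dist_\beta(\supp a_{j,t},\supp(1-b_{j,t}))>\omega_n$ uniformly in $j$ both $M_{a_{j,t}}A_nM_{1-b_{j,t}}$ and $M_{1-b_{j,t}}A_nM_{a_{j,t}}$ vanish (the distance condition is symmetric), and then use $\norm{M_{a_{j,t}}},\norm{M_{1-b_{j,t}}}\le1$ to bound both suprema by $\norm{A-A_n}$. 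This is a self-contained argument that re-uses only the core idea from the first step of the proof of Proposition~\ref{BDO_characterization}; what it buys is twofold: both limits are handled in a single stroke rather than by an appeal to duality or a re-derivation, and — as you correctly point out — the $N$-fold overlap hypotheses, while part of the statement, are not actually used for the supremum bound (they are only needed for the stronger $\ell^p$-summed estimates in Proposition~\ref{BDO_characterization}). The paper's deduction is shorter on the page only because it leans on the already-proven proposition.
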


\begin{proof}
As
\[\norm{M_{a_{j,t}}AM_{1-b_{j,t}}}^p = \sup\limits_{\norm{f} = 1} \norm{M_{a_{j,t}}AM_{1-b_{j,t}}f}^p \leq \sup\limits_{\norm{f} = 1} \sum\limits_{j = 1}^{\infty} \norm{M_{a_{j,t}}AM_{1-b_{j,t}}f}^p\]
the first limit follows directly from Proposition \ref{BDO_characterization}. To show the second limit we may either repeat the first part of the proof of Proposition \ref{BDO_characterization} with the reversed ordering or just observe that $A \in \BDO^p_{\nu}$ is equivalent to $A^* \in \BDO^q_{\nu}$ for $\frac{1}{p} + \frac{1}{q} = 1$.
\end{proof}

The following characterization will also prove itself useful. The proof is quite similar to the proof of Theorem 2.1.6 in \cite{RaRoSi2}. We use the standard notation $[A,B] := AB-BA$ for the commutator of two operators $A$ and $B$.

\begin{prop} \label{BDO_characterization2}
Let $A \in \Lc(L^p_{\nu})$. Then $A$ is band-dominated if and only if
\begin{equation} \label{eq_BDO_characterization2}
\lim\limits_{t \to 0} \sup\limits_{\norm{f} = 1} \sum\limits_{j = 1}^{\infty} \norm{[A,M_{\varphi_{j,t}}]f}^p = 0.
\end{equation}
Moreover, $\lim\limits_{t \to 0} \sup\limits_{j \in \N} \norm{[A,M_{\varphi_{j,t}}]} = 0$ in this case.
\end{prop}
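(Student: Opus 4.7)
The plan is to prove the two directions separately, built around the algebraic identity
\[M_{\varphi_{j,t}}AM_{1-\psi_{j,t}} = -[A,M_{\varphi_{j,t}}]\,M_{1-\psi_{j,t}},\]
valid because $\varphi_{j,t}(1-\psi_{j,t})=0$, together with the trivial bound $\sup_{j\in\N}\|[A,M_{\varphi_{j,t}}]\|^p \le \sup_{\|g\|=1}\sum_{j=1}^{\infty}\|[A,M_{\varphi_{j,t}}]g\|^p$ obtained by extracting a single summand from a sum of nonnegative reals. The latter automatically upgrades either equivalent condition to the ``moreover'' statement, so no separate argument is needed there.

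For the implication \eqref{eq_BDO_characterization2} $\Rightarrow$ $A\in\BDO^p_\nu$, I would verify condition $(iv)$ of Proposition \ref{BDO_characterization}. Writing $M_{1-\psi_{j,t}}f = f - M_{\psi_{j,t}}f$ in the identity above and applying $(a+b)^p\le 2^p(a^p+b^p)$ gives
\[\sum_j\|M_{\varphi_{j,t}}AM_{1-\psi_{j,t}}f\|^p \le 2^p\sum_j\|[A,M_{\varphi_{j,t}}]f\|^p + 2^p\sum_j\|[A,M_{\varphi_{j,t}}]M_{\psi_{j,t}}f\|^p.\]
The first sum vanishes as $t\to 0$, uniformly for $\|f\|=1$, by hypothesis. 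For the second, Proposition \ref{AuxiliaryProp2} applied to the bounded-overlap cover $\{\supp\psi_{j,t}\}_j=\{\Xi_{j,t,3}\}_j$ yields $\sum_j\|M_{\psi_{j,t}}f\|^p\le N\|f\|^p$, so it is bounded by $N\,\bigl(\sup_j\|[A,M_{\varphi_{j,t}}]\|\bigr)^p\|f\|^p$, which also vanishes by the trivial bound recorded above.

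For the converse, fix $\epsilon>0$ and approximate $A$ in norm by a band operator $B := \sum_l M_{\varphi_{l,1/n}}AM_{\psi_{l,1/n}}$ as in the last step of the proof of Proposition \ref{BDO_characterization}, with $\|A-B\|<\epsilon$. Decompose $[A,M_{\varphi_{j,t}}] = [A-B,M_{\varphi_{j,t}}]+[B,M_{\varphi_{j,t}}]$. The first piece expands as $(A-B)M_{\varphi_{j,t}} - M_{\varphi_{j,t}}(A-B)$, and two applications of Proposition \ref{AuxiliaryProp2} yield $\sum_j\|[A-B,M_{\varphi_{j,t}}]f\|^p \le 2^{p+1}N\epsilon^p\|f\|^p$. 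For the second piece, choose a base point $z_l \in B_{l,1/n}$, set $c_l := \varphi_{j,t}(z_l)$, and rewrite each $l$-summand as
\[M_{\varphi_{l,1/n}}AM_{\psi_{l,1/n}(\varphi_{j,t}-c_l)} - M_{\varphi_{l,1/n}(\varphi_{j,t}-c_l)}AM_{\psi_{l,1/n}}.\]
On $\supp\psi_{l,1/n}=\Xi_{l,1/n,3}$ one has $\beta(z,z_l)\le n+C(n)$, hence $|\varphi_{j,t}(z)-c_l|\le 6Nt(n+C(n))$, and the same estimate is valid on $\supp\varphi_{l,1/n}$. A double application of Proposition \ref{AuxiliaryProp2} to the bounded-overlap covers $\{\supp\varphi_{l,1/n}\}_l$ and $\{\supp\psi_{l,1/n}\}_l$ collapses the $l$-sum and produces $\sum_j\|[B,M_{\varphi_{j,t}}]f\|^p \le C(n,p,N)\,t^p\|A\|^p\|f\|^p$. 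Sending $t\to 0$ with $n$ fixed, and then $\epsilon\to 0$, proves \eqref{eq_BDO_characterization2}.

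The main obstacle is the converse direction: although each $l$-summand of $[B,M_{\varphi_{j,t}}]$ shrinks at rate $O(t)$ by the Lipschitz estimate, one must simultaneously collapse \emph{two} bounded-overlap sums (one over $l$ to recover locality of $B$, and the one over $j$ inherent in the statement) without losing the factor $t^p$. The ``if'' direction, in contrast, is purely algebraic once the identity $M_{\varphi_{j,t}}AM_{1-\psi_{j,t}} = -[A,M_{\varphi_{j,t}}]M_{1-\psi_{j,t}}$ is written down.
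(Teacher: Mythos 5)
Your proof of $\eqref{eq_BDO_characterization2}\Rightarrow A\in\BDO^p_\nu$, and the derivation of the ``moreover'' statement, match the paper's argument. For the converse you take a genuinely different route: the paper first proves the uniform bound $\lim_{t\to 0}\sup_j\|[A,M_{\varphi_{j,t}}]\|=0$ via the separate Lemma~\ref{lem3} (a step-function quantization argument valid for any band operator of bounded band width), and then recovers the $j$-summed estimate from the split $[A,M_{\varphi_{j,t}}]=[A,M_{\varphi_{j,t}}]M_{\psi_{j,t}}+M_{\varphi_{j,t}}AM_{1-\psi_{j,t}}$ combined with the bounded overlap of $\{\supp\psi_{j,t}\}_j$ and Proposition~\ref{BDO_characterization}. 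You bypass Lemma~\ref{lem3} entirely and instead expand the explicit band approximant $B=\sum_l M_{\varphi_{l,1/n}}AM_{\psi_{l,1/n}}$, exploiting only the Lipschitz property of $\varphi_{j,t}$; this is arguably the more elementary route.

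However, your estimate of the $B$-piece has a genuine gap. Your ``double application of Proposition~\ref{AuxiliaryProp2}'' refers to the two covers $\{\supp\varphi_{l,1/n}\}_l$ and $\{\supp\psi_{l,1/n}\}_l$, both indexed by $l$. Collapsing the $l$-sum in $T_1^j:=\sum_l M_{\varphi_{l,1/n}}AM_{g_{j,l}}f$, with $g_{j,l}:=\psi_{l,1/n}(\varphi_{j,t}-c_l^j)$, this way gives $\|T_1^j\|^p\le N^{p+1}\|A\|^p\bigl(6Nt(n+C(n))\bigr)^p\|f\|^p$, a bound that is $O(t^p)$ but \emph{uniform in $j$}; summing it over infinitely many $j$ diverges, so the conclusion $\sum_j\|[B,M_{\varphi_{j,t}}]f\|^p\le C\,t^p\|A\|^p\|f\|^p$ does not follow from what you have written. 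You explicitly flag ``two bounded-overlap sums'' as the obstacle, but both covers you name are in $l$; the missing ingredient is overlap in $j$. The fix: for fixed $z$ and fixed $l$ with $z\in\supp\psi_{l,1/n}$, the quantity $\varphi_{j,t}(z)-c_l^j$ is nonzero only if $z\in\supp\varphi_{j,t}$ or $z_l\in\supp\varphi_{j,t}$, i.e.~for at most $2N$ values of $j$; hence the doubly indexed family $\{\supp g_{j,l}\}_{(j,l)\in\N^2}$ has overlap at most $2N^2$, and a single application of Proposition~\ref{AuxiliaryProp2} to that family yields $\sum_{j,l}\|M_{g_{j,l}}f\|^p\le 2N^2\bigl(6Nt(n+C(n))\bigr)^p\|f\|^p$, after which your argument goes through (and analogously for the second summand $T_2^j$). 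With this observation inserted, your alternative route to the converse is correct.
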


We divide the proof in two parts. In the first part we deal with band operators only and show a little bit more than we need here. This will come in handy later on.

\begin{lem} \label{lem3}
Let $\omega > 0$ and let $a_{j,t} \from \Omega \to [0,1]$ be measurable functions for $j \in \N$ and $t \in (0,1)$. If $\lim\limits_{t \to 0} \inf\limits_{j \in \N} \dist_{\beta}(a_{j,t}^{-1}(U),a_{j,t}^{-1}(V)) \to \infty$ for all sets $U,V \subset [0,1]$ with $\dist(U,V) > 0$, then for every $\epsilon > 0$ there exists a $t_0 > 0$ such that for all $t < t_0$ and all band operators of band width at most $\omega$ the estimate
\[\sup\limits_{j \in \N} \norm{[A,M_{a_{j,t}}]} \leq 3\norm{A}\epsilon\]
holds.
\end{lem}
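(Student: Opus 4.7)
The plan is to reduce the estimate to a simple-function approximation of $a_{j,t}$ and exploit the band structure of $A$ at the level of level sets.

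First, I would fix a partition $0 = t_0 < t_1 < \cdots < t_K = 1$ of $[0,1]$ with $t_k - t_{k-1} \leq \varepsilon$, set $I_k := [t_{k-1},t_k)$ (with the final interval closed), and let $E_k^{j,t} := a_{j,t}^{-1}(I_k)$. For any $k,\ell$ with $|k-\ell| \geq 2$ the intervals $I_k$ and $I_\ell$ are separated in $[0,1]$ by a positive distance, so the standing hypothesis gives $\dist_\beta(E_k^{j,t}, E_\ell^{j,t}) \to \infty$ as $t \to 0$, uniformly in $j$. Choosing $t_0$ small enough, this distance exceeds $\omega$ for all $t < t_0$, all $j$, and all such pairs $(k,\ell)$ simultaneously; hence for any band operator $A$ of band width at most $\omega$ one has $M_{\chi_{E_k^{j,t}}} A M_{\chi_{E_\ell^{j,t}}} = 0$ whenever $|k-\ell| \geq 2$.

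Next, I would approximate $a_{j,t}$ by the simple function $\tilde a_{j,t} := \sum_k c_k\, \chi_{E_k^{j,t}}$ with $c_k := \tfrac{1}{2}(t_{k-1}+t_k)$, so that $\|a_{j,t} - \tilde a_{j,t}\|_\infty \leq \varepsilon/2$ and $|c_k - c_{k\pm 1}| \leq \varepsilon$. The trivial estimate $\|[A,M_f]\| \leq 2\|A\|\|f\|_\infty$ immediately yields $\|[A,M_{a_{j,t}-\tilde a_{j,t}}]\| \leq \|A\|\varepsilon$, so it remains to control $[A,M_{\tilde a_{j,t}}]$.

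Using $\sum_k \chi_{E_k^{j,t}} = 1$ together with the vanishing of $M_{\chi_{E_k^{j,t}}} A M_{\chi_{E_\ell^{j,t}}}$ for $|k-\ell| \geq 2$, I would expand
\[
[A, M_{\tilde a_{j,t}}] = \sum_k c_k\, [A, M_{\chi_{E_k^{j,t}}}]
\]
and reindex the four double sums that arise. The diagonal contributions $M_{\chi_{E_k^{j,t}}} A M_{\chi_{E_k^{j,t}}}$ cancel, leaving a telescoping expression that collects only nearest-neighbour terms $M_{\chi_{E_k^{j,t}}} A M_{\chi_{E_{k\pm 1}^{j,t}}}$ with coefficients $c_k - c_{k\pm 1}$ of modulus at most $\varepsilon$. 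Because the sets $E_k^{j,t}$ are pairwise disjoint, applying $\sum_k M_{\chi_{E_k^{j,t}}} A M_{\chi_{E_{k-1}^{j,t}}}$ (and likewise the other sum) to $f \in L^p_\nu$ and integrating in the style of Proposition \ref{AuxiliaryProp2} shows that each of the two sums has norm at most $\|A\|$; multiplied by the coefficient bound $\varepsilon$ this gives $\|[A,M_{\tilde a_{j,t}}]\| \leq 2\|A\|\varepsilon$ and thus the claimed $3\|A\|\varepsilon$ bound, uniformly in $j$ and in the choice of $A$.

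The main subtlety is matching the single-scale parameter $t_0$ to both the uniform-in-$j$ separation hypothesis and the common band width $\omega$; this is exactly what lets me choose a $j$-independent partition of $[0,1]$ and still conclude simultaneously for all $j \in \N$ and all band operators of band width $\leq \omega$. The actual reindexing and Schur-type norm estimate for the neighbour sums are routine given the disjointness of the $E_k^{j,t}$.
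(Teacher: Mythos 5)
Your argument is correct and reaches the same constant $3\|A\|\epsilon$, but it travels a genuinely different route than the paper. Where you discretize the range $[0,1]$ into \emph{disjoint} intervals $I_k$ and form the corresponding level sets $E_k^{j,t}=a_{j,t}^{-1}(I_k)$, the paper instead works with two families of \emph{nested} super-level sets
$U_{k,t}^j=\{a_{j,t}\ge k\epsilon\}$ and $V_{k,t}^j=\{a_{j,t}\ge (k-\tfrac12)\epsilon\}$, approximating $a_{j,t}$ by the two slightly offset staircase functions $a^U_{j,t}=\epsilon\sum_k\chi_{U_{k,t}^j}$ and $a^V_{j,t}=\epsilon\sum_k\chi_{V_{k,t}^j}$, each within $\epsilon$ of $a_{j,t}$. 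The paper then splits $AM_{a_{j,t}}-M_{a_{j,t}}A$ as $A(M_{a_{j,t}}-M_{a^V})+\bigl(AM_{a^V}-M_{a^U}A\bigr)+(M_{a^U}-M_{a_{j,t}})A$; the outer two terms cost $2\|A\|\epsilon$, and the middle term is decomposed so that one piece dies from the band width (the $\epsilon/2$ offset between $U_k$ and $\Omega\setminus V_k$ guarantees separation of the relevant supports) while the surviving piece is a sum over the \emph{differences} $U_{k-1}\setminus U_k$ and $V_k\setminus V_{k+1}$, which are pairwise disjoint and give norm $\le\|A\|\epsilon$. Your version lands the same $3\|A\|\epsilon$ with the split $1+2$: approximation cost $\|A\|\epsilon$ (since $\|a-\tilde a\|_\infty\le\epsilon/2$), plus two nearest-neighbour sums each bounded by $\|A\|\epsilon$ via disjointness of the $E_k$'s. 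The main payoff of your approach is transparency: disjointness of the $E_k^{j,t}$ is automatic from the start, the double-sum expansion $[A,M_{\tilde a}]=\sum_{k,\ell}(c_k-c_\ell)M_{\chi_{E_\ell}}AM_{\chi_{E_k}}$ immediately kills the diagonal and (for small $t$) all non-adjacent terms by the band width, and the Schur-type norm bound on the two remaining bidiagonal sums is a one-liner. The paper's nested construction is marginally more work to unwind, but has the feature that it never needs a separate telescoping re-indexing step; it trades that for the small offset trick between $U_k$ and $V_k$. One small point worth making explicit in a clean write-up: you invoke the separation hypothesis for finitely many pairs of range-intervals $(I_k,I_\ell)$ with $|k-\ell|\ge2$, so the common threshold $t_0$ is obtained simply by taking a minimum over that finite family; you note this in passing, and it is indeed harmless.
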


\begin{proof}
Let $A \in \Lc(L^p_{\nu})$ be a band operator of band width at most $\omega$, fix $\epsilon > 0$ and set $m := \left\lceil\frac{1}{\epsilon}\right\rceil$. For $k = 1, \ldots, m$ we define the following sets:
\[U_{k,t}^j := \set{z \in \Omega : a_{j,t}(z) \geq k\epsilon} \quad \text{and} \quad V_{k,t}^j := \set{z \in \Omega : a_{j,t}(z) \geq \left(k - \frac{1}{2}\right)\epsilon}.\]
Moreover, we set
\[a_{j,t}^U := \epsilon\sum\limits_{k = 1}^{m} \chi_{U_{k,t}^j} \quad \text{and} \quad a_{j,t}^V := \epsilon\sum\limits_{k = 1}^{m} \chi_{V_{k,t}^j}.\]
Clearly, for every $z \in \Omega$, $t \in (0,1)$ and $j \in \N$ either $a_{j,t}(z) < \epsilon$ or $a_{j,t}(z) \in [l\epsilon, (l+1)\epsilon)$ for some $l \in \set{1, \ldots, m}$. In either case this implies $\abs{a_{j,t}(z) - a_{j,t}^U(z)} < \epsilon$ and hence $\sup\limits_{j \in \N}\norm{a_{j,t} - a_{j,t}^U}_{\infty} < \epsilon$. Similarly, we obtain $\sup\limits_{j \in \N}\norm{a_{j,t} - a_{j,t}^V}_{\infty} < \epsilon$. This of course implies
\[\sup\limits_{j \in \N} \norm{M_{a_{j,t}} - M_{a_{j,t}^U}} < \epsilon \quad \text {and} \quad \sup\limits_{j \in \N} \norm{M_{a_{j,t}} - M_{a_{j,t}^V}} < \epsilon.\]
It follows
\begin{align} \label{ineq1}
\sup\limits_{j \in \N} \norm{AM_{a_{j,t}} - M_{a_{j,t}}A} &\leq \sup\limits_{j \in \N} \left(\norm{A(M_{a_{j,t}} - M_{a_{j,t}^V})} + \norm{(AM_{a_{j,t}^V} - M_{a_{j,t}^U}A)} + \norm{(M_{a_{j,t}^U} - M_{a_{j,t}})A}\right)\notag\\
&\leq 2\norm{A}\epsilon + \sup\limits_{j \in \N} \norm{(AM_{a_{j,t}^V} - M_{a_{j,t}^U}A)}\notag\\
&= 2\norm{A}\epsilon + \epsilon\sup\limits_{j \in \N} \norm{\sum\limits_{k = 1}^m (AM_{\chi_{V_{k,t}^j}} - M_{\chi_{U_{k,t}^j}}A)}\notag\\
&\leq 2\norm{A}\epsilon + \epsilon\sup\limits_{j \in \N} \norm{\sum\limits_{k = 1}^m M_{\chi_{\Omega \setminus U_{k,t}^j}}AM_{\chi_{V_{k,t}^j}}} + \epsilon\sup\limits_{j \in \N} \norm{\sum\limits_{k = 1}^m M_{\chi_{U_{k,t}^j}}AM_{\chi_{\Omega \setminus V_{k,t}^j}}}.
\end{align}
Since $U_{k,t}^j = a_{j,t}^{-1}([k\epsilon,1])$ and $\Omega \setminus V_{k,t}^j = a_{j,t}^{-1}([0,\left(k-\frac{1}{2}\right)\epsilon))$, we obtain
\[\inf\limits_{j \in \N}\dist_{\beta}(U_{k,t}^j,\Omega \setminus V_{k,t}^j) \to \infty\]
for all $k$ as $t \to 0$ by assumption. Choose $t$ sufficiently small such that $\dist_{\beta}(U_{k,t}^j,\Omega \setminus V_{k,t}^j) > \omega$ for all $j \in \N$. As $A$ is a band operator of band width at most $\omega$, the third term in \eqref{ineq1} vanishes.

Similarly, setting $U_{0,t}^j := \Omega$ and $V_{m+1,t}^j := \emptyset$, we get
\begin{align*}
\sup\limits_{j \in \N} \norm{\sum\limits_{k = 1}^m M_{\chi_{\Omega \setminus U_{k,t}^j}}AM_{\chi_{V_{k,t}^j}}} &\leq \sup\limits_{j \in \N}\norm{\sum\limits_{k = 1}^m M_{\chi_{\Omega \setminus U_{k,t}^j}}AM_{\chi_{V_{k+1,t}^j}}} + \sup\limits_{j \in \N} \norm{\sum\limits_{k = 1}^m M_{\chi_{\Omega \setminus U_{k-1,t}^j}}AM_{\chi_{V_{k,t}^j \setminus V_{k+1,t}^j}}}\\
&\quad + \sup\limits_{j \in \N}\norm{\sum\limits_{k = 1}^m M_{\chi_{U_{k-1,t}^j \setminus U_{k,t}^j}}AM_{\chi_{V_{k,t}^j \setminus V_{k+1,t}^j}}}\\
&= \sup\limits_{j \in \N} \norm{\sum\limits_{k = 1}^m M_{\chi_{U_{k-1,t}^j \setminus U_{k,t}^j}}AM_{\chi_{V_{k,t}^j \setminus V_{k+1,t}^j}}}.
\end{align*}
As the sets $U_{k-1,t}^j \setminus U_{k,t}^j$ are pairwise disjoint for $k = 1, \ldots, m$, this can be further estimated as
\begin{align*}
\sup\limits_{j \in \N} \norm{\sum\limits_{k = 1}^m M_{\chi_{U_{k-1,t}^j \setminus U_{k,t}^j}}AM_{\chi_{V_{k,t}^j \setminus V_{k+1,t}^j}}f}^p &= \sup\limits_{j \in \N} \sum\limits_{k = 1}^m \norm{M_{\chi_{U_{k-1,t}^j \setminus U_{k,t}^j}}AM_{\chi_{V_{k,t}^j \setminus V_{k+1,t}^j}}f}^p\\
&\leq \norm{A}^p\sup\limits_{j \in \N} \sum\limits_{k = 1}^m \norm{M_{\chi_{V_{k,t}^j \setminus V_{k+1,t}^j}}f}^p\\
&= \norm{A}^p\sup\limits_{j \in \N} \norm{M_{\chi_{V_{1,t}^j}}f}^p\\
&\leq \norm{A}^p\norm{f}^p
\end{align*}
for all $f \in L^p_{\nu}$. Thus
\[\sup\limits_{j \in \N} \norm{(AM_{a_{j,t}} - M_{a_{j,t}}A)} \leq 2\norm{A}\epsilon + \norm{A}\epsilon = 3\norm{A}\epsilon\qedhere.\]
\end{proof}

Now we can prove Proposition \ref{BDO_characterization2}.

\begin{proof}[Proof of Proposition \ref{BDO_characterization2}]
Let $A \in \BDO^p_{\nu}$ and fix $\epsilon > 0$. Then there is a sequence of band operators $(A_n)_{n \in \N}$ such that $A_n \to A$. Choose $n$ sufficiently large such that $\norm{A-A_n} < \epsilon$. Now observe that the functions $\varphi_{j,t}$ satisfy the assumption in Lemma \ref{lem3}. Indeed, let $U,V \subset [0,1]$ with $\dist(U,V) > 0$ and $w_{j,t} \in \varphi_{j,t}^{-1}(U)$, $z_{j,t} \in \varphi_{j,t}^{-1}(V)$. Then
\[\beta(z_{j,t},w_{j,t}) \geq \frac{1}{6Nt}\abs{\varphi_{j,t}(z_{j,t}) - \varphi_{j,t}(w_{j,t})} \geq \frac{1}{6Nt}\dist(U,V) \to \infty\]
as $t \to 0$. Thus there is a $t > 0$ such that
\[\sup\limits_{j \in \N} \norm{[A,M_{\varphi_{j,t}}]} \leq \sup\limits_{j \in \N} \norm{[A_n,M_{\varphi_{j,t}}]} + \sup\limits_{j \in \N} \norm{[A-A_n,M_{\varphi_{j,t}}]} \leq 3\norm{A_n}\epsilon + 2\epsilon \leq 3(\norm{A}+\epsilon)\epsilon + 2\epsilon\]
by Lemma \ref{lem3}. As $\epsilon$ was arbitrary, this implies
\[\lim\limits_{t \to 0} \sup\limits_{j \in \N} \norm{[A,M_{\varphi_{j,t}}]} = 0.\]
Using
\begin{align*}
\sup\limits_{\norm{f} = 1} \sum\limits_{j = 1}^{\infty} \norm{[A,M_{\varphi_{j,t}}]f}^p &\leq 2^p\sup\limits_{\norm{f} = 1} \sum\limits_{j = 1}^{\infty} \left(\norm{[A,M_{\varphi_{j,t}}]M_{\psi_{j,t}}f}^p + \norm{[A,M_{\varphi_{j,t}}]M_{1-\psi_{j,t}}f}^p\right)\\
&\leq 2^p\sup\limits_{\norm{f} = 1} \sum\limits_{j = 1}^{\infty} \left(\norm{[A,M_{\varphi_{j,t}}]}^p\norm{M_{\psi_{j,t}}f}^p + \norm{M_{\varphi_{j,t}}AM_{1-\psi_{j,t}}f}^p\right)\\
&\leq 2^pN\sup\limits_{j \in \N} \norm{[A,M_{\varphi_{j,t}}]}^p + \sup\limits_{\norm{f} = 1} \sum\limits_{j = 1}^{\infty} \norm{M_{\varphi_{j,t}}AM_{1-\psi_{j,t}}f}^p,
\end{align*}
and Proposition \ref{BDO_characterization}, we obtain
\[\lim\limits_{t \to 0} \sup\limits_{\norm{f} = 1} \sum\limits_{j = 1}^{\infty} \norm{[A,M_{\varphi_{j,t}}]f}^p = 0\]
as claimed.

Conversely, assume that \eqref{eq_BDO_characterization2} holds. Clearly, this implies $\lim\limits_{t \to 0} \sup\limits_{j \in \N} \norm{[A,M_{\varphi_{j,t}}]} = 0$ as well (cf.~proof of Corollary \ref{BDO_characterization_cor}). We can thus proceed as above to obtain
\begin{align*}
\sup\limits_{\norm{f} = 1}\sum\limits_{j = 1}^{\infty} \norm{M_{\varphi_{j,t}}AM_{1-\psi_{j,t}}f}^p &\leq 2^p\sup\limits_{\norm{f} = 1}\sum\limits_{j = 1}^{\infty} \left(\norm{[M_{\varphi_{j,t}},A]f}^p + \norm{[M_{\varphi_{j,t}},A]M_{\psi_{j,t}}f}^p\right)\\
&\leq 2^p\sup\limits_{\norm{f} = 1}\sum\limits_{j = 1}^{\infty} (\norm{[M_{\varphi_{j,t}},A]f}^p + \norm{[M_{\varphi_{j,t}},A]}^p\norm{M_{\psi_{j,t}}f}^p)\\
&\leq 2^p\sup\limits_{\norm{f} = 1}\sum\limits_{j = 1}^{\infty} \norm{[M_{\varphi_{j,t}},A]f}^p + 2^pN\sup\limits_{j \in \N} \norm{[M_{\varphi_{j,t}},A]}^p.
\end{align*}
and by assumption, this tends to $0$ as $t \to 0$. Thus $A \in \BDO^p_{\nu}$ by Proposition \ref{BDO_characterization}.
\end{proof}

Here are some algebraic properties of $\BDO^p_{\nu}$:

\begin{prop} \label{prop0}
$\BDO^p_{\nu}$ has the following properties:
\begin{itemize}
	\item[$(i)$] It holds $M_f \in \BDO^p_{\nu}$ for all $f \in L^{\infty}(\Omega)$.
	\item[$(ii)$] $\BDO^p_{\nu}$ is a closed subalgebra of $\Lc(L^p_{\nu})$.
	\item[$(iii)$] If $A \in \BDO^p_{\nu}$ is Fredholm, then every regularizer $B$ of $A$ is again in $\BDO^p_{\nu}$. In particular, $\BDO^p_{\nu}$ is inverse closed.
	\item[$(iv)$] $\BDO^p_{\nu}$ contains $\Kc(L^p_{\nu})$ as a closed two-sided ideal.
	\item[$(v)$] It holds $A \in \BDO^p_{\nu} \Longleftrightarrow A^* \in \BDO^q_{\nu}$ for $\frac{1}{p} = \frac{1}{q} = 1$. In particular, $\BDO^2_{\nu}$ is a $C^*$-algebra.
\end{itemize}
\end{prop}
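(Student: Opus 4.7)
The plan is to establish the five items in the order (i), (ii), (v), (iv), (iii), since the inverse closedness argument in (iii) will rely on the fact that compact operators are band-dominated from (iv).

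Part (i) is immediate: a multiplication operator $M_f$ is itself a band operator of band width $0$, since $M_g M_f M_h = M_{gfh}$ vanishes whenever $\supp g \cap \supp h = \emptyset$. Closedness in (ii) is built into the definition, and the algebra property follows from the Leibniz identity $[AB, M_{\varphi_{j,t}}] = A[B, M_{\varphi_{j,t}}] + [A, M_{\varphi_{j,t}}]B$, which bounds $\sum_j \|[AB, M_{\varphi_{j,t}}] f\|^p$ in terms of the analogous sums for $A$ and $B$, giving the algebra property via Proposition \ref{BDO_characterization2}. For (v), a band operator of band width $\omega$ has adjoint of band width $\omega$ on $L^q_\nu$ since $(M_f A M_g)^* = M_{\bar g} A^* M_{\bar f}$; this passes to norm limits, and the $C^*$ claim follows by combining with (ii).

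For (iv), I would first observe that a rank-one operator $f \otimes g$ with $f$ and $g$ supported in a Bergman-bounded set $S \subset \Omega$ is a band operator of band width at most $\diam_\beta(S)$: if $\dist_\beta(\supp a, \supp b) > \diam_\beta(S)$, then $\supp a$ and $\supp b$ cannot both meet $S$, so $M_a(f \otimes g)M_b = 0$. Approximating arbitrary $f \in L^p_\nu$ and $g \in L^q_\nu$ by truncations with compact support in $\Omega$ shows that all rank-one operators lie in $\BDO^p_\nu$; closing under finite sums and norm limits gives $\Kc(L^p_\nu) \subset \BDO^p_\nu$. The two-sided ideal property is inherited from $\Kc(L^p_\nu)$ being a two-sided ideal in $\Lc(L^p_\nu)$.

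The heart of the matter is (iii). Given a regularizer $B$ of a Fredholm $A \in \BDO^p_\nu$, write $AB = I + K$ and $BA = I + K'$ with $K, K' \in \Kc(L^p_\nu)$. The identity $BAM_\varphi - M_\varphi BA = [K', M_\varphi]$, combined with the Leibniz rule on the left side, gives $B[A, M_\varphi] + [B, M_\varphi] A = [K', M_\varphi]$; multiplying by $B$ on the right and using $AB = I + K$ leads to
\[
[B, M_\varphi] = -B[A, M_\varphi]B + [K', M_\varphi]B - [B, M_\varphi] K.
\]
The last term looks circular, but using $[B, M_\varphi] K = [BK, M_\varphi] - B[K, M_\varphi]$ one arrives at
\[
[B, M_\varphi] = -B[A, M_\varphi]B + [K', M_\varphi]B - [BK, M_\varphi] + B[K, M_\varphi],
\]
with no $[B, M_\varphi]$ remaining on the right. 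Setting $\varphi = \varphi_{j,t}$ and summing, each term satisfies the summability condition of Proposition \ref{BDO_characterization2}: the first because $A \in \BDO^p_\nu$, the other three because $K', BK, K \in \Kc(L^p_\nu) \subset \BDO^p_\nu$ by (iv). Proposition \ref{BDO_characterization2} then concludes $B \in \BDO^p_\nu$; inverse closedness is the special case $K = K' = 0$. The main obstacle is finding the right algebraic identity for $[B, M_\varphi]$; the crucial step is the Leibniz trick that removes the recursive $[B, M_\varphi]$ term, without which the argument would fold into itself and be unusable.
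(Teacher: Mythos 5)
Your proof is correct, but it follows a genuinely different route through items (iii) and (iv) than the paper's. In the paper, (iii) is proved first and without any reference to (iv): the author introduces the shifted cutoff $\tilde\varphi_{j,t}:=\varphi_{j,t}-\varphi_{j,t}(0)$ and uses the identity
\[
[B,M_{\varphi_{j,t}}]=[B,M_{\tilde\varphi_{j,t}}]
= B[M_{\tilde\varphi_{j,t}},A]B - BM_{\tilde\varphi_{j,t}}K_1 + K_2M_{\tilde\varphi_{j,t}}B,
\]
in which only the first term on the right is a commutator; the other two, which involve $M_{\tilde\varphi_{j,t}}$ directly, are killed by the fact that $\tilde\varphi_{j,t}$ vanishes on a large Bergman ball about the origin for all but at most $N$ indices $j$, combined with the decay $\|M_{\chi_{D(0,R)^c}}K_1\|\to 0$. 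Item (iv) is then extracted as a byproduct of that same estimate. You reverse the order: you establish (iv) first via the elementary observation that rank-one operators with compactly supported kernel are band operators and that $\Kc(L^p_\nu)$ is the closure of their span, and then for (iii) you rearrange the algebra so that \emph{every} term on the right is a commutator --- the crucial Leibniz step $[B,M_\varphi]K=[BK,M_\varphi]-B[K,M_\varphi]$ eliminates the recursive term and produces
\[
[B,M_\varphi]=-B[A,M_\varphi]B+[K',M_\varphi]B-[BK,M_\varphi]+B[K,M_\varphi],
\]
so Proposition \ref{BDO_characterization2} applies to each piece once (iv) is available. This buys you a cleaner proof of (iii): no shifted cutoff, no geometric estimate on the support of $\tilde\varphi_{j,t}$, everything reduced to commutator bounds. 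The price is that you must prove $\Kc(L^p_\nu)\subset\BDO^p_\nu$ independently, which you do via finite-rank approximation and the approximation property of $L^p_\nu$ --- a short and standard argument, so the trade is favorable. The remaining items (i), (ii), (v) match what the paper considers routine; your Leibniz argument for the algebra property in (ii) is fine, though one can also verify directly that the product of band operators of widths $\omega_A,\omega_B$ is a band operator of width at most $\omega_A+\omega_B$.
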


\begin{proof}
$(i)$, $(ii)$ and $(v)$ are easy to see.

$(iii)$: Set $\tilde{\varphi}_{j,t} := \varphi_{j,t} - \varphi_{j,t}(0)$ for all $j \in \N$ and $t \in (0,1)$. If $A$ is Fredholm, there exist a regularizer $B \in \Lc(L^p_{\nu})$ and compact operators $K_1,K_2 \in \Kc(L^p_{\nu})$ such that $AB = I + K_1$ and $BA = I + K_2$. This implies
\[[B,M_{\varphi_{j,t}}] = [B,M_{\tilde{\varphi}_{j,t}}] = B[M_{\tilde{\varphi}_{j,t}},A]B - BM_{\tilde{\varphi}_{j,t}}K_1 + K_2M_{\tilde{\varphi}_{j,t}}B.\]
Thus
\begin{equation} \label{eq_Reg_BDO}
\sup\limits_{\norm{f} = 1} \sum\limits_{j = 1}^{\infty} \norm{[B,M_{\varphi_{j,t}}]f}^p \leq 3^p \sup\limits_{\norm{f} = 1} \sum\limits_{j = 1}^{\infty} \left(\norm{B[M_{\varphi_{j,t}},A]Bf}^p + \norm{BM_{\tilde{\varphi}_{j,t}}K_1f}^p + \norm{K_2M_{\tilde{\varphi}_{j,t}}Bf}^p\right).
\end{equation}
As $A$ is band-dominated, the first term tends to $0$ as $t \to 0$ by Proposition \ref{BDO_characterization2}. To estimate the other two terms, define $D(0,R) := \set{z \in \Omega : \beta(0,z) < R}$ and $D(0,R)^c := \Omega \setminus D(0,R)$ for $R > 0$. Since $K_1$ and $K_2$ are compact, $\|M_{\chi_{D(0,R)^c}}K_1\|$ and $\|K_2M_{\chi_{D(0,R)^c}}\|$ tend to $0$ as $R \to \infty$. Moreover, as for fixed $t$ the origin is contained in at most $N$ of the sets $\Xi_{j,t,3} = \set{z \in \Omega : \dist_{\beta}(z,B_{j,t}) \leq \frac{1}{t}}$ and $\supp \varphi_{j,t} = \Xi_{j,t,1} = \set{z \in \Omega : \dist_{\beta}(z,B_{j,t}) \leq \frac{1}{3t}}$, we get that $\tilde{\varphi}_{j,t}$ vanishes on $D(0,\frac{2}{3\sqrt{t}}) \subset D(0,\frac{2}{3t})$ for all but at most $N$ integers $j$. W.l.o.g.~we may assume that these integers are $j = 1, \ldots, N$. For $j = 1, \ldots, N$ and $z \in D(0,\frac{2}{3\sqrt{t}})$ it holds $|\tilde{\varphi}_{j,t}(z)| \leq 6Nt\frac{2}{3\sqrt{t}} = 4N\sqrt{t}$ by property $(iii)$ of $\varphi_{j,t}$. Therefore
\begin{align*}
\sup\limits_{\norm{f} = 1} \sum\limits_{j = 1}^{\infty} \norm{BM_{\tilde{\varphi}_{j,t}}K_1f}^p &\leq 2^p\sup\limits_{\norm{f} = 1} \sum\limits_{j = 1}^{\infty} \left(\norm{BM_{\tilde{\varphi}_{j,t}}M_{\chi_{D(0,R)^c}}K_1f}^p + \norm{BM_{\tilde{\varphi}_{j,t}}M_{\chi_{D(0,R)}}K_1f}^p\right)\\
&\leq 2^p\norm{B}^p\|M_{\chi_{D(0,R)^c}}K_1\|^p\sup\limits_{\norm{f} = 1} \sum\limits_{j = 1}^{\infty} \norm{M_{\tilde{\varphi}_{j,t}}f}^p\\
&\quad + 2^pN\norm{B}^p(4N\sqrt{t})^p\norm{K_1}^p
\end{align*}
for $t < \frac{4}{9R^2}$. As
\[\sup\limits_{\norm{f} = 1} \sum\limits_{j = 1}^{\infty} \norm{M_{\tilde{\varphi}_{j,t}}f}^p = \sup\limits_{\norm{f} = 1} \left(\sum\limits_{j = 1}^N \norm{M_{\tilde{\varphi}_{j,t}}f}^p + \sum\limits_{j = N+1}^{\infty} \norm{M_{\varphi_{j,t}}f}^p\right)\]
is bounded by $2N$, this implies $\lim\limits_{t \to 0} \sup\limits_{\norm{f} = 1} \sum\limits_{j = 1}^{\infty} \norm{BM_{\tilde{\varphi}_{j,t}}K_1f}^p = 0$. Similarly, we obtain the equality $\lim\limits_{t \to 0} \sup\limits_{\norm{f} = 1} \sum\limits_{j = 1}^{\infty} \norm{K_2M_{\tilde{\varphi}_{j,t}}Bf}^p = 0$. Plugging these observations into \eqref{eq_Reg_BDO}, we conclude
\[\lim\limits_{t \to 0} \sup\limits_{\norm{f} = 1} \sum\limits_{j = 1}^{\infty} \norm{[B,M_{\varphi_{j,t}}]f}^p = 0\]
and hence $B \in \BDO^p_{\nu}$ by Proposition \ref{BDO_characterization2}.

$(iv)$ The argument in $(iii)$ shows $\lim\limits_{t \to 0} \sup\limits_{\norm{f} = 1} \sum\limits_{j = 1}^{\infty} \norm{M_{\tilde{\varphi}_{j,t}}Kf}^p = \lim\limits_{t \to 0} \sup\limits_{\norm{f} = 1} \sum\limits_{j = 1}^{\infty} \norm{KM_{\tilde{\varphi}_{j,t}}f}^p = 0$ for $K \in \Kc(L^p_{\nu})$. Thus the assertion follows again by Proposition \ref{BDO_characterization2}.
\end{proof}

To prove Theorem \ref{thm0}, we will only need one more auxiliary lemma. A similar result for the unit ball was shown in \cite[Lemma 3.4]{MiSuWi}.

\begin{lem} \label{lem1}
Let $(\alpha,\nu,p)$ be an admissible triple and for every $j$ let $a_j,b_j \from \Omega \to [0,1]$ be measurable functions. If
\begin{itemize}
	\item[$(i)$] there exists a $\sigma \geq 0$ such that $\dist_{\beta}(\supp a_j,\supp (1-b_j)) \geq \sigma$ for all $j \in \N$,
	\item[$(ii)$] there is an integer $N$ such that every $z \in \Omega$ belongs to at most $N$ of the sets $\supp a_j$ and to at most $N$ of the sets $\supp b_j$,
\end{itemize}
then there is a function $\sigma \mapsto \beta_{p,\alpha,\nu}(\sigma)$ (depending only on $p$, $\alpha$ and $\nu$) converging to $0$ as $\sigma \to \infty$ such that
\[\norm{\sum\limits_{j = 1}^{\infty} M_{a_j}P_{\alpha}M_{1-b_j}} \leq N\beta_{p,\alpha,\nu}(\sigma).\]
In other words,
\[\norm{\sum\limits_{j = 1}^{\infty} M_{a_j}P_{\alpha}M_{1-b_j}} \to 0\] 
as $\inf\limits_{j \in \N}\dist_{\beta}(\supp a_j,\supp(1-b_j)) \to \infty$.
\end{lem}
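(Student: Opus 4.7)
The plan is to bound the norm by applying the Schur test to the integral kernel of $T := \sum_{j=1}^{\infty} M_{a_j} P_\alpha M_{1-b_j}$ on $L^p_\nu$, and to extract decay in $\sigma$ from an off-diagonal estimate of the Bergman kernel. First I would realize $T$ as an integral operator with kernel $K(z,w) = \tilde{R}_\alpha(z,w) \sum_j a_j(z)(1-b_j(w))$, where $\tilde{R}_\alpha(z,w) = (c_\alpha/c_\nu)\,h(z,w)^{-\alpha-g}\,h(w,w)^{\alpha-\nu}$ is the kernel of $P_\alpha$ relative to $dv_\nu$. By hypothesis $(i)$ every nonzero summand satisfies $\beta(z,w) \geq \sigma$, and by hypothesis $(ii)$ at most $N$ of the $a_j(z)$ are nonzero at any given $z$; since $0 \leq a_j, 1-b_j \leq 1$, this gives
\[|K(z,w)| \leq N\,\chi_{\{\beta(z,w) \geq \sigma\}}\, |\tilde{R}_\alpha(z,w)|.\]

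Next I would invoke the classical off-diagonal estimate $|h(z,w)|^2 \geq c\,h(z,z)\,h(w,w)\,e^{2\beta(z,w)}$, which in the unit ball is immediate from $1-|\phi_z(w)|^2 = h(z,z)h(w,w)/|h(z,w)|^2$ together with the explicit formula for $\beta$, and which is standard for a general bounded symmetric domain (see \cite{Englis,Upmeier}). Peeling off a factor $|h(z,w)|^{-\eta}$ from $\tilde{R}_\alpha$ for a small $\eta > 0$, on $\{\beta(z,w) \geq \sigma\}$ this upgrades the kernel estimate above to
\[|K(z,w)| \leq N\, C_1\, e^{-\eta \sigma}\, h(z,z)^{-\eta/2}\, h(w,w)^{\alpha-\nu-\eta/2}\, |h(z,w)|^{-(\alpha-\eta+g)}\]
with $C_1$ independent of $\sigma$. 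The majorant is essentially (up to the twist $h(z,z)^{-\eta/2}$) the kernel of $P_{\alpha-\eta}$ on $L^p_\nu$, so applying the Schur test with test function $h(z,z)^s$ exactly as in the proof of Proposition~\ref{BoundedProjections} and invoking \cite[Theorem~4.1]{FaKo} at the shifted exponents would give a bound on $L^p_\nu$ with some constant $C_2(\eta)$ that is independent of $\sigma$. Combining these two estimates,
\[\Big\|\sum_{j=1}^{\infty} M_{a_j} P_\alpha M_{1-b_j}\Big\| \leq N\, C_1\, C_2(\eta)\, e^{-\eta\sigma} =: N\, \beta_{p,\alpha,\nu}(\sigma),\]
which tends to $0$ as $\sigma \to \infty$ with exponential rate.

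The main obstacle is making this last step rigorous: after the shift $\alpha \mapsto \alpha - \eta$ and the insertion of the weights $h(z,z)^{-\eta/2}$, $h(w,w)^{-\eta/2}$, the two Schur conditions of Proposition~\ref{BoundedProjections} translate into a small $\eta$-perturbation of the open interval constraints on $s$ given there. Since admissibility of $(\alpha,\nu,p)$ is an open condition, the perturbed window remains non-empty for all sufficiently small $\eta > 0$; but verifying this amounts to checking that the relevant Faraut--Koranyi integrals still satisfy the strict inequalities for the shifted exponents, and it is essentially the only place in the argument where a short quantitative computation is needed.
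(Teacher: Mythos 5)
Your plan is essentially sound but takes a genuinely different route from the paper. You obtain the decay in $\sigma$ from a pointwise off-diagonal bound on the Bergman kernel, peeling off a small power $|h(z,w)|^{-\eta}$ which you dominate by $h(z,z)^{-\eta/2}h(w,w)^{-\eta/2}e^{-\eta\kappa\sigma/2}$ on $\{\beta(z,w)\geq\sigma\}$, and then run Schur's test on the remaining $\sigma$-independent kernel with slightly shifted exponents. The paper instead runs Schur's test directly on the truncated kernel: in the first Schur integral the change of variables $w=\phi_z(u)$ converts the constraint $\beta(z,w)\geq\sigma$ into $u\in\Omega\setminus D(0,\sigma)$, and H\"older's inequality with auxiliary exponents $p_0,q_0$ then factors out $|\Omega\setminus D(0,\sigma)|^{1/p_0}\to 0$ from a $\sigma$-independent Faraut--Koranyi integral; the second Schur integral needs no decay at all, and the case $N>1$ is handled by the disjoint decomposition of $\supp a_j$ rather than by the pointwise bound $\sum_j a_j(z)(1-b_j(w))\leq N$ as in your sketch. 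The paper's argument thus requires neither an explicit off-diagonal inequality for $h$ nor any $\eta$-perturbation of $(\alpha,\nu)$, whereas yours is closer in spirit to the estimates in Su\'arez and Mitkovski--Su\'arez--Wick and exhibits the exponential decay rate more transparently. Two cautions: the exponent $2$ in $|h(z,w)|^2\geq c\,h(z,z)h(w,w)e^{2\beta(z,w)}$ is correct for the unit ball with the usual normalization of $\beta$, but for a bounded symmetric domain of rank $r>1$ the sharp exponent degrades (roughly by a factor $1/\sqrt{r}$ times a normalization constant); fortunately any positive exponent suffices for your argument, so this is cosmetic. And you rightly flag the remaining quantitative step, namely that the Schur window of Proposition~\ref{BoundedProjections} survives the $\eta$-shift and the twist by $h(z,z)^{-\eta/2}h(w,w)^{\eta/2}$; this does go through, since the intervals for $s$ there are open and depend continuously on all exponents, but the claim deserves to be written out rather than asserted.
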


\begin{proof}
Let us consider the case where every $z \in \Omega$ belongs to at most $1$ of the sets $\supp a_j$ first. Define
\[\Phi(z,w) := \sum\limits_{j = 1}^{\infty} \chi_{\supp a_j}(z)\chi_{\supp(1-b_j)}(w)\abs{h(z,w)}^{-\alpha-g}.\]
Then
\begin{align*}
\abs{\sum\limits_{j=1}^{\infty} (M_{a_j}P_{\alpha}M_{1-b_j}f)(z)} &= \abs{\sum\limits_{j=1}^{\infty} a_j(z) \int_{\Omega} (1-b_j(w))f(w)h(z,w)^{-\alpha-g} \, \mathrm{d}v_{\alpha}(w)}\\
&\leq \frac{c_{\alpha}}{c_{\nu}}\int_{\Omega} \Phi(z,w)h(w,w)^{\alpha-\nu}\abs{f(w)} \, \mathrm{d}v_{\nu}(w).
\end{align*}
As in the proof of Proposition \ref{BoundedProjections}, we want to apply Schur's test with $h(z) := h(z,z)^s$, where $s \in \R$ has to be determined later. We thus need to show that there exist two constants $C_1$ and $C_2$ such that
\[(I1) := \int_{\Omega} \Phi(z,w)h(w,w)^{\alpha-\nu}h(w,w)^{sq} \, \mathrm{d}v_{\nu}(w) \leq C_1h(z,z)^{sq}\]
for (almost) every $z \in \Omega$ and
\[(I2) := \int_{\Omega} \Phi(z,w)h(w,w)^{\alpha-\nu}h(z,z)^{sp} \, \mathrm{d}v_{\nu}(z) \leq C_2h(w,w)^{sp}\]
for (almost) every $w \in \Omega$. 

So let $z \in \Omega$. We may assume that $z \in \supp a_j$ for some $j \in \N$, otherwise the left-hand side is just $0$. Now choose $s \in (-\frac{\alpha+1}{q},-\frac{(r-1)a}{2q}) \cap (-\frac{\nu+1}{p},-\frac{\nu}{p} + \frac{\alpha}{p} - \frac{(r-1)a}{2p})$ as in the proof of Proposition \ref{BoundedProjections} and $q_0 > 1$ sufficiently small such that $(sq+\alpha)q_0 > -1$ and $(sq+g)q_0 - g < -\frac{(r-1)a}{2}$. Moreover, let $D(w,r) := \set{z \in \Omega : \beta(w,z) < r}$ for midpoints $w$ and radii $r$. Since we assumed $N = 1$, there is only one term contributing to $\Phi(z,w)$ and so
\begin{align*}
(I1) &= c_{\nu}\int_{\Omega} \Phi(z,w)h(w,w)^{sq+\alpha} \, \mathrm{d}v(w)\\
&= c_{\nu}\int_{\supp(1-b_j)} \abs{h(z,w)}^{-\alpha-g}h(w,w)^{sq+\alpha} \, \mathrm{d}v(w)\\
&\leq c_{\nu}\int_{\Omega \setminus D(z,\sigma)} \abs{h(z,w)}^{-\alpha-g}h(w,w)^{sq+\alpha} \, \mathrm{d}v(w)\\
&= c_{\nu}\int_{\Omega \setminus D(0,\sigma)} \abs{h(\phi_z(0),\phi_z(u))}^{-\alpha-g}h(\phi_z(u),\phi_z(u))^{sq+\alpha} \, \mathrm{d}\phi_z(u)\\
&= c_{\nu}\int_{\Omega \setminus D(0,\sigma)} \abs{\frac{h(z,z)}{h(z,u)}}^{-\alpha-g}\left(\frac{h(z,z)h(u,u)}{\abs{h(z,u)}^2}\right)^{sq+\alpha} \frac{h(z,z)^g}{\abs{h(z,u)}^{2g}} \, \mathrm{d}v(u)\\
&= c_{\nu}h(z,z)^{sq}\int_{\Omega \setminus D(0,\sigma)} \abs{h(z,u)}^{-2sq-\alpha-g}h(u,u)^{sq+\alpha} \, \mathrm{d}v(u)\\
&\leq c_{\nu}h(z,z)^{sq}\abs{\Omega \setminus D(0,\sigma)}^{1/p_0}\left(\int_{\Omega} \abs{h(z,u)}^{-(2sq+\alpha+g)q_0}h(u,u)^{(sq+\alpha)q_0} \, \mathrm{d}v(u)\right)^{1/q_0}\\
&\leq c_{\nu}h(z,z)^{sq}\abs{\Omega \setminus D(0,\sigma)}^{1/p_0}C^{1/q_0},
\end{align*}
where $C$ is some constant (coming from the Rudin-Forelli estimates \cite[Theorem 4.1]{FaKo}) and $\frac{1}{p_0} + \frac{1}{q_0} = 1$ as usual.

Now let $w \in \Omega$. We obtain
\begin{align*}
(I2) &= c_{\nu}h(w,w)^{\alpha-\nu}\int_{\Omega} \Phi(z,w)h(z,z)^{sp+\nu} \, \mathrm{d}v(z)\\
&\leq c_{\nu}h(w,w)^{\alpha-\nu}\int_{\Omega} \sum_{j = 1}^{\infty} \chi_{\supp a_j}(z)\abs{h(z,w)}^{-\alpha-g}h(z,z)^{sp+\nu} \, \mathrm{d}v(z)\\
&\leq c_{\nu}h(w,w)^{\alpha-\nu}\int_{\Omega} \abs{h(z,w)}^{-\alpha-g}h(z,z)^{sp+\nu} \, \mathrm{d}v(z)\\
&\leq C_2h(w,w)^{sp}
\end{align*}
for $s \in (-\frac{\nu+1}{p},-\frac{\nu}{p} + \frac{\alpha}{p} - \frac{(r-1)a}{2p}))$ as in Proposition \ref{BoundedProjections}. Thus, by Schur's test, we have the following norm estimate:
\[\norm{\sum\limits_{j=1}^{\infty} M_{a_j}P_{\alpha}M_{1-b_j}} \leq \left(c_{\nu}\abs{\Omega \setminus D(0,\sigma)}^{1/p_0}C^{1/q_0}\right)^{1/q}C_2^{1/p} =: \beta_{p,\alpha,\nu}(\sigma)\]
with $\beta_{p,\alpha,\nu}(\sigma) \to 0$ as $\sigma \to \infty$. This proves the estimate in the case $N = 1$.

Now let us consider the case $N > 1$. As in the proof of Proposition \ref{AuxiliaryProp2}, there is a disjoint decomposition $\supp a_j = A_j^1 \cup \ldots \cup A_j^N$ such that the sets $(A_j^i)_{j \in \N}$ are again measurable and pairwise disjoint for every $i \in \set{1, \ldots, N}$. It follows
\begin{align*}
\norm{\sum\limits_{j=1}^{\infty} M_{a_j}P_{\alpha}M_{1-b_j}} &= \norm{\sum\limits_{j=1}^{\infty}\sum\limits_{i=1}^N M_{a_j\chi_{A_j^i}}P_{\alpha}M_{1-b_j}}\\
&\leq\sum\limits_{i=1}^N \norm{\sum\limits_{j=1}^{\infty}M_{a_j\chi_{A_j^i}}P_{\alpha}M_{1-b_j}}\\
&\leq N\beta_{p,\alpha,\nu}(\sigma)\qedhere.
\end{align*}
\end{proof}

\begin{proof}[Proof of Theorem \ref{thm0}]
Combining Lemma \ref{lem1} and Proposition \ref{BDO_characterization}, we get that $P_{\alpha}$ is band-dominated. By Proposition \ref{prop0} the set $\BDO^p_{\nu}$ is a Banach algebra that contains all multiplication operators. It thus contains all operators of the form $T_fP_{\alpha}+Q_{\alpha} = P_{\nu}M_fP_{\alpha} + Q_{\alpha}$ and therefore all operators of the form $AP_{\alpha}+Q_{\alpha}$ with $A \in \Tf_{p,\nu}$.
\end{proof}

\section{A Fredholm criterion for band-dominated operators} \label{Fredholm_criterion}

In this rather short section we show a Fredholm criterion for band-dominated operators. First, we need another auxiliary proposition that is of course well-known. For completeness we include a short proof.

\begin{prop} \label{prop2.5}
Let $(\alpha,\nu,p)$ be an admissible triple and let $D \subset \Omega$ be a compact set. Then the operators $P_{\alpha}M_{\chi_D}$ and $M_{\chi_D}P_{\alpha} \from L^p_{\nu} \to L^p_{\nu}$ are compact.
\end{prop}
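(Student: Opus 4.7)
The plan is to handle both operators by the same mechanism: whenever $P_\alpha$ appears it produces holomorphic functions, and bounded families of holomorphic functions on $\Omega$ are normal (Montel), from which $L^p_\nu$-convergence follows because $dv_\nu$ is a \emph{finite} measure.

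For $P_\alpha M_{\chi_D}$ I would first observe that, because $D$ is compactly contained in $\Omega$ and the Jordan triple determinant $h$ is continuous and nonvanishing on $\overline{\Omega}\times \Omega$, there exists $c>0$ with $\abs{h(z,w)}\geq c$ for all $z\in\Omega$, $w\in D$ (on $\B^n$ this is the elementary estimate $\abs{1-\sp{z}{w}}\geq 1-\abs{w}$). Consequently
\[
\abs{(P_\alpha M_{\chi_D}f)(z)} = \abs{\int_D f(w)\,h(z,w)^{-\alpha-g}\,dv_\alpha(w)} \leq C\,\norm{f}_{L^p_\nu}
\]
by Hölder's inequality on $D$, so $P_\alpha M_{\chi_D}$ actually maps $L^p_\nu$ boundedly into $H^\infty(\Omega)$. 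Given a bounded sequence $(f_n)\subset L^p_\nu$, the functions $g_n := P_\alpha M_{\chi_D} f_n$ are holomorphic and uniformly bounded on $\Omega$; Montel's theorem yields a subsequence $g_{n_k}$ converging locally uniformly to some bounded holomorphic $g$. Since the sequence is uniformly bounded and $dv_\nu(\Omega)=1$, dominated convergence gives $g_{n_k}\to g$ in $L^p_\nu$, proving compactness.

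For $M_{\chi_D}P_\alpha$ I would proceed symmetrically. Given a bounded sequence $(f_n)\subset L^p_\nu$, the images $g_n:=P_\alpha f_n$ lie in $A^p_\nu$ and are norm-bounded. The standard pointwise estimate for weighted Bergman functions (a consequence of boundedness of the reproducing kernel on compact subsets) gives $\abs{g_n(z)}\leq C(K)\norm{g_n}_{p,\nu}$ uniformly for $z$ in any compact $K\subset\Omega$; choosing such a $K$ containing $D$ in its interior, Montel provides a subsequence $g_{n_k}$ converging uniformly on $K$, in particular uniformly on $D$. Uniform convergence on $D$ together with $dv_\nu(D)<\infty$ yields $M_{\chi_D}g_{n_k}\to M_{\chi_D}g$ in $L^p_\nu$, so $M_{\chi_D}P_\alpha$ is compact.

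The main technical point I expect to double-check is the uniform bound $\abs{h(z,w)}\geq c$ on $\Omega\times D$: this requires that $h$ does not vanish on $\overline{\Omega}\times \Omega$, which is standard for bounded symmetric domains (and entirely explicit on $\B^n$) but deserves a sentence of justification. Everything else is routine: the estimate for point evaluations in $A^p_\nu$, Montel's theorem, and dominated convergence against the finite measure $dv_\nu$.
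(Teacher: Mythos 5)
Your argument is correct, but it follows a genuinely different route from the paper. The paper's proof writes both operators as integral operators with kernel $\chi_D(w)h(z,w)^{-\alpha-g}h(w,w)^{\alpha}$ (resp.\ $\chi_D(z)h(z,w)^{-\alpha-g}h(w,w)^{\alpha}$), observes that this kernel is uniformly bounded because $D$ is compact, and invokes the Hille--Tamarkin theorem to conclude compactness at one stroke. Your proof instead exploits the fact that $P_{\alpha}$ outputs holomorphic functions and runs a normal-families argument: bounded image in $H^{\infty}(\Omega)$ (first operator) resp.\ locally uniform bound via point evaluations on $A^p_{\nu}$ (second operator), Montel, and then $L^p_{\nu}$-convergence from dominated convergence against the finite measure $\mathrm{d}v_{\nu}$. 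Both approaches are sound. Yours is more elementary in that it avoids the Hille--Tamarkin machinery and makes the role of holomorphy explicit; the paper's is shorter and treats both operators uniformly through a single abstract criterion. One small point worth spelling out in your first step: the Hölder estimate on $D$ uses that $\mathrm{d}v_{\alpha}$ and $\mathrm{d}v_{\nu}$ are mutually comparable on the compact set $D$ (their ratio is a power of $h(w,w)$, which is bounded above and below on $D$); you use this implicitly when passing from $\mathrm{d}v_{\alpha}$ to $\norm{f}_{L^p_{\nu}}$. And, as you yourself flag, the uniform lower bound $\abs{h(z,w)}\geq c$ on $\Omega\times D$ rests on the non-vanishing of $h$ on $\overline{\Omega}\times\Omega$, which is standard for bounded symmetric domains and immediate for $\B^n$.
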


\begin{proof}
By definition,
\[(P_{\alpha}M_{\chi_D}f)(z) = c_{\alpha}\int_{\Omega} \chi_D(w)f(w)h(z,w)^{-\alpha-g} h(w,w)^{\alpha} \, \mathrm{d}v(w)\]
for all $f \in L^p_{\nu}$. As $D$ is compact, $\chi_D(w)h(z,w)^{-\alpha-g} h(w,w)^{\alpha}$ is uniformly bounded (shown for example in the proof of \cite[Proposition 3]{Englis2}). This implies that $P_{\alpha}M_{\chi_{B_R}}$ is compact by the Hille-Tamarkin theorem (see e.g.~\cite[Theorem 41.6]{Zaanen}).

Similarly,
\[(M_{\chi_D}P_{\alpha}f)(z) = c_{\alpha}\int_{\Omega} \chi_D(z)f(w)h(z,w)^{-\alpha-g} h(w,w)^{\alpha} \, \mathrm{d}v(w)\]
and thus $M_{\chi_D}P_{\alpha}$ is compact by the same argument.
\end{proof}

We will also need the following lemma, which is a small modification of \cite[Proposition 13]{RaRoSi}.

\begin{lem} \label{lem0.5}
For $j \in \N$ let $a_j, b_j \from \Omega \to [0,1]$ be measurable functions and $A_j \in \Lc(L^p_{\nu})$ so that the sequence $(A_j)_{j \in \N}$ is uniformly bounded. If there is an integer $N$ such that every $z \in \Omega$ belongs to at most $N$ of the sets $\supp a_j$ and to at most $N$ of the sets $\supp b_j$, then the series $\sum\limits_{j = 1}^{\infty} M_{a_j}A_jM_{b_j}$ converges $*$-strongly and
\[\norm{\sum\limits_{j = 1}^{\infty} M_{a_j}A_jM_{b_j}} \leq N^2\sup\limits_{j \in \N} \norm{A_j}.\]
Moreover,
\[\sum\limits_{j = 1}^{\infty} \norm{M_{a_j}A_jM_{b_j}f}^p \leq N\sup\limits_{j \in \N} \norm{A_j}^p\norm{f}^p\]
for all $f \in L^p_{\nu}$.
\end{lem}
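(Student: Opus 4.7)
The plan is to establish the three assertions in the opposite order to the statement: first the pointwise $\ell^p$-sum bound, then the operator norm bound, and finally the $*$-strong convergence of the series.

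\textbf{Step 1 (The $\ell^p$-sum bound).} For each fixed $j$, I would estimate crudely
\[\norm{M_{a_j}A_jM_{b_j}f}^p \leq \norm{A_j}^p\norm{M_{b_j}f}^p \leq \sup_{k \in \N} \norm{A_k}^p \int_{\supp b_j} \abs{f(z)}^p \, \mathrm{d}v_{\nu}(z),\]
using $\abs{b_j} \leq 1$. Summing over $j$ and invoking Proposition \ref{AuxiliaryProp2} with the sets $U_j = \supp b_j$ (which overlap at most $N$-fold by hypothesis) yields
\[\sum_{j=1}^{\infty} \norm{M_{a_j}A_jM_{b_j}f}^p \leq \sup_{k \in \N} \norm{A_k}^p \cdot N\norm{f}^p,\]
which is the last stated inequality.

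\textbf{Step 2 (Operator norm bound).} For each $z \in \Omega$, only at most $N$ indices $j$ satisfy $a_j(z) \neq 0$, so the sum $\sum_j (M_{a_j}A_jM_{b_j}f)(z)$ has at most $N$ nonzero summands at $z$. Proposition \ref{AuxiliaryProp} (applied pointwise with the trivial terms dropped) then gives
\[\Bigl|\sum_{j=1}^{\infty}(M_{a_j}A_jM_{b_j}f)(z)\Bigr|^p \leq N^p \sum_{j=1}^{\infty} a_j(z)^p \bigl|(A_jM_{b_j}f)(z)\bigr|^p.\]
Integrating and applying Step 1,
\[\Bigl\|\sum_{j=1}^{\infty} M_{a_j}A_jM_{b_j}f\Bigr\|^p \leq N^p \sum_{j=1}^{\infty} \norm{M_{a_j}A_jM_{b_j}f}^p \leq N^{p+1} \sup_{k \in \N} \norm{A_k}^p\norm{f}^p.\]
Taking $p$-th roots and using $N^{(p+1)/p} \leq N^2$ for $p > 1$, $N \geq 1$, delivers the claimed norm estimate $N^2 \sup_k \norm{A_k}$.

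\textbf{Step 3 ($*$-strong convergence).} I would show that the partial sums $S_n := \sum_{j=1}^n M_{a_j}A_jM_{b_j}$ form a strongly Cauchy sequence. Indeed, the exact computation from Step 2, applied to the tail $S_n - S_m = \sum_{j=m+1}^n M_{a_j}A_jM_{b_j}$ for $n > m$, yields
\[\norm{(S_n - S_m)f}^p \leq N^p \sup_{k \in \N}\norm{A_k}^p \sum_{j=m+1}^n \norm{M_{b_j}f}^p.\]
Since $\sum_{j=1}^{\infty} \norm{M_{b_j}f}^p \leq N\norm{f}^p < \infty$ by Proposition \ref{AuxiliaryProp2}, the tail tends to $0$, so $(S_nf)_{n \in \N}$ is Cauchy in $L^p_{\nu}$ and $S_nf$ converges for every $f$; the limit defines the bounded operator $S$ to which the norm bound of Step 2 applies. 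For the adjoints, observe that since $a_j, b_j$ are real-valued with $\abs{a_j}, \abs{b_j} \leq 1$, one has $(M_{a_j}A_jM_{b_j})^* = M_{b_j}A_j^*M_{a_j}$ on $L^q_{\nu}$, and the hypotheses of the lemma are symmetric in $(a_j, b_j)$ and stable under passing to adjoints (with $\sup_j \norm{A_j^*} = \sup_j \norm{A_j}$). The same argument applied to the adjoint series gives strong convergence of $S_n^*$, completing the proof.

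The only delicate point is the bookkeeping of the overlap constants and the $p$-dependence in Step 2; everything else reduces to the two auxiliary propositions \ref{AuxiliaryProp} and \ref{AuxiliaryProp2} already recorded in the paper.
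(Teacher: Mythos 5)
Your proof is correct and follows essentially the same route as the paper: both arguments hinge on Proposition \ref{AuxiliaryProp} (at most $N$ nonzero terms pointwise) and Proposition \ref{AuxiliaryProp2} (the $N$-fold overlap bound for $\sum_j\norm{M_{b_j}f}^p$), combined to give $N^{p+1}\sup_j\norm{A_j}^p\norm{f}^p$. You merely chain the two propositions in the opposite order and spell out the Cauchy-tail and adjoint argument for the $*$-strong convergence, which the paper compresses to ``follows easily.''
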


\begin{proof}
Since every $z \in \Omega$ belongs to at most $N$ of the sets $\supp a_j$, it follows
\begin{align*}
\norm{\sum\limits_{j = 1}^{\infty} M_{a_j}A_jM_{b_j}f}^p &= \int_{\Omega} \abs{\sum\limits_{j = 1}^{\infty} M_{a_j}A_jM_{b_j}f}^p \, \mathrm{d}v_{\nu}\\
&\leq \int_{\Omega} N^p\sum\limits_{j = 1}^{\infty} \abs{M_{a_j}A_jM_{b_j}f}^p \, \mathrm{d}v_{\nu}\\
&= N^p\sum\limits_{j = 1}^{\infty}\int_{\Omega} \abs{M_{a_j}A_jM_{b_j}f}^p \, \mathrm{d}v_{\nu}\\
&= N^p\sum\limits_{j = 1}^{\infty} \norm{M_{a_j}A_jM_{b_j}f}^p
\end{align*}
for all $f \in L^p_{\nu}$ by Proposition \ref{AuxiliaryProp} (for every $z \in \Omega$ the sum over $j$ in the first line contains at most $N$ non-zero terms). Using Proposition \ref{AuxiliaryProp2}, we also get $\sum\limits_{j = 1}^{\infty} \norm{M_{b_j}f}^p \leq N\norm{f}^p$. We thus obtain
\[\norm{\sum\limits_{j = 1}^{\infty} M_{a_j}A_jM_{b_j}f}^p \leq N^p\sum\limits_{j = 1}^{\infty} \norm{M_{a_j}A_jM_{b_j}f}^p \leq N^p\sum\limits_{j = 1}^{\infty} \norm{A_j}^p\norm{M_{b_j}f}^p \leq N^{p+1}\sup\limits_{j \in \N} \norm{A_j}^p\norm{f}^p\]
for all $f \in L^p_{\nu}$. This yields both inequalities and the $*$-strong convergence follows easily as well.
\end{proof}

\begin{prop} \label{prop3}
Let $(\alpha,\nu,p)$ be an admissible triple, $A \in \BDO^p_{\nu}$ satisfy $[A,P_{\alpha}] = 0$ and $\psi_{j,t}$ as above. If there is a constant $M > 0$ such that for every $t \in (0,1)$ there is a $j_0 \in \N$ such that for all $j \geq j_0$ there are operators $B_{j,t},C_{j,t} \in \Lc(L^p_{\nu})$ with $\norm{B_{j,t}},\norm{C_{j,t}} \leq M$ and
\[B_{j,t}AM_{\psi_{j,t}} = M_{\psi_{j,t}} = M_{\psi_{j,t}}AC_{j,t},\]
then $A|_{A^p_{\nu}} \in \Lc(A^p_{\nu})$ is Fredholm and $\norm{(A|_{A^p_{\nu}} + \Kc(A^p_{\nu}))^{-1}} \leq 2\min\set{\norm{P_{\nu}},\norm{P_{\alpha}}}N^2M$.
\end{prop}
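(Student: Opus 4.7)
The plan is to glue the local left inverses $B_{j,t}$ into a single global operator using the partition of unity from Section \ref{BDO}, and then project onto the Bergman space with either $P_\alpha$ or $P_\nu$. Concretely, fix $t \in (0,1)$ with corresponding $j_0$ from the hypothesis, and define
\[B_t^L := \sum_{j \geq j_0} M_{\psi_{j,t}} B_{j,t} M_{\varphi_{j,t}}.\]
This series converges $\ast$-strongly and $\|B_t^L\| \leq N^2 M$ by Lemma \ref{lem0.5}. The key algebraic observation is that $\psi_{j,t}\varphi_{j,t} = \varphi_{j,t}$, so right-multiplying the hypothesis $B_{j,t}AM_{\psi_{j,t}} = M_{\psi_{j,t}}$ by $M_{\varphi_{j,t}}$ yields $B_{j,t}AM_{\varphi_{j,t}} = M_{\varphi_{j,t}}$.

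I would then compute $B_t^L A$ by moving $M_{\varphi_{j,t}}$ past $A$ via commutators. The main-term sum collapses using the identity above and $\sum_j M_{\varphi_{j,t}} = I$, giving
\[B_t^L A = I - \sum_{j < j_0} M_{\varphi_{j,t}} - \tilde{R}_t, \qquad \tilde{R}_t := \sum_{j \geq j_0} M_{\psi_{j,t}} B_{j,t} [A, M_{\varphi_{j,t}}].\]
The heart of the proof is showing $\|\tilde{R}_t\| \to 0$ as $t \to 0$; this is where band-domination plays its essential role. Each commutator $[A,M_{\varphi_{j,t}}]$ is merely bounded, so a naive estimate fails. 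However, the sets $\supp\psi_{j,t}$ have overlap at most $N$, so Proposition \ref{AuxiliaryProp} gives the pointwise bound
\[\|\tilde{R}_t f\|^p \leq N^p \sum_{j \geq j_0} \|M_{\psi_{j,t}} B_{j,t}[A,M_{\varphi_{j,t}}]f\|^p \leq N^p M^p \sum_{j} \|[A,M_{\varphi_{j,t}}]f\|^p,\]
whose supremum over $\|f\|=1$ tends to $0$ as $t \to 0$ by Proposition \ref{BDO_characterization2}, exactly because $A \in \BDO^p_\nu$.

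Finally, I pass to $A^p_\nu$. Let $P$ denote whichever of $P_\alpha$ and $P_\nu$ has the smaller norm; both are bounded projections onto $A^p_\nu$ by Proposition \ref{BoundedProjections} and the paper's standing hypothesis. Setting $\tilde{B}_t := P B_t^L|_{A^p_\nu}$ and using $[A,P_\alpha] = 0$, the computation above yields
\[\tilde{B}_t \, A|_{A^p_\nu} = I_{A^p_\nu} - \sum_{j < j_0} P M_{\varphi_{j,t}}|_{A^p_\nu} - P \tilde{R}_t|_{A^p_\nu}.\]
Each $\supp\varphi_{j,t} = \Xi_{j,t,1}$ is relatively compact in $\Omega$ (it is Bergman-bounded), so every $P M_{\varphi_{j,t}}$ is compact by Proposition \ref{prop2.5}, and the finite sum over $j < j_0$ is compact. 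Choosing $t$ small enough that $\|P\tilde{R}_t\| \leq \tfrac12$ exhibits $\tilde{B}_t A|_{A^p_\nu}$ as a compact perturbation of an invertible operator, so $A|_{A^p_\nu}$ has a left regularizer with Calkin-norm at most $2\|P\|N^2 M$. Running the mirror construction with $B_t^R := \sum_{j \geq j_0} M_{\varphi_{j,t}} C_{j,t} M_{\psi_{j,t}}$ and the identity $M_{\varphi_{j,t}} A C_{j,t} = M_{\varphi_{j,t}}$ produces a right regularizer with the same bound, yielding both Fredholmness and the estimate $\|(A|_{A^p_\nu} + \Kc(A^p_\nu))^{-1}\| \leq 2\min\{\|P_\nu\|,\|P_\alpha\|\} N^2 M$.
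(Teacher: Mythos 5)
Your construction of the left regularizer is correct and is in fact a mild streamlining of the paper's: by first deriving $B_{j,t}AM_{\varphi_{j,t}} = M_{\varphi_{j,t}}$ from $\psi_{j,t}\varphi_{j,t}=\varphi_{j,t}$, you obtain $B_t^L A = \sum_{j\geq j_0} M_{\varphi_{j,t}} - \tilde{R}_t$ with a single commutator error $\tilde{R}_t = \sum_{j\geq j_0} M_{\psi_{j,t}}B_{j,t}[A,M_{\varphi_{j,t}}]$, rather than the paper's three-way split. Your $\ell^p$-estimate of $\tilde{R}_t$ is valid: Proposition \ref{AuxiliaryProp} applies because the $M_{\psi_{j,t}}$ on the \emph{left} keeps the $j$-th output supported in $\supp\psi_{j,t}$, which has overlap $\le N$, and then Proposition \ref{BDO_characterization2} kills $\sum_j \norm{[A,M_{\varphi_{j,t}}]f}^p$.

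The right regularizer, however, is not the mirror image you claim, and this is a genuine gap. Computing $AB_t^R$ directly produces the error $\tilde{S}_t = \sum_{j\geq j_0}[A,M_{\varphi_{j,t}}]C_{j,t}M_{\psi_{j,t}}$, where the commutator now sits on the \emph{output} side with no $M_{\psi_{j,t}}$ in front of it; the supports of the summands are no longer controlled, so the pointwise overlap bound (Proposition \ref{AuxiliaryProp}) that drove your $\tilde{R}_t$-estimate simply does not apply here. The paper avoids this by not computing $AB_t^R$ directly at all: it runs the left-regularizer argument for $A^*\in\BDO^q_\nu$ and the operators $C_{j,t}^*$ on $L^q_\nu$, and then takes adjoints, so that $\norm{AC_t-\sum M_{\varphi_{j,t}}}=\norm{C_t^*A^*-\sum M_{\varphi_{j,t}}}\to 0$ falls out of the already-proved one-sided estimate. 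A direct estimate of $\tilde{S}_t$ is possible by first replacing $A$ with a band operator $A_n$ (whose commutators $[A_n,M_{\varphi_{j,t}}]$ do have controlled output support) and handling $A-A_n$ separately, but that argument has to be spelled out; ``mirror construction'' does not cover it. Second, your choice ``let $P$ be whichever of $P_\alpha,P_\nu$ has smaller norm'' for \emph{both} sides is not available on the right: to pass from $AB_t^R$ to an operator on $A^p_\nu$ you need $A|_{A^p_\nu}PB_t^R|_{A^p_\nu}=PAB_t^R|_{A^p_\nu}$, which requires $P$ to slide past $A$; this holds for $P_\alpha$ (that is exactly what $[A,P_\alpha]=0$ is for) but not for $P_\nu$ when $\alpha\neq\nu$. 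The stated $\min$-bound is still correct, but it comes from noting that the left regularizer (which works with either projection) and the right regularizer (which must use $P_\alpha$) represent the \emph{same} element of the Calkin algebra, so the Calkin norm of $(A|_{A^p_\nu}+\Kc(A^p_\nu))^{-1}$ is dominated by the smaller of the two one-sided bounds.
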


\begin{proof}
Let the functions $\varphi_{j,t}$ be as above and $\epsilon > 0$. Then by Lemma \ref{lem0.5}, the series
\[B_t := \sum\limits_{j = j_0}^{\infty} M_{\psi_{j,t}}B_{j,t}M_{\varphi_{j,t}}\]
converges strongly with $\norm{B_t} \leq N^2M$. Multiplying by $A$, we obtain
\begin{align} \label{eq1}
B_tA = \sum\limits_{j = j_0}^{\infty} M_{\psi_{j,t}}B_{j,t}M_{\varphi_{j,t}}AM_{\psi_{j,t}} + \sum\limits_{j = j_0}^{\infty} M_{\psi_{j,t}}B_{j,t}M_{\varphi_{j,t}}AM_{1-\psi_{j,t}},
\end{align}
where the strong convergence of the two series on the right-hand side is again guaranteed by Lemma \ref{lem0.5}. As every $z \in \Omega$ belongs to at most $N$ of the sets $\supp \psi_{j,t}$, the same computation as in the proof of Lemma \ref{lem0.5} yields
\begin{align*}
\norm{\sum\limits_{j = j_0}^{\infty} M_{\psi_{j,t}}B_{j,t}M_{\varphi_{j,t}}AM_{1-\psi_{j,t}}f}^p &\leq N^p\sum\limits_{j = j_0}^{\infty} \norm{M_{\psi_{j,t}}B_{j,t}M_{\varphi_{j,t}}AM_{1-\psi_{j,t}}f}^p\\
&\leq N^pM^p\sum\limits_{j = j_0}^{\infty} \norm{M_{\varphi_{j,t}}AM_{1-\psi_{j,t}}f}^p
\end{align*}
for every $f \in L^p_{\nu}$. Therefore the second term in \eqref{eq1} tends to $0$ by Proposition \ref{BDO_characterization}. For the first term we further compute
\[\sum\limits_{j = j_0}^{\infty} M_{\psi_{j,t}}B_{j,t}M_{\varphi_{j,t}}AM_{\psi_{j,t}} = \sum\limits_{j = j_0}^{\infty} M_{\psi_{j,t}}B_{j,t}AM_{\varphi_{j,t}}M_{\psi_{j,t}} + \sum\limits_{j = j_0}^{\infty} M_{\psi_{j,t}}B_{j,t}[M_{\varphi_{j,t}},A]M_{\psi_{j,t}},\]
where the latter term tends to $0$ by Lemma \ref{lem0.5} and Proposition \ref{BDO_characterization2}. Furthermore, we have
\[\sum\limits_{j = j_0}^{\infty} M_{\psi_{j,t}}B_{j,t}AM_{\varphi_{j,t}}M_{\psi_{j,t}} = \sum\limits_{j = j_0}^{\infty} M_{\psi_{j,t}}B_{j,t}AM_{\psi_{j,t}}M_{\varphi_{j,t}} = \sum\limits_{j = j_0}^{\infty} M_{\psi_{j,t}}M_{\psi_{j,t}}M_{\varphi_{j,t}} = \sum\limits_{j = j_0}^{\infty} M_{\varphi_{j,t}}.\]
Combining all these estimates, we conclude 
\[\lim\limits_{t \to 0} \norm{B_tA - \sum\limits_{j = j_0}^{\infty} M_{\varphi_{j,t}}} = 0.\]
In particular, we have
\[\lim\limits_{t \to 0} \norm{P_{\nu}B_tA|_{A^p_{\nu}} - \sum\limits_{j = j_0}^{\infty} P_{\nu}M_{\varphi_{j,t}}|_{A^p_{\nu}}} \leq \lim\limits_{t \to 0} \norm{P_{\nu}}\norm{B_tA - \sum\limits_{j = j_0}^{\infty} M_{\varphi_{j,t}}} = 0.\]
Now as the functions $\varphi_{j,t}$ have compact support, the operators $P_{\nu}M_{\varphi_{j,t}}|_{A^p_{\nu}}$ are compact by Proposition \ref{prop2.5} and hence $\sum\limits_{j = j_0}^{\infty} P_{\nu}M_{\varphi_{j,t}}|_{A^p_{\nu}} \in I + \Kc(A^p_{\nu})$ for every $t > 0$. We deduce that $P_{\nu}B_tA|_{A^p_{\nu}} + \Kc(A^p_{\nu})$ converges to $I + \Kc(A^p_{\nu})$. By a Neumann series argument, this implies that there exists a $B \in \Lc(A^p_{\nu})$ such that $BA|_{A^p_{\nu}} \in I + \Kc(A^p_{\nu})$ and 
\[\norm{B + \Kc(A^p_{\nu})} \leq 2\norm{P_{\nu}}\norm{B_t} \leq 2\norm{P_{\nu}}N^2M.\]

As $A^* \in \BDO^q_{\nu}$, we can apply the above to $A^*$ to obtain
\[\lim\limits_{t \to 0} \norm{AC_t - \sum\limits_{j = j_0}^{\infty} M_{\varphi_{j,t}}} = \lim\limits_{t \to 0} \norm{C_t^*A^* - \sum\limits_{j = j_0}^{\infty} M_{\varphi_{j,t}}} = 0\]
for
\[C_t := \sum\limits_{j = j_0}^{\infty} M_{\varphi_{j,t}}C_{j,t}M_{\psi_{j,t}}.\]
This implies
\[\lim\limits_{t \to 0} \norm{AP_{\alpha}C_t|_{A^p_{\nu}} - \sum\limits_{j = j_0}^{\infty} P_{\alpha}M_{\varphi_{j,t}}|_{A^p_{\nu}}} \leq \lim\limits_{t \to 0} \norm{P_{\alpha}}\norm{AC_t - \sum\limits_{j = j_0}^{\infty} M_{\varphi_{j,t}}} = 0\]
because $[A,P_{\alpha}] = 0$. Now we can precede as above to obtain an operator $C \in \Lc(A^p_{\nu})$ with 
\[\norm{C + \Kc(A^p_{\nu})} \leq 2\norm{P_{\alpha}}N^2M\]
such that $A|_{A^p_{\nu}}C \in I + \Kc(A^p_{\nu})$.
\end{proof}

\section{Limit operators (unit ball)} \label{Limit_operators}

From this section onwards we focus on the case of the unit ball $\Omega = \B^n$. The corresponding results are expected to hold for general bounded symmetric domains as well, but need some more preparations. These are postponed to future work.

As we expect the Fredholm information to be located at the boundary, we consider the following shift operators\footnote{Strictly speaking, they are rather reflections than shifts, but they serve the purpose of ``shifting'' operators to the boundary as $z \to \partial\B^n$.}. Let $U_z^p \from L^p_{\nu} \to L^p_{\nu}$ be defined by
\[(U_z^pf)(w) = f(\phi_z(w))\frac{(1 - \abs{z}^2)^{\frac{\nu+n+1}{p}}}{(1 - \sp{w}{z})^{\frac{2(\nu+n+1)}{p}}}.\]
Using the standard identities mentioned in Section \ref{Notation}, one obtains that $U_z^p$ is a surjective isometry with $(U_z^p)^2 = I$. In particular, $(U_z^q)^*$ is also an isometry. Moreover, it holds $U_z^p(A^p_{\nu}) \subseteq A^p_{\nu}$. However, note that $(U_z^q)^*(A^p_{\nu}) \not\subseteq A^p_{\nu}$ in general so that we have to distinguish between $(U_z^q)^*|_{A^p_{\nu}}$ and $(U_z^q|_{A^q_{\nu}})^* = P_{\nu}(U_z^q)^*|_{A^p_{\nu}}$.

If $A \in \Tf_{p,\nu}$ and $(z_{\gamma})$ is a net in $\B^n$ converging to $x \in \Mf$, the maximal ideal space of $\BUC(\B^n)$ considered as a compactification of $\B^n$ (see \cite[Section 4]{MiSuWi} for a discussion), then $U_{z_{\gamma}}^pA(U_{z_{\gamma}}^q|_{A^q_{\nu}})^*$ converges $*$-strongly in $\Lc(A^p_{\nu})$ (see \cite[Proposition 4.11]{MiSuWi}) and the limit is denoted by $A_x$. If $x$ is located at the boundary $\Mf \setminus \B^n$, we will call the operator $A_x$ a limit operator of $A$, which is in accordance with the sequence space case (\cite{HaLiSe, Lindner, LiSe, RaRoSi, RaRoSi2, Seidel, SpaWi} and the references therein). Note that $\Kc(A^p_{\nu}) \subset \Tf_{p,\nu}$ and $K_x = 0$ for all $x \in \Mf \setminus \B^n$ and $K \in \Kc(A^p_{\nu})$ by \cite[Proposition 4.12, Theorem 5.5]{MiSuWi}.

The set of all limit operators $\set{A_x : x \in \Mf \setminus \B^n}$ is sometimes called the operator spectrum of $A$ because it shares some properties with the usual spectrum, e.g.~some kind of compactness (see Proposition \ref{prop7} below). Note that the operator $A_x$ does not depend on the net $(z_{\gamma})$ converging to $x \in \Mf$ (but of course on the element $x \in \Mf$). Let $b_z \from \Omega \to \C$ be given by
\[b_z(w) := \frac{(1 - \sp{z}{w})^{(\nu+n+1)(1/q-1/p)}}{(1 - \sp{w}{z})^{(\nu+n+1)(1/q-1/p)}}.\]
Then $T_{b_z} = (U_z^q|_{A^q_{\nu}})^*U_z^p|_{A^p_{\nu}}$ is invertible with $T_{b_z}^{-1} = U_z^p(U_z^q|_{A^q_{\nu}})^*$ for all $z \in \B^n$. Moreover, as $z_{\gamma} \to x$ the net $(T_{b_{z_{\gamma}}})$ converges $*$-strongly to another Toeplitz operator, which is denoted by $T_{b_x}$. $T_{b_x}$ is again invertible and $T_{b_{z_{\gamma}}}^{-1} \to T_{b_x}^{-1}$ (see \cite[Lemma 4.10]{MiSuWi}). As we will need it frequently, let us fix the strong continuity in a proposition.

\begin{prop} \label{prop7}
For all $A \in \Tf_{p,\nu}$ the two maps $A_{\bullet} \from \Mf \to \Lc(A^p_{\nu})$, $x \mapsto A_x$ and $T_{b_{\bullet}} \from \Mf \to \Lc(A^p_{\nu})$, $x \mapsto T_{b_x}$ are bounded and continuous w.r.t.~the strong operator topology. In particular, the two sets $\set{A_xT_{b_x} : x \in \Mf}$ and $\set{A_xT_{b_x} : x \in \Mf \setminus \B^n}$ are strongly compact.
\end{prop}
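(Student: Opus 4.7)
The plan is to reduce both continuity statements, via the defining $*$-strong convergences $U_{z_\gamma}^p A (U_{z_\gamma}^q|_{A^q_\nu})^* \to A_x$ and $T_{b_{z_\gamma}} \to T_{b_x}$ as $z_\gamma \to x$ (cited from \cite[Proposition 4.11, Lemma 4.10]{MiSuWi}), to a standard diagonal argument exploiting the density of $\B^n$ in $\Mf$.

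For the uniform bound, since $U_z^p$ is a surjective isometry on $L^p_\nu$ preserving $A^p_\nu$ and $(U_z^q|_{A^q_\nu})^*$ is an isometry on $A^p_\nu$, we have $\norm{U_z^p A (U_z^q|_{A^q_\nu})^*} \leq \norm{A}$ for every $z \in \B^n$, and lower semicontinuity of the norm under SOT limits gives $\norm{A_x} \leq \norm{A}$ for every $x \in \Mf$. Boundedness of $x \mapsto T_{b_x}$ follows from the explicit formula for $b_z$ (its modulus is uniformly bounded in $z \in \B^n$) together with $\norm{T_{b_z}} \leq \norm{P_\nu}\norm{b_z}_\infty$.

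For SOT-continuity of $A_\bullet$, I fix $f \in A^p_\nu$ and a net $(x_\alpha)$ in $\Mf$ converging to $x$. Using the definition of $A_{x_\alpha}$ as a $*$-strong limit over any net in $\B^n$ converging to $x_\alpha$, together with the density $\B^n \subset \Mf$, I pick for each $\alpha$ a point $z_\alpha \in \B^n$ so close to $x_\alpha$ that the error $\norm{U_{z_\alpha}^p A (U_{z_\alpha}^q|_{A^q_\nu})^* f - A_{x_\alpha} f}$ is arbitrarily small, while simultaneously arranging (by reindexing over pairs consisting of $\alpha$ and a neighborhood basis element of $x_\alpha$) that the refined net $(z_\alpha)$ converges to $x$ in $\Mf$. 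Then $U_{z_\alpha}^p A (U_{z_\alpha}^q|_{A^q_\nu})^* f \to A_x f$ by the defining convergence at $x$, and the triangle inequality forces $A_{x_\alpha} f \to A_x f$. The identical argument gives SOT-continuity of $T_{b_\bullet}$.

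Finally, since $\norm{A_x}$ is uniformly bounded, SOT-continuity of both factors transfers to the product $x \mapsto A_x T_{b_x}$ by a standard $\epsilon/2$ estimate. Because $\Mf$ is compact and $\Mf \setminus \B^n$ is closed in $\Mf$ ($\B^n$ being an open dense subset of its $\BUC$-compactification), both indexed sets are continuous images of compact sets under an SOT-continuous map, hence strongly compact. The one technical delicacy is the diagonal reindexing needed to upgrade the pointwise defining $*$-strong convergences into an actual net $(z_\alpha)$ in $\B^n$ converging to $x$ in $\Mf$; everything else is a routine assembly from the cited Mitkovski–Suárez–Wick convergence results and elementary properties of the strong operator topology.
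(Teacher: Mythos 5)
The paper's proof of this proposition is a one-line citation to \cite[Proposition 4.11]{MiSuWi}, so your self-contained reconstruction is genuinely a different route. The overall architecture you propose --- uniform bound, SOT-continuity of each factor via a density/diagonal argument, transfer to the product via an $\epsilon/2$ estimate, and compactness of the image of the compact sets $\Mf$ and $\Mf\setminus\B^n$ --- is sound. Two details deserve tightening.

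First, $(U_z^q|_{A^q_\nu})^*$ is \emph{not} an isometry on $A^p_\nu$ in general. As the paper points out, $(U_z^q|_{A^q_\nu})^* = P_\nu (U_z^q)^*|_{A^p_\nu}$, and only the full adjoint $(U_z^q)^*$ on $L^p_\nu$ is isometric. Consequently the correct uniform bound is $\norm{A_x}\le\norm{P_\nu}\norm{A}$ rather than $\norm{A_x}\le\norm{A}$. This does not damage the argument, since uniform boundedness is all you need, but the isometry claim as stated is false unless $p=2$.

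Second, the diagonal reindexing as you describe it --- over pairs $(\alpha, V)$ with $V$ a neighborhood basis element of $x_\alpha$ --- does not obviously form a directed set (the $V$'s are neighborhoods of different and varying points, so there is no natural comparison between them) and does not cleanly force $z_\alpha\to x$. The repair is to reindex over pairs $(\alpha,U)$ where $U$ is an open neighborhood of the \emph{target} $x$ with $x_\alpha\in U$, ordered by $(\alpha,U)\geq(\alpha',U')$ iff $\alpha\geq\alpha'$ and $U\subseteq U'$; this set is directed because $x_\alpha\to x$, and for each such pair one can choose $z_{(\alpha,U)}\in U\cap\B^n$ with $\norm{U_{z_{(\alpha,U)}}^p A (U_{z_{(\alpha,U)}}^q|_{A^q_\nu})^* f - A_{x_\alpha} f}$ small (using that $U$ is also a neighborhood of $x_\alpha$), which simultaneously guarantees $z_{(\alpha,U)}\to x$. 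With that correction the diagonal step goes through and the rest of your assembly is fine. What your approach buys, compared with the paper's appeal to Mitkovski--Su\'arez--Wick, is a self-contained deduction of continuity of $A_\bullet$ on all of $\Mf$ from the bare existence of the net limits $A_x=\lim_{z_\gamma\to x}A_{z_\gamma}$; what it costs is precisely the care needed in the reindexing, which the cited reference presumably already handles internally.
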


\begin{proof}
This follows directly from \cite[Proposition 4.11]{MiSuWi}.
\end{proof}

So the aim now is to shift a Toeplitz operator $A$ to the boundary to obtain limit operators $A_x$ and then retrieve information about $A$ via Proposition \ref{prop3}. Here is our first step:

\begin{prop} \label{prop4}
Let $p \leq 2$, $\alpha = (\frac{2}{p} - 1)(n+1) + \frac{2\nu}{p}$, $A \in \Tf_{p,\nu}$ and let $(z_{\gamma})$ be a net in $\B^n$ converging to $x \in \Mf \setminus \B^n$ such that $A_x$ is invertible. Then for every real-valued $\xi \in L^{\infty}(\B^n)$ with compact support there is a $\gamma_0$ such that for all $\gamma \geq \gamma_0$ there are operators $B_\gamma,C_{\gamma} \in \Lc(L^p_{\nu})$ with $\norm{B_\gamma}, \norm{C_{\gamma}} \leq 2\left(\norm{A_x^{-1}}\norm{P_{\alpha}} + \norm{Q_{\alpha}}\right)$ and
\[B_\gamma\hat{A}M_{\xi \circ \phi_{z_{\gamma}}} = M_{\xi \circ \phi_{z_{\gamma}}} = M_{\xi \circ \phi_{z_{\gamma}}}\hat{A}C_\gamma.\]
\end{prop}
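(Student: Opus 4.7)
The plan is to conjugate the problem to the origin by the isometry $U^p_{z_\gamma}$ (using $(U^p_{z_\gamma})^2 = I$) and then correct an approximate inverse, built from $(A_x T_{b_x})^{-1}$, by a Neumann series.

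The key preliminary observation is that the specific $\alpha = (2/p-1)(n+1) + 2\nu/p$ is exactly the value making $P_\alpha U^p_z = U^p_z P_\alpha$ on $L^p_\nu$; this is verified by computing both sides as integral operators with the transformation formulas for $h$ and $dv_\alpha$ and noting that the kernels agree iff the ``mismatch'' exponent $\alpha + g - 2\eta_p$ vanishes, where $\eta_p := (\nu+n+1)/p$ and $g := n+1$. Hence $U^p_z$ also commutes with $Q_\alpha$. Setting $\hat A_\gamma := U^p_{z_\gamma}\hat A U^p_{z_\gamma} = \tilde A_\gamma P_\alpha + Q_\alpha$ with $\tilde A_\gamma := U^p_{z_\gamma}A U^p_{z_\gamma}|_{A^p_\nu}$, and using $U^p_{z_\gamma} M_{\xi\circ\phi_{z_\gamma}} U^p_{z_\gamma} = M_\xi$, the claimed relations for $B_\gamma, C_\gamma$ are equivalent to
\[\tilde B_\gamma \hat A_\gamma M_\xi = M_\xi, \qquad M_\xi \hat A_\gamma \tilde C_\gamma = M_\xi\]
for $\tilde B_\gamma := U^p_{z_\gamma} B_\gamma U^p_{z_\gamma}$ and $\tilde C_\gamma := U^p_{z_\gamma} C_\gamma U^p_{z_\gamma}$. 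From $T_{b_z}^{-1} = U^p_z (U^q_z|_{A^q_\nu})^*$ and the $*$-strong convergence $U^p_{z_\gamma} A (U^q_{z_\gamma}|_{A^q_\nu})^* \to A_x$ (Proposition \ref{prop7}), one gets $\tilde A_\gamma \to A_x T_{b_x}$ $*$-strongly on $A^p_\nu$; the limit is invertible with $\|(A_x T_{b_x})^{-1}\| \leq \|A_x^{-1}\|$ since $T_{b_x}$ is isometric.

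Define $L := (A_x T_{b_x})^{-1} P_\alpha + Q_\alpha \in \Lc(L^p_\nu)$, the two-sided inverse of $(A_x T_{b_x}) P_\alpha + Q_\alpha$, with $\|L\| \leq \|A_x^{-1}\|\|P_\alpha\| + \|Q_\alpha\|$. Using $P_\alpha \tilde A_\gamma = \tilde A_\gamma$, $Q_\alpha \tilde A_\gamma = 0$ and $P_\alpha Q_\alpha = 0$, a direct computation yields
\[L \hat A_\gamma M_\xi = M_\xi + \bigl[(A_x T_{b_x})^{-1} \tilde A_\gamma - I\bigr] P_\alpha M_\xi\]
and symmetrically $M_\xi \hat A_\gamma L = M_\xi + M_\xi\bigl[\tilde A_\gamma (A_x T_{b_x})^{-1} - I\bigr] P_\alpha$. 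Choose $\tilde\xi \in L^\infty(\B^n)$ with compact support and $\tilde\xi = 1$ on $\supp\xi$, so that $M_\xi = M_{\tilde\xi} M_\xi = M_\xi M_{\tilde\xi}$; then the two errors factor as $Q^B_\gamma M_\xi$ and $M_\xi Q^C_\gamma$ with $Q^B_\gamma := \bigl[(A_x T_{b_x})^{-1} \tilde A_\gamma - I\bigr] P_\alpha M_{\tilde\xi}$ and $Q^C_\gamma := M_{\tilde\xi}\bigl[\tilde A_\gamma (A_x T_{b_x})^{-1} - I\bigr] P_\alpha$.

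The crucial technical step is $\|Q^B_\gamma\|, \|Q^C_\gamma\| \to 0$. Both $P_\alpha M_{\tilde\xi}$ and $M_{\tilde\xi} P_\alpha$ are compact by Proposition \ref{prop2.5} (write $M_{\tilde\xi} = M_{\chi_D}M_{\tilde\xi} = M_{\tilde\xi}M_{\chi_D}$ for a compact $D \supset \supp\tilde\xi$). For $Q^B_\gamma$ the compact factor is on the right of the strongly null factor $(A_x T_{b_x})^{-1}\tilde A_\gamma - I$, so strong convergence alone suffices (compact maps the unit ball into a relatively compact set on which strong convergence is uniform). For $Q^C_\gamma$ the compact factor is on the left, so one passes to adjoints: $(Q^C_\gamma)^* = P_\alpha^*\bigl[\tilde A_\gamma (A_x T_{b_x})^{-1} - I\bigr]^* M_{\tilde\xi}^*$, and now the compact $M_{\tilde\xi}^*$ sits on the right of the adjoint, which is strongly null precisely by the $*$-strong convergence of $\tilde A_\gamma$. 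Choosing $\gamma_0$ so that $\|Q^B_\gamma\|,\|Q^C_\gamma\| \leq 1/2$ for $\gamma \geq \gamma_0$, Neumann series gives $\|(I+Q^B_\gamma)^{-1}\|,\|(I+Q^C_\gamma)^{-1}\| \leq 2$; then $\tilde B_\gamma := (I+Q^B_\gamma)^{-1}L$ and $\tilde C_\gamma := L(I+Q^C_\gamma)^{-1}$ satisfy the required identities with $\|\tilde B_\gamma\|, \|\tilde C_\gamma\| \leq 2\|L\| \leq 2(\|A_x^{-1}\|\|P_\alpha\|+\|Q_\alpha\|)$. Finally $B_\gamma := U^p_{z_\gamma}\tilde B_\gamma U^p_{z_\gamma}$ and $C_\gamma := U^p_{z_\gamma}\tilde C_\gamma U^p_{z_\gamma}$ preserve this bound by isometric conjugation. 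The main obstacle is exactly the norm convergence of $Q^C_\gamma$, where the $*$-strong (rather than mere strong) convergence of $\tilde A_\gamma$ is indispensable.
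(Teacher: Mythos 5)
Your proof is correct and follows the same strategy as the paper's: conjugate by $U^p_{z_\gamma}$ using $P_\alpha U^p_z = U^p_z P_\alpha$ and $U^p_z M_\xi U^p_z = M_{\xi\circ\phi_z}$, approximate $\hat A_\gamma := U^p_{z_\gamma}\hat A U^p_{z_\gamma}$ by $A_xT_{b_x}P_\alpha + Q_\alpha$ whose inverse is $L = T_{b_x}^{-1}A_x^{-1}P_\alpha + Q_\alpha$, and correct by a Neumann series whose error term $L(\hat A_\gamma - \hat A_x)M_{\tilde\xi}$ (resp.\ $M_{\tilde\xi}(\hat A_\gamma - \hat A_x)L$) tends to $0$ in norm because $P_\alpha M_{\tilde\xi}$ (resp.\ $M_{\tilde\xi}P_\alpha$) is compact by Proposition~\ref{prop2.5} and $\tilde A_\gamma \to A_xT_{b_x}$ $*$-strongly (Proposition~\ref{prop7}); your $Q^B_\gamma, Q^C_\gamma, \tilde\xi$ play exactly the roles of the paper's $R_\gamma, S_\gamma, \chi_{B_R}$, and your remark that passing to adjoints is needed for $Q^C_\gamma$ makes explicit what the paper relies on implicitly. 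One small caveat, shared with the paper's own statement: your justification ``$\|(A_xT_{b_x})^{-1}\| \le \|A_x^{-1}\|$ since $T_{b_x}$ is isometric'' is not quite right, since $T_{b_{z}}^{-1} = U_z^p(U_z^q|_{A^q_\nu})^* = U_z^p P_\nu(U_z^q)^*|_{A^p_\nu}$ only gives $\|T_{b_x}^{-1}\|\le\|P_\nu\|$ (with equality $1$ only when $p=2$), so strictly the bound should carry an extra factor of $\|P_\nu\|$; this does not affect any subsequent use of the proposition, which only needs a uniform bound.
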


\begin{proof}
First note that $p \leq 2$ implies $\alpha \geq \nu$ and hence $(\alpha,\nu,p)$ is admissible (cf.~Corollary \ref{BoundedProjectionsCor}). Setting $h(w,z) := 1 - \sp{w}{z}$ and $g := n+1$ and using the standard transformation identities, we observe
\begin{align*}
(U_z^pM_{\xi}U_z^pf)(w) &= \frac{h(z,z)^{\frac{\nu+g}{p}}}{h(w,z)^{\frac{2(\nu+g)}{p}}}(M_{\xi}U_z^pf)(\phi_z(w))\\
&= \frac{h(z,z)^{\frac{\nu+g}{p}}}{h(w,z)^{\frac{2(\nu+g)}{p}}}\xi(\phi_z(w))(U_z^pf)(\phi_z(w))\\
&= \xi(\phi_z(w))\frac{h(z,z)^{\frac{\nu+g}{p}}}{h(w,z)^{\frac{2(\nu+g)}{p}}}\frac{h(z,z)^{\frac{\nu+g}{p}}}{h(\phi_z(w),z)^{\frac{2(\nu+g)}{p}}}f(w)\\
&= \xi(\phi_z(w))\frac{h(z,z)^{\frac{2(\nu+g)}{p}}}{h(w,z)^{\frac{2(\nu+g)}{p}}}\frac{h(w,z)^{\frac{2(\nu+g)}{p}}}{h(z,z)^{\frac{2(\nu+g)}{p}}}f(w)\\
&= \xi(\phi_z(w))f(w)
\end{align*}
so that
\begin{align} \label{eq2}
U_z^pM_{\xi}U_z^p = M_{\xi \circ \phi_z}.
\end{align}

The special value we chose for $\alpha$ also ensures that $P_{\alpha}U_z^p = U_z^pP_{\alpha}$ for all $z \in \B^n$. Indeed,
\begin{align} \label{eq3}
(P_{\alpha}U_z^pf)(x) &= \int_{\B^n} f(\phi_z(w))\frac{h(z,z)^{\frac{\nu+g}{p}}}{h(w,z)^{\frac{2(\nu+g)}{p}}} h(x,w)^{-\alpha-g}\, \mathrm{d}v_{\alpha}(w)\notag\\
&= \int_{\B^n} f(y)\frac{h(z,z)^{\frac{\nu+g}{p}}}{h(\phi_z(y),z)^{\frac{2(\nu+g)}{p}}} h(x,\phi_z(y))^{-\alpha-g} \frac{h(z,z)^{\alpha+g}}{\abs{h(y,z)}^{2(\alpha+g)}} \, \mathrm{d}v_{\alpha}(y)\notag\\
&= \int_{\B^n} f(y)\frac{h(z,z)^{\frac{\nu+g}{p}}h(y,z)^{\frac{2(\nu+g)}{p}}}{h(z,z)^{\frac{2(\nu+g)}{p}}} \frac{h(x,z)^{-\alpha-g}h(\phi_z(x),y)^{-\alpha-g}}{h(z,y)^{-\alpha-g}} \frac{h(z,z)^{\alpha+g}}{\abs{h(y,z)}^{2(\alpha+g)}} \, \mathrm{d}v_{\alpha}(y)\notag\\
&= \frac{h(z,z)^{-\frac{\nu+g}{p}+\alpha+g}}{h(x,z)^{\alpha+g}}\int_{\B^n} f(y)h(y,z)^{\frac{2(\nu+g)}{p}-\alpha-g} h(\phi_z(x),y)^{-\alpha-g} \, \mathrm{d}v_{\alpha}(y)\notag\\
&= \frac{h(z,z)^{\frac{\nu+g}{p}}}{h(x,z)^{\frac{2(\nu+g)}{p}}}\int_{\B^n} f(y)h(\phi_z(x),y)^{-\alpha-g} \, \mathrm{d}v_{\alpha}(y)\notag\\
&= (U_z^pP_{\alpha}f)(x).
\end{align}

Let $B_R := \set{z \in \B^n : \abs{z} \leq R}$, where $R < 1$ is chosen sufficiently large such that $\supp \xi \subseteq B_R$. By Proposition \ref{prop7}, $U_{z_{\gamma}}^pAU_{z_{\gamma}}^p = U_{z_{\gamma}}^pA(U_{z_{\gamma}}^q|_{A^q})^*(U_{z_{\gamma}}^q|_{A^q})^*U_{z_{\gamma}}^p = A_{z_{\gamma}}T_{b_{\gamma}}$ converges $*$-strongly to $A_xT_{b_x}$. Moreover, the operator $P_{\alpha}M_{\chi_{B_R}}$ is compact by Proposition \ref{prop2.5}. Combining these facts and using Equation \eqref{eq3}, we get
\[\norm{\left(U_{z_{\gamma}}^p(AP_{\alpha} + Q_{\alpha})U_{z_{\gamma}}^p - (A_xT_{b_x}P_{\alpha} + Q_{\alpha})\right)M_{\chi_{B_R}}} = \norm{\left(U_{z_{\gamma}}^pAU_{z_{\gamma}}^p - A_xT_{b_x}\right)P_{\alpha}M_{\chi_{B_R}}}  \to 0\]
as $z_{\gamma} \to x$. $A_xT_{b_x}P_{\alpha} + Q_{\alpha}$ is invertible with
\[(A_xT_{b_x}P_{\alpha} + Q_{\alpha})^{-1} = T_{b_x}^{-1}A_x^{-1}P_{\alpha} + Q_{\alpha}.\]
This implies that there exists a $\gamma_0$ such that
\[R_{\gamma} := (A_xT_{b_x}P_{\alpha} + Q_{\alpha})^{-1}\left(U_{z_{\gamma}}^p(AP_{\alpha} + Q_{\alpha})U_{z_{\gamma}}^p - (A_xT_{b_x}P_{\alpha} + Q_{\alpha})\right)M_{\chi_{B_R}}\]
satisfies $\norm{R_{\gamma}} < \frac{1}{2}$ for all $\gamma \geq \gamma_0$. In particular, $I + R_{\gamma} \in \Lc(L^p_{\nu})$ is invertible for all $\gamma \geq \gamma_0$. 

We then have
\[U_{z_{\gamma}}^p(AP_{\alpha} + Q_{\alpha})U_{z_{\gamma}}^pM_{\chi_{B_R}} = (A_xT_{b_x}P_{\alpha} + Q_{\alpha})M_{\chi_{B_R}} + (A_xT_{b_x}P_{\alpha} + Q_{\alpha})R_{\gamma}\]
and therefore
\[(A_xT_{b_x}P_{\alpha} + Q_{\alpha})^{-1}U_{z_{\gamma}}^p(AP_{\alpha} + Q_{\alpha})U_{z_{\gamma}}^pM_{\xi} = M_{\xi} + R_{\gamma}M_{\xi} = (I + R_{\gamma})M_{\xi},\]
which implies
\[(I + R_{\gamma})^{-1}(A_xT_{b_x}P_{\alpha} + Q_{\alpha})^{-1}U_{z_{\gamma}}^p(AP_{\alpha} + Q_{\alpha})U_{z_{\gamma}}^pM_{\xi} = M_{\xi}.\]
Applying $U_{z_{\gamma}}^p$ from both sides and using \eqref{eq2} yields
\[U_{z_{\gamma}}^p(I + R_{\gamma})^{-1}(A_xT_{b_x}P_{\alpha} + Q_{\alpha})^{-1}U_{z_{\gamma}}^p(AP_{\alpha} + Q_{\alpha})M_{\xi \circ \phi_{z_{\gamma}}} = M_{\xi \circ \phi_{z_{\gamma}}}\]
and the first assertion follows. For the second assertion note that $M_{\chi_{B_R}}P_{\alpha}$ is compact as well (see Proposition \ref{prop2.5}). Thus
\[\norm{M_{\chi_{B_R}}\left(U_{z_{\gamma}}^p(AP_{\alpha} + Q_{\alpha})U_{z_{\gamma}}^p - (A_xT_{b_x}P_{\alpha} + Q_{\alpha})\right)} = \norm{M_{\chi_{B_R}}P_{\alpha}\left(U_{z_{\gamma}}^pAU_{z_{\gamma}}^p - A_xT_{b_x}\right)P_{\alpha}} \to 0\]
and we obtain
\[M_{\xi \circ \phi_{z_{\gamma}}}(AP_{\alpha} + Q_{\alpha})U_{z_{\gamma}}^p(A_xT_{b_x}P_{\alpha} + Q_{\alpha})^{-1}(I + S_{\gamma})^{-1}U_{z_{\gamma}}^p = M_{\xi \circ \phi_{z_{\gamma}}}\]
for sufficiently large $\gamma$ and
\[S_{\gamma} := M_{\chi_{B_R}}\left(U_{z_{\gamma}}^p(AP_{\alpha} + Q_{\alpha})U_{z_{\gamma}}^p - (A_xT_{b_x}P_{\alpha} + Q_{\alpha})\right)(A_xT_{b_x}P_{\alpha} + Q_{\alpha})^{-1}.\qedhere\]
\end{proof}

Combining Proposition \ref{prop4} with Proposition \ref{prop3}, we obtain the following theorem.

\begin{thm} \label{thm1}
Let $A \in \Tf_{p,\nu}$. If $A_x$ is invertible for every $x \in \Mf \setminus \B^n$ and $\sup\limits_{x \in \Mf \setminus \B^n} \norm{A_x^{-1}} < \infty$, then $A$ is Fredholm.
\end{thm}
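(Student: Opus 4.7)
The approach is to invoke Proposition \ref{prop3} for the extension $\hat{A} := AP_\alpha + Q_\alpha$, which lies in $\BDO^p_\nu$ by Theorem \ref{thm0} and satisfies $[\hat{A}, P_\alpha] = 0$ by construction. Since $P_\alpha$ acts as the identity on $A^p_\nu$ and $Q_\alpha$ as $0$, the resulting Fredholmness of $\hat{A}|_{A^p_\nu} = A|_{A^p_\nu}$ is exactly what is needed. I will carry this out under the assumption $p \leq 2$, taking $\alpha := (\tfrac{2}{p} - 1)(n+1) + \tfrac{2\nu}{p}$, which forces $\alpha \geq \nu$ (so $(\alpha,\nu,p)$ is admissible) and is precisely the value required for Proposition \ref{prop4}. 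The case $p > 2$ should follow by a duality argument, passing to adjoints inside $\BDO^q_\nu$ using Proposition \ref{prop0}$(v)$.

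Write $M := \sup_{x \in \Mf \setminus \B^n}\norm{A_x^{-1}} < \infty$ and $M' := 2(M\norm{P_\alpha} + \norm{Q_\alpha})$. The goal is to produce, for each fixed $t \in (0,1)$, a threshold $j_0$ such that for every $j \geq j_0$ there exist $B_{j,t}, C_{j,t}$ of norm at most $M'$ with
\[
B_{j,t}\hat{A}M_{\psi_{j,t}} = M_{\psi_{j,t}} = M_{\psi_{j,t}}\hat{A}C_{j,t}.
\]
To match the hypothesis of Proposition \ref{prop4}, pick any $z_{j,t} \in B_{j,t}$ and set $\xi_{j,t} := \psi_{j,t} \circ \phi_{z_{j,t}}$. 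Using $\phi_{z_{j,t}}^2 = \operatorname{id}$ one has $\xi_{j,t} \circ \phi_{z_{j,t}} = \psi_{j,t}$; moreover, since $\phi_{z_{j,t}}$ is a Bergman isometry sending $z_{j,t}$ to $0$, the set $\supp \xi_{j,t} = \phi_{z_{j,t}}(\Xi_{j,t,3})$ sits inside the Bergman ball $D(0, C(t) + \tfrac{2}{t})$ and hence inside a fixed Euclidean compact $B_R \subset \B^n$, uniformly in $j$.

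Assume, for contradiction, that no such $j_0$ exists. Then there is a sequence $j_k \to \infty$ for which no admissible $B_{j_k,t}, C_{j_k,t}$ of norm $\leq M'$ exists. By Lemma \ref{lem2}$(ii)$ every Bergman-bounded set meets only finitely many $B_{j,t}$, so $\beta(0, z_{j_k,t}) \to \infty$ and, by compactness of $\Mf$, a subnet $z_{j_k,t} \to x$ for some $x \in \Mf \setminus \B^n$. I now reuse the proof of Proposition \ref{prop4}: its crucial input
\[
\norm{\bigl(U_{z_\gamma}^p \hat{A} U_{z_\gamma}^p - (A_x T_{b_x} P_\alpha + Q_\alpha)\bigr)\, P_\alpha M_{\chi_{B_R}}} \to 0
\]
rests only on the $*$-strong convergence $U_{z_\gamma}^p A U_{z_\gamma}^p \to A_x T_{b_x}$ of Proposition \ref{prop7} combined with the compactness of $P_\alpha M_{\chi_{B_R}}$ (Proposition \ref{prop2.5}), and the analogous estimate holds on the other side using $M_{\chi_{B_R}} P_\alpha$. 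In particular, the threshold in Proposition \ref{prop4} depends only on $x$ and $B_R$, not on the specific cut-off supported in $B_R$. Consequently, for all sufficiently large $k$ the proof of Proposition \ref{prop4} produces $B_k, C_k$ of norm at most $2(\norm{A_x^{-1}}\norm{P_\alpha} + \norm{Q_\alpha}) \leq M'$ with $B_k \hat{A} M_{\psi_{j_k,t}} = M_{\psi_{j_k,t}} = M_{\psi_{j_k,t}} \hat{A} C_k$, contradicting the choice of $j_k$. Proposition \ref{prop3} then yields the Fredholmness of $A|_{A^p_\nu}$.

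The main obstacle is precisely this uniform-in-$\xi$ strengthening of Proposition \ref{prop4}: the contradiction argument cannot work unless the constants can be controlled as $\xi$ varies with $j$. The key observation that unlocks this is that the cut-off $\xi$ enters the proof of Proposition \ref{prop4} only through the ambient multiplier $\chi_{B_R}$, whose compactness against $P_\alpha$ absorbs all the $\xi$-dependence. The second subtlety is the need to extract a convergent subnet of $(z_{j_k,t})$ inside the compactification $\Mf$, but this is handled by the abstract compactness of $\Mf$ together with the Bergman-escape established via Lemma \ref{lem2}$(ii)$.
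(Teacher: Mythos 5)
Your proof is correct and follows essentially the same route as the paper's: reduce to $p\le 2$ by duality, fix $\alpha=(\tfrac{2}{p}-1)(n+1)+\tfrac{2\nu}{p}$ so that Proposition \ref{prop4} is available and $(\alpha,\nu,p)$ is admissible, note $[\hat{A},P_\alpha]=0$, and aim to verify the hypothesis of Proposition \ref{prop3} by extracting a cluster point $x\in\Mf\setminus\B^n$ of suitable Bergman-metric centres of the supports of $\psi_{j,t}$. The single place where you deviate is in reconciling the fixed cut-off $\xi$ in Proposition \ref{prop4} with the $j$-dependent $\psi_{j,t}$. The paper applies Proposition \ref{prop4} as a black box with the \emph{fixed} cut-off $\xi=\chi_{D(0,R)}$, obtains $B_\gamma\hat{A}M_{\chi_{D(w_{m_\gamma},R)}}=M_{\chi_{D(w_{m_\gamma},R)}}$, and then simply multiplies by $M_{\psi_{j_{m_\gamma},t}}$ on the right, using $\supp\psi_{j_{m_\gamma},t}\subseteq D(w_{m_\gamma},R)$ so that $M_{\chi_{D(w_{m_\gamma},R)}}M_{\psi_{j_{m_\gamma},t}}=M_{\psi_{j_{m_\gamma},t}}$. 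You instead feed in the $j$-varying cut-off $\xi_{j,t}=\psi_{j,t}\circ\phi_{z_{j,t}}$ and re-open the proof of Proposition \ref{prop4} to argue that the threshold $\gamma_0$ and the bound $2(\norm{A_x^{-1}}\norm{P_\alpha}+\norm{Q_\alpha})$ are uniform over cut-offs supported in a fixed compact $B_R$. Your uniformity observation is correct: the operator $R_\gamma$ whose smallness drives that proof contains only $M_{\chi_{B_R}}$ and the Neumann regularizer $U^p_{z_\gamma}(I+R_\gamma)^{-1}(A_xT_{b_x}P_\alpha+Q_\alpha)^{-1}U^p_{z_\gamma}$ does not depend on $\xi$ at all. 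So both variants are valid; the paper's multiplication trick has the small advantage that Proposition \ref{prop4} can be cited as stated, without inspecting its proof.
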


\begin{proof}
W.l.o.g.~we may assume that $p \leq 2$ because otherwise we can just pass to the adjoint, noting that $(A^*)_x = (A_x)^*$ for all $x \in \Mf$.

Let $\psi_{j,t}$ be the functions defined above and assume that $A$ is not Fredholm. It is clear that
\[[\hat{A},P_{\alpha}] = (AP_{\alpha} + Q_{\alpha})P_{\alpha} - P_{\alpha}(AP_{\alpha} + Q_{\alpha}) = AP_{\alpha} - P_{\alpha}AP_{\alpha} = 0\]
as $A \in \Lc(A^p_{\nu})$. Thus by Proposition \ref{prop3}, there is a $t \in (0,1)$ and a strictly increasing sequence $(j_m)_{m \in \N}$ such that
\[B\hat{A}M_{\psi_{j_m,t}} \neq M_{\psi_{j_m,t}} \quad \text{or} \quad M_{\psi_{j_m,t}}\hat {A}B \neq M_{\psi_{j_m,t}}\]
for all $m \in \N$ and all $B \in \Lc(L^p_{\nu})$ with $\norm{B} \leq 2\left(\sup\limits_{x \in \Mf \setminus \B^n} \norm{A_x^{-1}}\norm{P_{\alpha}} + \norm{Q_{\alpha}}\right)$. Taking a suitable subsequence if necessary, we may assume w.l.o.g.~that
\[B\hat{A}M_{\psi_{j_m,t}} \neq M_{\psi_{j_m,t}}\]
for all $m \in \N$ (the other case is exactly the same). By Lemma \ref{lem2}, there is a constant $C$ such that $\diam_{\beta} \supp \psi_{j,t} \leq C$ for all $j \in \N$. We may thus choose a radius $R$ and a sequence of midpoints $(w_m)_{m \in \N}$ with $w_m \to \partial\B^n$ such that
\[\supp \psi_{j_m,t} \subseteq D(w_m,R) = \set{z \in \B^n : \beta(w_m,z) < R}.\]
As $\Mf$ is compact, there exists a subnet $(w_{m_\gamma})$ of $(w_m)$ such that $w_{m_\gamma} \to x$ for some $x \in \Mf \setminus \B^n$. Moreover, choosing $\xi = \chi_{D(0,R)}$ in Proposition \ref{prop4}, we obtain a $\gamma_0$ such that for all $\gamma \geq \gamma_0$ there is an operator $B_\gamma \in \Lc(L^p_{\nu})$ with $\norm{B_\gamma} \leq 2\left(\norm{A_x^{-1}}\norm{P_{\alpha}} + \norm{Q_{\alpha}}\right)$ and
\[B_\gamma\hat{A}M_{\chi_{D(w_{m_\gamma},R)}} = B_\gamma\hat{A}M_{\chi_{D(0,R)} \circ \phi_{w_{m_{\gamma}}}} = M_{\chi_{D(0,R)} \circ \phi_{w_{m_{\gamma}}}} = M_{\chi_{D(w_{m_\gamma},R)}}.\]
Multiplying with $M_{\psi_{j_{m_\gamma,t}}}$ from the right yields
\[B_\gamma\hat{A}M_{\psi_{j_{m_\gamma,t}}} = M_{\psi_{j_{m_\gamma,t}}}\]
for all $\gamma \geq \gamma_0$. This is clearly a contradiction.
\end{proof}

In the next theorem we show that the converse of Theorem \ref{thm1} is true as well. In fact, the converse is not limited to Toeplitz operators.

\begin{thm} \label{thm2}
Let $A \in \Lc(A^p_{\nu})$ be Fredholm and let $(z_{\gamma})$ be a net in $\B^n$ that tends to $x \in \Mf \setminus \B^n$ such that $U_{z_{\gamma}}^pA(U_{z_{\gamma}}^q|_{A^q_{\nu}})^*$ converges $*$-strongly to $A_x \in \Lc(A^p_{\nu})$. Then $A_x$ is invertible and $\norm{A_x^{-1}} \leq \norm{P_{\nu}}\norm{(A + \Kc(A^p_{\nu}))^{-1}}$. Moreover, if $B$ is a Fredholm regularizer of $A$, $U_{z_{\gamma}}^pB(U_{z_{\gamma}}^q|_{A^q_{\nu}})^*$ converges $*$-strongly to $T_{b_x}^{-1}A_x^{-1}T_{b_x}^{-1}$ as $z_{\gamma} \to x$.
\end{thm}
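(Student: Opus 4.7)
The plan is to reduce everything to the rescaled operator $\tilde{A}_z := U_z^p A U_z^p \in \Lc(A^p_\nu)$. From $T_{b_z}^{-1} = U_z^p(U_z^q|_{A^q_\nu})^*$ and $(U_z^p)^2 = I$, one sees $(U_z^q|_{A^q_\nu})^* = U_z^p T_{b_z}^{-1}$, hence $\tilde{A}_z = \bigl(U_z^p A (U_z^q|_{A^q_\nu})^*\bigr) T_{b_z}$. Both factors on the right converge $*$-strongly (the first by hypothesis, to $A_x$; the second, to $T_{b_x}$, by \cite[Lemma 4.10]{MiSuWi}), so $\tilde{A}_{z_\gamma} \to A_x T_{b_x}$ $*$-strongly. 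Let $B$ be a Fredholm regularizer with $BA - I = K_1$, $AB - I = K_2 \in \Kc(A^p_\nu)$, put $\tilde{B}_z := U_z^p B U_z^p$ (which satisfies $\|\tilde{B}_z\| \le \|B\|$), and $\tilde{K}_{i,z} := U_z^p K_i U_z^p$. Since $\tilde{K}_{i,z} = \bigl(U_z^p K_i (U_z^q|_{A^q_\nu})^*\bigr) T_{b_z}$ and $(K_i)_x = 0$ (compact operators lie in $\Tf_{p,\nu}$ with zero limit operator), we get $\tilde{K}_{i,z_\gamma} \to 0$ $*$-strongly. The two regularizer identities therefore yield
\[\tilde{B}_{z_\gamma}\tilde{A}_{z_\gamma} = I + \tilde{K}_{1,z_\gamma} \longrightarrow I, \qquad \tilde{A}_{z_\gamma}\tilde{B}_{z_\gamma} = I + \tilde{K}_{2,z_\gamma} \longrightarrow I,\]
both $*$-strongly.

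Next I would establish invertibility of $A_x T_{b_x}$. From $\tilde{B}_{z_\gamma}\tilde{A}_{z_\gamma} g = (I + \tilde{K}_{1,z_\gamma})g$ we get $\|\tilde{A}_{z_\gamma}g\| \ge \|B\|^{-1}\|(I+\tilde{K}_{1,z_\gamma})g\|$, and passing to the limit produces $\|A_xT_{b_x}g\| \ge \|B\|^{-1}\|g\|$ for every $g \in A^p_\nu$, so $A_xT_{b_x}$ is injective with closed range. Using $\tilde{B}_{z_\gamma}^*\tilde{A}_{z_\gamma}^* = (I + \tilde{K}_{2,z_\gamma})^* \to I$ strongly (valid because the convergence above is $*$-strong), the symmetric argument on $(A^p_\nu)^*$ yields $\|(A_xT_{b_x})^* h^*\| \ge \|B\|^{-1}\|h^*\|$, making $(A_xT_{b_x})^*$ injective and thus $A_xT_{b_x}$ surjective. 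Hence $A_xT_{b_x}$ is invertible with $\|(A_xT_{b_x})^{-1}\| \le \|B\|$, and $A_x^{-1} = T_{b_x}(A_xT_{b_x})^{-1}$ exists. The bound $\|T_{b_x}\| \le \|P_\nu\|$ follows from $|b_z| \equiv 1$ (so $\|T_{b_{z_\gamma}}\| \le \|P_\nu\|\|b_{z_\gamma}\|_\infty = \|P_\nu\|$) and lower semicontinuity of the operator norm under strong convergence, so $\|A_x^{-1}\| \le \|P_\nu\|\|B\|$. Taking the infimum over Fredholm regularizers $B$ produces the desired estimate $\|A_x^{-1}\| \le \|P_\nu\|\|(A+\Kc(A^p_\nu))^{-1}\|$.

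With $(A_xT_{b_x})^{-1}$ in hand, the convergence of the regularizer will follow from the decomposition
\[\tilde{B}_{z_\gamma} f = \tilde{B}_{z_\gamma}\tilde{A}_{z_\gamma}(A_xT_{b_x})^{-1} f + \tilde{B}_{z_\gamma}\bigl(I - \tilde{A}_{z_\gamma}(A_xT_{b_x})^{-1}\bigr) f.\]
The first summand equals $(I + \tilde{K}_{1,z_\gamma})(A_xT_{b_x})^{-1}f$ and converges in norm to $(A_xT_{b_x})^{-1}f$; in the second, the bracketed factor tends to $0$ in norm (by $\tilde{A}_{z_\gamma} \to A_xT_{b_x}$ strongly), and the uniform bound on $\tilde{B}_{z_\gamma}$ on the left preserves this, so the second summand vanishes in the limit. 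Thus $\tilde{B}_{z_\gamma} \to (A_xT_{b_x})^{-1}$ strongly; running the same argument on the adjoint side using $\tilde{A}_{z_\gamma}^*\tilde{B}_{z_\gamma}^* \to I$ strongly upgrades the convergence to $*$-strong. Finally, $U_{z_\gamma}^p B (U_{z_\gamma}^q|_{A^q_\nu})^* = \tilde{B}_{z_\gamma} T_{b_{z_\gamma}}^{-1} \longrightarrow (A_xT_{b_x})^{-1}T_{b_x}^{-1} = T_{b_x}^{-1}A_x^{-1}T_{b_x}^{-1}$ $*$-strongly.

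The principal obstacle is ensuring \emph{$*$-strong} rather than merely strong convergence of $\tilde{B}_{z_\gamma}$: the uniformly bounded factor $\tilde{B}_{z_\gamma}$ must sit on the \emph{left} of the strongly-null factor (right-multiplying a strongly-null sequence by a uniformly bounded one can fail to preserve strong convergence to zero), which the decomposition above arranges, with the adjoint side handled symmetrically using the other regularizer equation.
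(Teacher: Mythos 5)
Your proof is correct and reaches the same conclusions via the same core ingredients (regularizer relations, $K_x=0$ for compact $K$, $*$-strong convergence, and lower semicontinuity of the operator norm), but it is organized through a genuinely different decomposition. You conjugate both $A$ and $B$ symmetrically by $U_z^p$ to form $\tilde{A}_z, \tilde{B}_z$, so that the regularizer identities become the clean algebraic statements $\tilde{B}_z\tilde{A}_z = I + \tilde{K}_{1,z}$ and $\tilde{A}_z\tilde{B}_z = I + \tilde{K}_{2,z}$; invertibility of $A_xT_{b_x}$ drops out directly, and then the convergence of the regularizer follows from the two-term splitting $\tilde{B}_{z_\gamma} = (I+\tilde{K}_{1,z_\gamma})(A_xT_{b_x})^{-1} + \tilde{B}_{z_\gamma}\bigl(I-\tilde{A}_{z_\gamma}(A_xT_{b_x})^{-1}\bigr)$, with the adjoint side handled symmetrically via the \emph{other} regularizer identity. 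The paper instead works directly with $U_{z_\gamma}^pB(U_{z_\gamma}^q|_{A^q_\nu})^*$, inserts $T_{b_{z_\gamma}}$ and $T_{b_{z_\gamma}}^{-1}$ factors into the lower bound $\|f\| = \|T_{b_{z_\gamma}}T_{b_{z_\gamma}}^{-1}f\| \le \|P_\nu\|\|B\|\|U_{z_\gamma}^pA(U_{z_\gamma}^q|_{A^q_\nu})^*f\| + \dots$ for the invertibility estimate, and proves the regularizer convergence through a single four-term telescoping identity whose $*$-strong vanishing is checked term by term. Your version buys a shorter, more modular argument (invertibility first, then a Neumann-style perturbation for the regularizer) and makes the role of $b_x$ explicit only at the very end; the paper's version avoids introducing the intermediate object $A_xT_{b_x}$ and is closer in spirit to the analogous computation in \cite{MiSuWi}. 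One small imprecision: when you say the bracketed factor ``tends to $0$ in norm,'' you mean the vectors $(I - \tilde{A}_{z_\gamma}(A_xT_{b_x})^{-1})f$ tend to $0$ in $A^p_\nu$-norm (strong convergence), not operator-norm convergence; the conclusion is still right since $\tilde{B}_{z_\gamma}$ is uniformly bounded.
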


\begin{proof}
If $B$ is a Fredholm regularizer of $A$, then $AB-I$ and $BA-I$ are compact and hence $U_{z_{\gamma}}^p(AB-I)(U_{z_{\gamma}}^q|_{A^q_{\nu}})^* \to 0$ and $U_{z_{\gamma}}^p(BA-I)(U_{z_{\gamma}}^q|_{A^q_{\nu}})^* \to 0$ $*$-strongly as $z_{\gamma} \to x$ (see \cite[Proposition 4.12, Theorem 5.5]{MiSuWi}). Moreover,
\begin{align*}
\norm{f} &= \norm{T_{b_{z_{\gamma}}}T_{b_{z_{\gamma}}}^{-1}f} \leq \norm{P_{\nu}} \norm{U_{z_{\gamma}}^p(U_{z_{\gamma}}^q|_{A^q_{\nu}})^*f}\\
&\leq \norm{P_{\nu}}\norm{U_{z_{\gamma}}^pBA(U_{z_{\gamma}}^q|_{A^q_{\nu}})^*f} + \norm{P_{\nu}}\norm{U_{z_{\gamma}}^p(I-BA)(U_{z_{\gamma}}^q|_{A^q_{\nu}})^*f}\\
&\leq \norm{P_{\nu}}\norm{B}\norm{U_{z_{\gamma}}^pA(U_{z_{\gamma}}^q|_{A^q_{\nu}})^*f} + \norm{P_{\nu}}\norm{U_{z_{\gamma}}^p(I-BA)(U_{z_{\gamma}}^q|_{A^q_{\nu}})^*f}
\end{align*}
for every $f \in A^p_{\nu}$, using that $U_{z_{\gamma}}^p$ is an isometry. Taking the limit $z_{\gamma} \to x$, we obtain $\norm{f} \leq \norm{P_{\nu}}\norm{B}\norm{A_xf}$ for every $f \in A^p_{\nu}$. This implies that $A_x$ is injective and has a closed range. By the dual argument, we also obtain $\norm{f} \leq \norm{P_{\nu}}\norm{B^*}\norm{A_x^*f}$ for every $f \in A^q_{\nu}$, which implies that $A_x$ is surjective, hence invertible. Moreover, it shows that $\norm{A_x^{-1}} \leq \norm{P_{\nu}}\norm{B}$. As this is true for every regularizer $B$, we obtain $\norm{A_x^{-1}} \leq \norm{P_{\nu}}\norm{(A + \Kc(A^p_{\nu}))^{-1}}$.

Moreover, using $(U_{z_{\gamma}}^q|_{A^q_{\nu}})^*T_{b_{z_{\gamma}}}U_{z_{\gamma}}^p = I$, we have
\begin{align*}
U_{z_{\gamma}}^pB(U_{z_{\gamma}}^q|_{A^q_{\nu}})^* - T_{b_x}^{-1}A_x^{-1}T_{b_x}^{-1} &= U_{z_{\gamma}}^pB(U_{z_{\gamma}}^q|_{A^q_{\nu}})^*T_{b_{z_{\gamma}}}(A_x - U_{z_{\gamma}}^pA(U_{z_{\gamma}}^q|_{A^q_{\nu}})^*)A_x^{-1}T_{b_{z_{\gamma}}}^{-1}\\
&\quad + U_{z_{\gamma}}^p(BA-I)(U_{z_{\gamma}}^q|_{A^q_{\nu}})^*A_x^{-1}T_{b_{z_{\gamma}}}^{-1}\\
&\quad + T_{b_{z_{\gamma}}}^{-1}A_x^{-1}(T_{b_{z_{\gamma}}}^{-1} - T_{b_{x}}^{-1}) + (T_{b_{z_{\gamma}}}^{-1} - T_{b_{x}}^{-1})A_x^{-1}T_{b_x}^{-1}
\end{align*}
and all terms on the right-hand side tend $*$-strongly to $0$ as $z_{\gamma} \to x$.
\end{proof}

In particular, we have shown that a Toeplitz operator is Fredholm if and only if all of its limit operators are invertible and their inverses are uniformly bounded. We will state this result in a separate theorem below. But let us first argue why the condition on uniform boundedness is actually redundant. The argument is very similar to the sequence space case, cf.~\cite{LiSe}.

Let $r_t := \sup\limits_{j \in \N} \diam_{\beta} \supp \varphi_{j,t}$, where $\varphi_{j,t}$ is defined as usual. By Lemma \ref{lem2}, $r_t$ is finite for every $t \in (0,1)$. Now, for every $t \in (0,1)$, $A \in \Lc(L^p_{\nu})$ and every Borel set $F \subseteq \B^n$ we define
\[\nu_t(A|_F) := \inf\set{\norm{Af} : f \in L^p_{\nu}, \norm{f} = 1, \supp f \subseteq D(w,r_t) \cap F \text{ for some } w \in \B^n}\]
and
\[\nu(A|_F) := \inf\set{\norm{Af} : f \in L^p_{\nu}, \norm{f} = 1, \supp f \subseteq F}.\]
Moreover, $\nu(A) := \nu(A|_{\B^n})$.

\begin{prop} \label{prop6}
For all $A,B \in \Lc(L^p_{\nu})$ and all Borel sets $F \subseteq \B^n$ it holds $\abs{\nu(A|_F) - \nu(B|_F)} \leq \norm{(A - B)M_{\chi_F}}$. The same statement also holds if we replace $\nu$ by $\nu_t$ for some $t \in (0,1)$.
\end{prop}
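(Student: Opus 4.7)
The approach is a direct estimation using the reverse triangle inequality, exploiting the fact that every admissible test function $f$ in the definitions of $\nu(\cdot|_F)$ and $\nu_t(\cdot|_F)$ satisfies $\supp f \subseteq F$ and therefore $M_{\chi_F}f = f$.

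First I would fix an arbitrary admissible $f$ for $\nu(B|_F)$, i.e.\ $f \in L^p_\nu$ with $\norm{f} = 1$ and $\supp f \subseteq F$. Since $M_{\chi_F}f = f$, I can write
\[
\norm{Af} \leq \norm{Bf} + \norm{(A-B)f} = \norm{Bf} + \norm{(A-B)M_{\chi_F}f} \leq \norm{Bf} + \norm{(A-B)M_{\chi_F}}.
\]
Because $f$ is also admissible for $\nu(A|_F)$, the left-hand side is bounded below by $\nu(A|_F)$. Taking the infimum over all such $f$ on the right then gives
\[
\nu(A|_F) \leq \nu(B|_F) + \norm{(A-B)M_{\chi_F}}.
\]
Swapping the roles of $A$ and $B$ yields the reverse inequality, and the two together give $\abs{\nu(A|_F) - \nu(B|_F)} \leq \norm{(A-B)M_{\chi_F}}$.

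For the $\nu_t$-version, the argument is identical: the class of admissible functions is narrower (one additionally requires $\supp f \subseteq D(w,r_t) \cap F$ for some $w$), but the key property $M_{\chi_F}f = f$ still holds, and admissibility for $\nu_t(A|_F)$ is the same as for $\nu_t(B|_F)$, so the same chain of inequalities applies verbatim.

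There is no real obstacle here; the only thing to be careful about is making sure to use the common class of admissible functions for both $A$ and $B$ so that the infimum can be taken symmetrically. The point of the lemma is simply that $\nu(\cdot|_F)$ and $\nu_t(\cdot|_F)$ are $1$-Lipschitz in the operator norm when restricted (via $M_{\chi_F}$) to functions supported in $F$, a fact that will presumably be used later to propagate norm estimates from band-dominated approximants to the operator $A$ itself.
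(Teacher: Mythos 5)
Your proof is correct and is essentially the argument in the paper; both exploit that admissible $f$ satisfy $M_{\chi_F}f = f$, apply the reverse triangle inequality, and then take the infimum (the paper phrases this via an $\epsilon$-almost-minimizer, which is the same thing). The observation that the $\nu_t$-version goes through verbatim because the admissible class is simply narrower is also exactly the paper's reasoning.
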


\begin{proof}
We only prove the first claim, but the same proof also works for the second claim. Let $\epsilon > 0$. Choose $f \in L^p_{\nu}$ with $\norm{f} = 1$, $\supp f \subseteq F$ and $\norm{Bf} \leq \nu(B|_F) + \epsilon$. This implies
\[\nu(A|_F) - \nu(B|_F) - \epsilon \leq \nu(A|_F) - \norm{Bf} \leq \norm{Af} - \norm{Bf} \leq \norm{(A - B)f} \leq \norm{(A - B)M_{\chi_F}}.\]
Similarly, $\nu(B|_F) - \nu(A|_F) - \epsilon \leq \norm{(A - B)M_{\chi_F}}$. Since $\epsilon$ was arbitrary, the assertion follows. 
\end{proof}

For $p \leq 2$ and $\alpha = (\frac{2}{p} - 1)(n+1) + \frac{2\nu}{p}$ we will use the (abuse of) notation $\hat{A}_x := A_xT_{b_x}P_{\alpha} + Q_{\alpha}$.

\begin{prop} \label{prop5}
Let $p \leq 2$, $\alpha = (\frac{2}{p} - 1)(n+1) + \frac{2\nu}{p}$ and $A \in \Tf_{p,\nu}$. Then for every $\epsilon > 0$ there exists a $t \in (0,1)$ such that for every Borel set $F \subseteq \B^n$ and every operator $B \in \{\hat{A}\} \cup \set{\hat{A}_x : x \in \Mf \setminus \B^n}$ it holds
\begin{equation} \label{eq_prop5}
\nu(B|_F) \leq \nu_t(B|_F) \leq \nu(B|_F) + \epsilon.
\end{equation}
\end{prop}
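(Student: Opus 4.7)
The inequality $\nu(B|_F) \leq \nu_t(B|_F)$ is immediate, since every function admissible for the infimum defining $\nu_t(B|_F)$ is also admissible for $\nu(B|_F)$. For the reverse direction, the key is that every $B$ in $\set{\hat{A}} \cup \set{\hat{A}_x : x \in \Mf \setminus \B^n}$ is band-dominated with a \emph{uniform} BDO-modulus. $\hat{A}$ is covered by Theorem \ref{thm0}. For $\hat{A}_x = A_x T_{b_x} P_{\alpha} + Q_{\alpha}$, the operator $A_x T_{b_x}$ is the $*$-strong limit of $U_{z_{\gamma}}^p A U_{z_{\gamma}}^p$; combining the identity $U_z^p M_f U_z^p = M_{f \circ \phi_z}$ from the proof of Proposition \ref{prop4} with the fact that $\phi_z$ is a Bergman isometry shows that the BDO tails appearing in Propositions \ref{BDO_characterization} and \ref{BDO_characterization2} are preserved under conjugation by $U_z^p$, and a Fatou-type argument passes them to the $*$-strong limit. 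One parameter $t$ then simultaneously controls the BDO tails over the whole family.

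The decisive technical device is the \emph{$p$-partition} $\tilde{\varphi}_{j,t} := \varphi_{j,t}^{1/p}$. These still take values in $[0,1]$, have the same supports $\Xi_{j,t,1}$ as $\varphi_{j,t}$, satisfy $\psi_{j,t}\tilde{\varphi}_{j,t} = \tilde{\varphi}_{j,t}$ (because $\psi_{j,t} = 1$ on $\Xi_{j,t,2} \supseteq \Xi_{j,t,1}$), and crucially obey $\sum_j \tilde{\varphi}_{j,t}^p = \sum_j \varphi_{j,t} = 1$. The continuity of $s \mapsto s^{1/p}$ preserves the distance hypothesis of Lemma \ref{lem3}, so Propositions \ref{BDO_characterization}$(ii)$ and \ref{BDO_characterization2} carry over verbatim with $\tilde{\varphi}_{j,t}$ in place of $\varphi_{j,t}$.

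Given $\epsilon > 0$, choose $t$ so small that both $\sup_{\norm{f}=1} \sum_j \norm{[B, M_{\tilde{\varphi}_{j,t}}]f}^p$ and $\sup_{\norm{f}=1} \sum_j \norm{M_{\tilde{\varphi}_{j,t}} B M_{1-\psi_{j,t}}f}^p$ lie below a prescribed threshold, uniformly over $B$. For $F$, $B$, and $\delta > 0$ given, pick $f$ with $\norm{f} = 1$, $\supp f \subseteq F$, and $\norm{Bf}^p \leq \nu(B|_F)^p + \delta$. Setting $f_j := M_{\tilde{\varphi}_{j,t}}f$, one has $\supp f_j \subseteq \Xi_{j,t,1} \cap F$, contained in a Bergman ball of radius $r_t$ intersected with $F$; hence $f_j/\norm{f_j}$ is admissible for $\nu_t(B|_F)$. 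Combined with $\sum_j \norm{f_j}^p = \int \sum_j \tilde{\varphi}_{j,t}^p \abs{f}^p \, \mathrm{d}v_{\nu} = 1$ this yields
\[ \nu_t(B|_F)^p = \nu_t(B|_F)^p \sum_j \norm{f_j}^p \leq \sum_j \norm{B f_j}^p, \]
so the proposition reduces to showing $\sum_j \norm{B f_j}^p \leq (1+\eta) \norm{Bf}^p + o_t(1)$ for $\eta$ arbitrarily small.

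The main obstacle is proving this with a constant essentially $1$. Decompose $Bf_j = M_{\psi_{j,t}}Bf_j + M_{1-\psi_{j,t}}Bf_j$: the tail satisfies $M_{1-\psi_{j,t}}Bf_j = M_{1-\psi_{j,t}}[B, M_{\tilde{\varphi}_{j,t}}]f$ (since $(1-\psi_{j,t})\tilde{\varphi}_{j,t} = 0$), and hence sums to $o_t(1)$ by the commutator BDO-tail. For the main piece, $\psi_{j,t}\tilde{\varphi}_{j,t} = \tilde{\varphi}_{j,t}$ gives
\[ M_{\psi_{j,t}}Bf_j = M_{\tilde{\varphi}_{j,t}}Bf + M_{\psi_{j,t}}[B, M_{\tilde{\varphi}_{j,t}}]f, \]
and the central identity $\sum_j \norm{M_{\tilde{\varphi}_{j,t}}Bf}^p = \int \sum_j \tilde{\varphi}_{j,t}^p \abs{Bf}^p \, \mathrm{d}v_{\nu} = \norm{Bf}^p$ produces exactly $\norm{Bf}^p$ on the right. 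Two applications of the splitting $(a+b)^p \leq (1+\eta) a^p + C_{\eta} b^p$ then absorb the $2^{p-1}$-factors into $1+\eta$ and the commutator residues into $o_t(1)$. Since $\sup_B \norm{B}$ is uniformly bounded on the family by Proposition \ref{prop7}, $\eta$ can be chosen so small (and afterwards $t$ so small) that after $\delta \to 0$ and extracting $p$-th roots the final estimate reads $\nu_t(B|_F) \leq \nu(B|_F) + \epsilon$.
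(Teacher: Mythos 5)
Your proposal follows the paper's proof in its core mechanism: the $p$-partition $\varphi_{j,t}^{1/p}$, the exact identity $\sum_j \|M_{\varphi_{j,t}^{1/p}} Bf\|^p = \|Bf\|^p$, the commutator decomposition against $\psi_{j,t}$, and the conclusion via picking the best index $j$. Your use of $(a+b)^p \leq (1+\eta)a^p + C_\eta b^p$ in place of the paper's Minkowski inequality in $\ell^p(\N)$ is a harmless cosmetic variant.

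The step that is asserted rather than established is the uniformity: you claim that ``one parameter $t$ simultaneously controls the BDO tails over the whole family'' via a Fatou-type argument, but this is precisely the nontrivial point. Propositions \ref{BDO_characterization} and \ref{BDO_characterization2} state that the tails tend to $0$ for a \emph{fixed} band-dominated operator, with no uniformity over a family or over M\"obius-shifted partitions built into the statements. The paper substantiates the uniformity concretely: it fixes a band operator $\hat{A}_n$ with $\|\hat{A}-\hat{A}_n\| < \epsilon/4$, shows that a weak-operator limit $(\hat{A}_x)_n$ of $U_{z_\gamma}\hat{A}_n U_{z_\gamma}$ along a net converging to $x$ is again a band operator of the \emph{same} band width $\omega$ (using $U_z M_f U_z = M_{f\circ\phi_z}$ and that $\phi_z$ is a $\beta$-isometry) and that $\|\hat{A}_x - (\hat{A}_x)_n\| \leq \|\hat{A} - \hat{A}_n\|$ by lower semicontinuity of the norm under weak operator convergence. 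It can then apply Lemma \ref{lem3}, which \emph{is} uniform over all band operators of a fixed band width and over all test families with the prescribed structural constants, to get one $t$ that works for the whole band-operator family $\{\hat{A}_n\}\cup\{(\hat{A}_x)_n\}$; finally the estimate is transferred to $\{\hat{A}\}\cup\{\hat{A}_x\}$ via Proposition \ref{prop6}. Your Fatou route, made precise, ends up needing exactly the same ingredient: the commutator bounds for the shifted partitions $\varphi_{j,t}^{1/p}\circ\phi_{z_\gamma}$ must be uniform in $z_\gamma$, and the only place the paper offers such uniformity is Lemma \ref{lem3} for band operators. So the reduction to band operators is not an optional detour but the mechanism that makes the single-$t$ claim rigorous, and your write-up should include it (or an equivalent uniform quantitative lemma) rather than appealing to a ``uniform BDO-modulus'' that is nowhere defined or proved.
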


\begin{proof}
The first inequality is clear by definition. For the second inequality we start with a few simple observations. By Theorem \ref{thm0}, $\hat{A}$ is band-dominated. Therefore there is a sequence of band operators $(\hat{A}_n)_{n \in \N}$ that converges to $\hat{A}$. Choose $n$ sufficiently large such that $\|\hat{A}-\hat{A}_n\| < \frac{\epsilon}{4}$ and denote the band width of $\hat{A}_n$ by $\omega$. Let $x \in \Mf$ and choose a net $(z_{\gamma})$ that converges to $x$. Then $(U_{z_{\gamma}}\hat{A}_nU_{z_{\gamma}})$ is a bounded net and hence there is a subnet of $(z_{\gamma})$, again denoted by $(z_{\gamma})$ such that $(U_{z_{\gamma}}\hat{A}_nU_{z_{\gamma}})$ converges in the weak operator topology as $z_{\gamma} \to x$. Let us denote this limit by $(\hat{A}_x)_n$. As $(U_{z_{\gamma}}\hat{A}U_{z_{\gamma}}) = (U_{z_{\gamma}}AU_{z_{\gamma}})P_{\alpha} + Q_{\alpha}$ converges to $A_xT_{b_x}P_{\alpha} + Q_{\alpha} = \hat{A}_x$ in the strong operator topology (see Proposition \ref{prop7}), we obtain that $(U_{z_{\gamma}}(\hat{A}-\hat{A}_n)U_{z_{\gamma}})$ converges to $\hat{A}_x - (\hat{A}_x)_n$ in the weak operator topology. This implies
\[\|\hat{A}_x - (\hat{A}_x)_n\| \leq \sup\limits_{\gamma} \|(U_{z_{\gamma}}(\hat{A}-\hat{A}_n)U_{z_{\gamma}})\| = \|\hat{A}-\hat{A}_n\| < \frac{\epsilon}{4}.\]
Let $f,g \in L^{\infty}(\B^n)$ with $\dist_{\beta}(\supp f,\supp g) > \omega$. Then equation \eqref{eq2} implies
\[M_f(U_{z_{\gamma}}\hat{A}_nU_{z_{\gamma}})M_g = U_{z_{\gamma}}M_{f \circ \phi_{z_{\gamma}}}\hat{A}_nM_{g \circ \phi_{z_{\gamma}}}U_{z_{\gamma}} = 0\]
because $\dist_{\beta}(\supp f \circ \phi_{z_{\gamma}},\supp g \circ \phi_{z_{\gamma}}) = \dist_{\beta}(\supp f,\supp g) > \omega$. Hence all elements in the net $(U_{z_{\gamma}}\hat{A}_nU_{z_{\gamma}})$ have the same band width $\omega$. As $M_f(U_{z_{\gamma}}\hat{A}_nU_{z_{\gamma}})M_g$ converges to $M_f(\hat{A}_x)_nM_g$ in weak operator topology, $(\hat{A}_x)_n$ is also a band operator of band width at most $\omega$.

The strategy now is to prove that there exists a $t \in (0,1)$ such that for all $F \subseteq \B^n$ and all operators $B \in \{\hat{A}_n\} \cup \set{(\hat{A}_x)_n : x \in \Mf \setminus \B^n}$ it holds
\[\nu_t(B|_F) \leq \nu(B|_F) + \frac{\epsilon}{2}\]
and then use Proposition \ref{prop6}. Indeed, suppose that the above is true. Then Proposition \ref{prop6} implies
\[|\nu(\hat{A}|_F) - \nu(\hat{A}_n|_F)| \leq \|\hat{A} - \hat{A}_n\| < \frac{\epsilon}{4} \quad \text{and} \quad |\nu(\hat{A}_x|_F) - \nu((\hat{A}_x)_n|_F)| \leq \|\hat{A}_x - (\hat{A}_x)_n\| < \frac{\epsilon}{4}\]
and
\[|\nu_t(\hat{A}|_F) - \nu_t(\hat{A}_n|_F)| \leq \|\hat{A} - \hat{A}_n\| < \frac{\epsilon}{4} \quad \text{and} \quad \|\nu_t(\hat{A}_x|_F) - \nu_t((\hat{A}_x)_n|_F)| \leq \|\hat{A}_x - (\hat{A}_x)_n\| < \frac{\epsilon}{4}\]
for all $t \in (0,1)$ and the proposition follows.

Choose $f \in L^p_{\nu}$ with $\norm{f} = 1$ and $\supp f \subseteq F$ such that
\[\|Bf\| \leq \nu(B|_F) + \frac{\epsilon}{4}.\]
Let $\varphi_{j,t}$ and $\psi_{j,t}$ be defined as usual. Then by Minkowski's inequality in $\ell^p(\N)$, we obtain
\begin{align*}
\left(\sum\limits_{j = 1}^{\infty} \norm{BM_{\varphi_{j,t}^{1/p}}f}^p\right)^{1/p} &= \left(\sum\limits_{j = 1}^{\infty} \norm{BM_{\varphi_{j,t}^{1/p}}M_{\psi_{j,t}}f}^p\right)^{1/p}\\
&\leq \left(\sum\limits_{j = 1}^{\infty} \norm{M_{\varphi_{j,t}^{1/p}}Bf}^p\right)^{1/p} + \left(\sum\limits_{j = 1}^{\infty} \norm{M_{\varphi_{j,t}^{1/p}}BM_{1-\psi_{j,t}}f}^p\right)^{1/p}\\
&\qquad + \left(\sum\limits_{j = 1}^{\infty} \norm{[B,M_{\varphi_{j,t}^{1/p}}]M_{\psi_{j,t}}f}^p\right)^{1/p}.
\end{align*}
The first term is just $\|Bf\|$ (recall that $\sum\limits_{j = 1}^{\infty} \abs{\varphi_{j,t}(z)} = 1$ for all $z \in \B^n$, $t \in (0,1)$). The second term vanishes for $\dist_{\beta}(\supp \varphi_{j,t},\supp(1 - \psi_{j,t})) > \omega$ as $B$ has band width $\omega$. The third term can be estimated as
\begin{align} \label{eq_prop5_2}
\left(\sum\limits_{j = 1}^{\infty} \norm{[B,M_{\varphi_{j,t}^{1/p}}]M_{\psi_{j,t}}f}^p\right)^{1/p} &\leq \sup\limits_{j \in \N} \norm{[B,M_{\varphi_{j,t}^{1/p}}]}\left(\sum\limits_{j = 1}^{\infty} \norm{M_{\psi_{j,t}}f}^p\right)^{1/p}\notag\\
&\leq N^{1/p}\sup\limits_{j \in \N} \norm{[B,M_{\varphi_{j,t}^{1/p}}]}
\end{align}
by Proposition \ref{AuxiliaryProp2}. Observe that the functions $\varphi_{j,t}^{1/p}$ satisfy the assumptions in Lemma \ref{lem3}. Indeed, let $U,V \subset [0,1]$ with $\dist(U,V) > 0$ and $w_{j,t} \in (\varphi_{j,t}^{1/p})^{-1}(U)$, $z_{j,t} \in (\varphi_{j,t}^{1/p})^{-1}(V)$. Clearly, we have $\dist(U^p,V^p) > 0$ as well and therefore
\[\beta(z_{j,t},w_{j,t}) \geq \frac{1}{6Nt}\abs{\varphi_{j,t}(z_{j,t}) - \varphi_{j,t}(w_{j,t})} \geq \frac{1}{6Nt}\dist(U^p,V^p) \to \infty\]
as $t \to 0$. Lemma \ref{lem3} thus implies that for every $\delta > 0$ there is a $t > 0$ such that
\[\left(\sum\limits_{j = 1}^{\infty} \norm{[B,M_{\varphi_{j,t}^{1/p}}]M_{\psi_{j,t}}f}^p\right)^{1/p} \leq \delta\norm{B}.\]
As $\norm{B} \leq \|\hat{A}\| + \frac{\epsilon}{4}$ for all $B \in \{\hat{A}_n\} \cup \set{(\hat{A}_x)_n : x \in \Mf \setminus \B^n}$ by the above, we may choose $\delta > 0$ such that $\delta\norm{B} \leq \frac{\epsilon}{4}$ for all $B$. Therefore
\[\left(\sum\limits_{j = 1}^{\infty} \norm{BM_{\varphi_{j,t}^{1/p}}f}^p\right)^{1/p} \leq  \|Bf\| + \frac{\epsilon}{4} \leq \nu(B|_F) + \frac{\epsilon}{2} = \left(\nu(B|_F) + \frac{\epsilon}{2}\right)\left(\sum\limits_{j = 1}^{\infty} \norm{M_{\varphi_{j,t}^{1/p}}f}^p\right)^{1/p}.\]
This implies, in particular, that there exists a $j \in \N$ such that
\[\norm{BM_{\varphi_{j,t}^{1/p}}f} \leq (\nu(B|_F) + \frac{\epsilon}{2})\norm{M_{\varphi_{j,t}^{1/p}}f}\]
for sufficiently small $t$. Since $\supp \left(M_{\varphi_{j,t}^{1/p}}f\right) \subseteq \supp \varphi_{j,t} \subseteq D(w,r_t)$ for some $w \in \B^n$ by definition, this implies $\nu_t(B|_F) \leq \nu(B|_F) + \frac{\epsilon}{2}$ for all $B \in \{\hat{A}_n\} \cup \set{(\hat{A}_x)_n : x \in \Mf \setminus \B^n}$. As $t$ is chosen independently of $F$ (as it is chosen independently of $f$) and $B$, the assertion follows.
\end{proof}

The next lemma shows some kind of shift invariance of the operator spectrum. This will allow us to shift functions to the right place in the subsequent lemma.

\begin{lem} \label{lem5}
Let $p \leq 2$, $\alpha = (\frac{2}{p} - 1)(n+1) + \frac{2\nu}{p}$, $A \in \Tf_{p,\nu}$ and $f \in L^p_{\nu}$ with $\supp f \subseteq D(w,r)$ for some $w \in \B^n$ and $r > 0$. Then for every $x \in \Mf \setminus \B^n$ there exist $y \in \Mf \setminus \B^n$ and $g \in L^p_{\nu}$ with $\norm{g} = \norm{f}$ and $\supp g \subseteq D(0,r)$ such that $\|\hat{A}_xf\| = \|\hat{A}_yg\|$. Moreover, $\nu(\hat{A}_y|_{D(0,r + \beta(0,w))}) \leq \nu(\hat{A}_x|_{D(0,r)})$.
\end{lem}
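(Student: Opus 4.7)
The plan is to use the isometric involutions $U_w^p$ and $U_{v_\gamma}^p$, with $v_\gamma := \phi_{z_\gamma}(w)$ for a net $(z_\gamma) \to x$, to translate the support of $f$ back to the origin. The residual discrepancy is a biholomorphism of $\B^n$ that fixes $0$, hence (by the ball-specific fact that every such map is a linear unitary) a unitary rotation; this discrepancy is absorbed into an isometric rotation--composition operator that preserves both the $L^p_\nu$-norm and the support class $\set{h : \supp h \subseteq D(0,r)}$.

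\textbf{First assertion.} I set $g_0 := U_w^p f$, which satisfies $\norm{g_0} = \norm{f}$ and $\supp g_0 \subseteq \phi_w(D(w,r)) = D(0,r)$. Fix any net $(z_\gamma)$ in $\B^n$ with $z_\gamma \to x$. The $*$-strong convergence $U_{z_\gamma}^p \hat A U_{z_\gamma}^p \to \hat A_x$ (which follows from Proposition \ref{prop7} combined with the commutation $P_\alpha U_z^p = U_z^p P_\alpha$ proved in \eqref{eq3}), together with the isometry property of $U_{z_\gamma}^p$, gives $\norm{\hat A_x f} = \lim\limits_\gamma \norm{\hat A U_{z_\gamma}^p f}$. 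Since $\supp U_{z_\gamma}^p f \subseteq \phi_{z_\gamma}(D(w,r)) = D(v_\gamma, r)$, I write $U_{z_\gamma}^p f = U_{v_\gamma}^p \tilde f_\gamma$ with $\tilde f_\gamma := U_{v_\gamma}^p U_{z_\gamma}^p f$ supported in $D(0,r)$. Using $f = U_w^p g_0$, this factors as $\tilde f_\gamma = \tilde V_\gamma g_0$ where $\tilde V_\gamma := U_{v_\gamma}^p U_{z_\gamma}^p U_w^p$ is a weighted composition operator for the biholomorphism $\phi_w \circ \phi_{z_\gamma} \circ \phi_{v_\gamma}$. The choice $v_\gamma = \phi_{z_\gamma}(w)$ makes this map trace $0 \mapsto v_\gamma \mapsto w \mapsto 0$, so it is a unitary rotation $R_\gamma \in U(n)$; consequently $\tilde V_\gamma$ has unimodular weight, is an $L^p_\nu$-isometry, and preserves $D(0,r)$-supports. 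Combining these identities,
\[\norm{\hat A U_{z_\gamma}^p f} = \norm{\hat A U_{v_\gamma}^p \tilde V_\gamma g_0} = \norm{U_{v_\gamma}^p \hat A U_{v_\gamma}^p \tilde V_\gamma g_0} = \norm{\hat A_{v_\gamma} \tilde V_\gamma g_0}.\]

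\textbf{Extracting $y$.} The identity $1 - \abs{\phi_z(w)}^2 = (1-\abs{z}^2)(1-\abs{w}^2)/\abs{1-\sp{w}{z}}^2$ forces $\abs{v_\gamma} \to 1$, so any $\Mf$-subnet limit of $v_\gamma$ lies in $\Mf \setminus \B^n$. Compactness of both $\Mf$ and $U(n)$ lets me pass to a subnet along which $v_\gamma \to y \in \Mf \setminus \B^n$ and $R_\gamma \to R \in U(n)$. Proposition \ref{prop7} gives $\hat A_{v_\gamma} \to \hat A_y$ strongly on $L^p_\nu$, while standard $L^p$-continuity of the rotation action (combined with $\norm{\tilde V_\gamma} = 1$) gives $\tilde V_\gamma g_0 \to \tilde V g_0 =: g$ in norm. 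A standard two-term estimate, using the uniform bound $\norm{\hat A_{v_\gamma}} = \norm{\hat A}$, then yields $\hat A_{v_\gamma} \tilde V_\gamma g_0 \to \hat A_y g$ in norm, and hence $\norm{\hat A_x f} = \norm{\hat A_y g}$. The isometry properties of $\tilde V$ and $U_w^p$, together with $R(D(0,r)) = D(0,r)$, give $\norm{g} = \norm{f}$ and $\supp g \subseteq D(0,r)$, settling the first assertion.

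\textbf{``Moreover'' part and main obstacle.} Crucially, the limit point $y$ depends only on $x$, $w$, and the chosen subnet, not on $f$ or on $r$. For any $f'$ with $\norm{f'} = 1$ and $\supp f' \subseteq D(0,r)$, the $\beta$-triangle inequality yields $\supp f' \subseteq D(w, r + \beta(0,w))$, so the first assertion (applied with $r$ replaced by $r+\beta(0,w)$, using the same $y$) produces $g'$ with $\norm{g'} = 1$, $\supp g' \subseteq D(0, r+\beta(0,w))$, and $\norm{\hat A_x f'} = \norm{\hat A_y g'}$. Taking the infimum over $f'$ yields $\nu(\hat A_y|_{D(0, r+\beta(0,w))}) \leq \nu(\hat A_x|_{D(0,r)})$. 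The delicate step is the identification of $\tilde V_\gamma$ as an $L^p_\nu$-isometry that is a weighted composition operator by a genuine unitary rotation --- namely, collapsing the product of three $U_z^p$-weights to a unimodular factor and then producing a subnet for which $\tilde V_\gamma g_0$ converges in $L^p_\nu$-norm despite the individual weights $k_{z_\gamma}, k_{v_\gamma}$ degenerating at the boundary. This is exactly where the ball-specific classification of $0$-fixing biholomorphisms as linear unitaries enters essentially, explaining the paper's restriction to the unit ball in this section.
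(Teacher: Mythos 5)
Your proof is correct and follows essentially the same route as the paper's: both use the factorization $U_{v}^p U_{z}^p U_w^p$ into a (unimodularly weighted) rotation via Cartan's theorem, compactness of $U(n)$ and $\Mf$ to extract $\tilde V$ and $y$, and the triangle inequality to pass from $D(0,r)$ to $D(0,r+\beta(0,w))$ for the "moreover" part. The only cosmetic difference is that the paper packages the construction as the operator conjugation identity $U_w^p\hat A_x U_w^p=\tilde V_*^{-1}\hat A_y\tilde V_*$, whereas you track norms of applied vectors; the content is the same.
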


\begin{proof}
A direct computation yields
\[(U_z^pU_w^pf)(\zeta) = (f \circ \phi_w \circ \phi_z)(\zeta) \frac{(1 - \abs{\phi_z(w)}^2)^{(\nu+n+1)/p}}{(1 - \sp{\zeta}{\phi_z(w)})^{2(\nu+n+1)/p}} \left(\frac{1 - \sp{w}{z}}{\abs{1 - \sp{w}{z}}}\right)^{2(\nu+n+1)/p}\]
for all $w,z \in \B^n$. Using Cartan's theorem, i.e.~$\phi_w \circ \phi_z = V \circ \phi_{\phi_z(w)}$ for some unitary map $V \from \C^n \to \C^n$, we get
\[U_z^pU_w^p = \left(\frac{1 - \sp{w}{z}}{\abs{1 - \sp{w}{z}}}\right)^{2(\nu+n+1)/p}U_{\phi_z(w)}^pV_*,\]
where $V_*f := f \circ V$ is a composition operator and, by taking inverses, also
\[U_w^pU_z^p = \left(\frac{1 - \sp{z}{w}}{\abs{1 - \sp{w}{z}}}\right)^{2(\nu+n+1)/p}V_*^{-1}U_{\phi_z(w)}^p.\]
Choose a net $(z_{\gamma})$ that converges to $x \in \Mf \setminus \B^n$. Then
\begin{align*}
U_w^pU_{z_{\gamma}}^pAU_{z_{\gamma}}^pU_w^p|_{A^p_{\nu}} &= \left(\frac{1 - \sp{z_{\gamma}}{w}}{\abs{1 - \sp{w}{z_{\gamma}}}}\right)^{2(\nu+n+1)/p}\left(\frac{1 - \sp{w}{z_{\gamma}}}{\abs{1 - \sp{w}{z_{\gamma}}}}\right)^{2(\nu+n+1)/p}V_*^{-1}U_{\phi_{z_{\gamma}}(w)}^pA\\
&\quad \cdot U_{\phi_{z_{\gamma}}(w)}^pV_*|_{A^p_{\nu}}\\
&= V_*^{-1}U_{\phi_{z_{\gamma}}(w)}^pAU_{\phi_{z_{\gamma}}(w)}^pV_*|_{A^p_{\nu}},
\end{align*}
where $V_*$ of course depends on $w$ and $z_{\gamma}$. As the unitary group of $\C^n$ is compact, we may assume that $V$ converges to some unitary map $\tilde{V}$ and hence $V_*|_{A^p_{\nu}}$ converges strongly to $\tilde{V}_*|_{A^p_{\nu}}$ and $V_*^{-1}|_{A^p_{\nu}}$ converges strongly to $\tilde{V}^{-1}_*|_{A^p_{\nu}}$ as $z_{\gamma} \to x$. Similarly, using Proposition \ref{prop7}, we may assume that
\[U_{\phi_{z_{\gamma}}(w)}^pAU_{\phi_{z_{\gamma}}(w)}^p|_{A^p_{\nu}} = U_{\phi_{z_{\gamma}}(w)}^pA(U_{\phi_{z_{\gamma}}(w)}^q|_{A^q_{\nu}})^*(U_{\phi_{z_{\gamma}}(w)}^q|_{A^q_{\nu}})^*U_{\phi_{z_{\gamma}}(w)}^p|_{A^p_{\nu}} = A_{\phi_{z_{\gamma}}(w)}T_{b_{\phi_{z_{\gamma}}(w)}}\] 
converges strongly to $A_yT_{b_y}$ for some $y \in \Mf$. Since $\phi_{z_{\gamma}}(w) \to \partial\B^n$ as $z_{\gamma} \to \partial\B^n$, it is clear that $y \in \Mf \setminus \B^n$. As the limit of a strongly convergent net is unique and $\lim\limits_{z_{\gamma} \to x} U_{z_{\gamma}}^pAU_{z_{\gamma}}^p = A_xT_{b_x}$, we obtain
\[U_w^pA_xT_{b_x}U_w^p|_{A^p_{\nu}} = \tilde{V}_*^{-1}A_yT_{b_y}\tilde{V}_*|_{A^p_{\nu}}.\]
Since $P_{\alpha}$ commutes with both $U_w^p$ and $\tilde{V}_*$ (the former was shown in the proof of Proposition \ref{prop4}, the latter is immediate from the definition of $P_{\alpha}$ and $\tilde{V}_*$), this implies
\[U_w^p\hat{A}_xU_w^p = \tilde{V}_*^{-1}\hat{A}_y\tilde{V}_*.\]

Now let $f \in L^p_{\nu}$ with $\supp f \subseteq D(w,r)$ for some $w \in \B^n$ and $r > 0$. Set $g := \tilde{V}_*U_w^pf$. Then clearly $\norm{g} = \norm{f}$ and $\|\hat{A}_xf\| = \|\hat{A}_yg\|$. As $D(0,r)$ is invariant under $\tilde{V}$, it remains to show that $\supp U_w^pf \subseteq D(0,r)$. But this is clear since $\phi_w(D(0,r)) = D(w,r)$.

For the second assertion consider $f \in L^p_{\nu}$ with $\supp f \subseteq D(0,r)$. Then $g := \tilde{V}_*U_w^pf$ satisfies $\norm{g} = \norm{f}$, $\|\hat{A}_xf\| = \|\hat{A}_yg\|$ and $\supp g \subseteq \tilde{V}^{-1}(\phi_w^{-1}(D(0,r))) \subseteq D(0,r + \beta(0,w))$ as above.
\end{proof}

In the next lemma we show that the infimum $\inf\set{\nu(\hat{A}_x) : x \in \Mf \setminus \B^n}$ is actually attained by a certain limit operator $\hat{A}_y$. The proof is very much the same as the proofs of \cite[Theorem 3.2]{HaLiSe} and \cite[Theorem 8]{LiSe} and is based on a miraculous procedure invented by Markus Seidel. We therefore refer to \cite{LiSe} for a helpful illustration of the main idea.

\begin{lem} \label{lem6}
Let $p \leq 2$, $\alpha = (\frac{2}{p} - 1)g + \frac{2\nu}{p}$ and $A \in \Tf_{p,\nu}$. Then there exists a $y \in \Mf \setminus \B^n$ with
\[\nu(\hat{A}_y) = \inf\set{\nu(\hat{A}_x) : x \in \Mf \setminus \B^n}.\]
\end{lem}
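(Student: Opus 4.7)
Set $\mu := \inf\set{\nu(\hat{A}_x) : x \in \Mf \setminus \B^n}$; the task is to produce $y \in \Mf \setminus \B^n$ with $\nu(\hat{A}_y) \leq \mu$, the reverse inequality being automatic. My plan follows the standard three-step Seidel recipe: minimize, localize, shift. Fix $\epsilon > 0$ and use Proposition \ref{prop5} to pick $t \in (0,1)$ so that $\nu_t(\hat{A}_x|_F) \leq \nu(\hat{A}_x|_F) + \epsilon$ uniformly in $x \in \Mf \setminus \B^n$ and in Borel sets $F \subseteq \B^n$; this freezes the support radius $r_t$. Select a sequence $(x^{(k)}) \subset \Mf \setminus \B^n$ with $\nu(\hat{A}_{x^{(k)}}) \to \mu$, and for each $k$ choose a unit vector $f_k$ supported in some ball $D(w_k, r_t)$ with $\norm{\hat{A}_{x^{(k)}} f_k} \leq \mu + \epsilon + o(1)$. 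Applying Lemma \ref{lem5} then transports the situation to the origin: I obtain $y^{(k)} \in \Mf \setminus \B^n$ and unit vectors $g_k$ with $\supp g_k \subseteq D(0, r_t)$---now a \emph{fixed} compact subset of $\B^n$---still satisfying $\norm{\hat{A}_{y^{(k)}} g_k} \leq \mu + \epsilon + o(1)$.

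Next I pass to the limit. By compactness of $\Mf$ there is a subnet along which $y^{(k)} \to y_\epsilon \in \Mf \setminus \B^n$, and Proposition \ref{prop7} yields $\hat{A}_{y^{(k)}} \to \hat{A}_{y_\epsilon}$ strongly. Since $L^p_\nu$ is reflexive and the $g_k$ are uniformly bounded, a further subnet satisfies $g_k \rightharpoonup g$ weakly. The decisive local compactness input is Proposition \ref{prop2.5}: $P_\alpha M_{\chi_{D(0,r_t)}}$ is compact, so $P_\alpha g_k \to P_\alpha g$ in norm. Decomposing $\hat{A}_{y^{(k)}} g_k = A_{y^{(k)}} T_{b_{y^{(k)}}} P_\alpha g_k + Q_\alpha g_k$, the first summand converges in norm to $A_{y_\epsilon} T_{b_{y_\epsilon}} P_\alpha g$ (strong operator convergence applied to a norm-convergent input, together with the uniform norm bound from Proposition \ref{prop7}), while the second converges weakly to $Q_\alpha g$. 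Hence $\hat{A}_{y^{(k)}} g_k \rightharpoonup \hat{A}_{y_\epsilon} g$, and weak lower semicontinuity of the norm gives $\norm{\hat{A}_{y_\epsilon} g} \leq \mu + \epsilon$.

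The main obstacle is precisely that the weak limit $g$ may satisfy $\norm{g} < 1$, in which case the estimate above does not translate into a bound on $\nu(\hat{A}_{y_\epsilon})$. This concentration-vanishing issue is exactly what Seidel's ``miraculous procedure'' from \cite{LiSe} resolves (and is used in the same spirit in \cite[Theorem 3.2]{HaLiSe}): one iterates the shift-and-localize argument, exploiting the self-similarity of the operator spectrum (a limit operator of $\hat{A}_{y_\epsilon}$ is itself a limit operator of $\hat{A}$). Whenever mass escapes in the weak limit, one re-centers around a further limit operator that captures it; Seidel's combinatorial bookkeeping then shows that this diagonal process terminates with a genuine unit-norm witness for some limit operator achieving the bound $\mu + \epsilon$. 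Finally, sending $\epsilon_n \downarrow 0$ and extracting one last convergent subnet $y_{\epsilon_n} \to y$ in the compact space $\Mf$ produces the required $y \in \Mf \setminus \B^n$ with $\nu(\hat{A}_y) = \mu$.
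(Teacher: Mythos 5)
Your setup is on the right track: you correctly identify the three ingredients (Proposition \ref{prop5} to control $\nu_t$, Lemma \ref{lem5} to recentre at the origin, and compactness of $P_\alpha M_{\chi_{D(0,R)}}$ for a local limit argument), and you correctly locate the real difficulty — that a weak limit of the recentred witnesses $g_k$ may vanish or lose mass. However, at exactly that point you write that Seidel's ``combinatorial bookkeeping then shows that this diagonal process terminates'' without giving the argument. That is not a small omission: the iterated shift-and-localize scheme, together with the propagation of the \emph{two-sided} bounds on $\nu(\,\cdot\,|_{D(0,r)})$ through the chain $y_n^0, y_n^1, \ldots, y_n^n$ (upper bounds forward via $\nu_{t_{n-k}}(\hat{A}_{y_n^{k-1}}|_{D(0,r_{t_{n-k+1}})})$, lower bounds backward via the second part of Lemma~\ref{lem5} and the telescoping radius estimate $2r_{t_l}+2r_{t_{l-1}}+\cdots+r_{t_0} \le 4 r_{t_l}$), is the entire content of the lemma. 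The paper never takes a weak limit of the $g_k$ and never needs reflexivity; it avoids your obstacle by construction rather than by appealing to an external resolution.

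Your final step is also unsound as written. You propose to send $\epsilon_n \downarrow 0$, extract a convergent subnet $y_{\epsilon_n} \to y$ in $\Mf$, and conclude $\nu(\hat{A}_y) \le \mu$. But $x \mapsto \nu(\hat{A}_x)$ is not lower semicontinuous along $*$-strongly convergent nets (for instance, a strongly convergent sequence of operators with shrinking kernels shows $\nu$ can jump \emph{up} in the limit), so $\nu(\hat{A}_{y_{\epsilon_n}}) \le \mu + \epsilon_n$ does not transfer to the limit point. The paper sidesteps this by never invoking semicontinuity of the global $\nu$: it passes to the limit only through the \emph{localized} quantity $\nu(\hat{A}_{\cdot}|_{D(0,4r_{t_l})})$, which is genuinely norm-continuous along the net $y_{n_\gamma} \to y$ because $(\hat{A}_{y_{n_\gamma}} - \hat{A}_y) M_{\chi_{D(0,4r_{t_l})}} \to 0$ in operator norm (Propositions~\ref{prop2.5} and~\ref{prop6}). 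The estimate $\nu(\hat{A}_y) \le \mu + 2^{-l+1}$ then holds for one fixed $y$ and all $l$, and letting $l \to \infty$ finishes the proof without any second extraction in $\epsilon$.
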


\begin{proof}
Recall $r_t = \sup\limits_{j \in \N} \diam_{\beta}\supp \varphi_{j,t}$. Proposition \ref{prop5} yields a sequence $(t_k)_{k \in \N_0}$ with $r_{t_{k+1}} > 2r_{t_k}$ and
\[\nu_{t_k}(B|_F) \leq \nu(B|_F) + 2^{-(k+1)}\]
for all $k \in \N_0$, $F \subseteq \B^n$ and $B \in \{\hat{A}\} \cup \set{\hat{A}_x : x \in \Mf \setminus \B^n}$. Choose a sequence $(x_n)_{n \in \N}$ in $\Mf \setminus \B^n$ such that
\[\lim\limits_{n \to \infty} \nu(\hat{A}_{x_n}) = \inf\set{\nu(\hat{A}_x) : x \in \Mf \setminus \B^n}.\]
Now, for every $n \in \N$ there exists an $f_n^0 \in L^p_{\nu}$ with $\|f_n^0\| = 1$, $\supp f_n^0$ contained in some $D(w,r_{t_n})$ and
\[\norm{\hat{A}_{x_n}f_n^0} \leq \nu_{t_n}(\hat{A}_{x_n}) + 2^{-(n+1)} \leq \nu(\hat{A}_{x_n}) + 2^{-n}.\]
By Lemma \ref{lem5}, we can choose a $y_n^0 \in \Mf \setminus \B^n$ and a $g_n^0 \in L^p_{\nu}$ with $\|g_n^0\| = 1$ and $\supp g_n^0 \subseteq D(0,r_{t_n})$ such that
\[\norm{\hat{A}_{y_n^0}g_n^0} = \norm{\hat{A}_{x_n}f_n^0} \leq \nu(\hat{A}_{x_n}) + 2^{-n}.\]
Furthermore, for every $n \in \N$ we can find $f_n^1 \in L^p_{\nu}$ with $\|f_n^1\| = 1$, $\supp f_n^1 \subseteq D(w,r_{t_{n-1}}) \cap D(0,r_{t_n})$ for some $w \in D(0,r_{t_n}+r_{t_{n-1}})$ and
\[\norm{\hat{A}_{y_n^0}f_n^1} \leq \nu_{t_{n-1}}(\hat{A}_{y_n^0}|_{D(0,r_{t_n})}) + 2^{-n} \leq \nu(\hat{A}_{y_n^0}|_{D(0,r_{t_n})}) + 2^{-n+1}.\]
Thus, using Lemma \ref{lem5} again, we can choose $y_n^1 \in \Mf \setminus \B^n$ and $g_n^1 \in L^p_{\nu}$ with $\|g_n^1\| = 1$ and $\supp g_n^1 \subseteq D(0,r_{t_{n-1}})$ such that
\[\norm{\hat{A}_{y_n^1}g_n^1} = \norm{\hat{A}_{y_n^0}f_n^1} \leq \nu(\hat{A}_{y_n^0}|_{D(0,r_{t_n})}) + 2^{-n+1}.\]
In particular,
\[\nu(\hat{A}_{y_n^1}|_{D(0,r_{t_{n-1}})}) \leq \nu(\hat{A}_{y_n^0}|_{D(0,r_{t_n})}) + 2^{-n+1} \leq \nu(\hat{A}_{x_n}) + 2^{-n+1} + 2^{-n}.\]
Repeating this argument, we obtain $y_n^k \in \Mf \setminus \B^n$, $w \in D(0,r_{t_{n-k+1}} + r_{t_{n-k}})$ and $f_n^k \in L^p_{\nu}$ with $\norm{f_n^k} = 1$, $\supp f_n^k \subseteq D(w,r_{t_{n-k}}) \cap D(0,r_{t_{n-k+1}})$ and
\[\norm{\hat{A}_{y_n^{k-1}}f_n^k} \leq \nu_{t_{n-k}}(\hat{A}_{y_n^{k-1}}|_{D(0,r_{t_{n-k+1}})}) + 2^{-n+k-1} \leq \nu(\hat{A}_{y_n^{k-1}}|_{D(0,r_{t_{n-k+1}})}) + 2^{-n+k}\]
for $k = 2, \ldots, n$.
Moreover, we obtain $g_n^k \in L^p_{\nu}$ with $\|g_n^k\| = 1$ and $\supp g_n^k \subseteq D(0,r_{t_{n-k}})$ such that
\[\norm{\hat{A}_{y_n^k}g_n^k} = \norm{\hat{A}_{y_n^{k-1}}f_n^k} \leq \nu(\hat{A}_{y_n^{k-1}}|_{D(0,r_{t_{n-k+1}})}) + 2^{-n+k}.\]
In particular,
\begin{align*}
\nu(\hat{A}_{y_n^k}|_{D(0,r_{t_{n-k}})}) &\leq \nu(\hat{A}_{y_n^{k-1}}|_{D(0,r_{t_{n-k+1}})}) + 2^{-n+k} \leq \ldots \leq \nu(\hat{A}_{y_n^0}|_{D(0,r_{t_n})}) + 2^{-n+k} + \ldots + 2^{-n+1}\\
&\leq \nu(\hat{A}_{x_n}) + 2^{-n+k} + \ldots + 2^{-n+1} + 2^{-n} \leq \nu(\hat{A}_{x_n}) + 2^{-n+k+1}.
\end{align*}
Furthermore, by repeatedly applying the second part of Lemma \ref{lem5} and collecting all the shifts being made during the process above, we get
\begin{align*}
\nu(\hat{A}_{y_n^{n-l}}|_{D(0,r_{t_l})}) &\geq \nu(\hat{A}_{y_n^{n-l+1}}|_{D(0,r_{t_l} + r_{t_l} + r_{t_{l-1}})}) = \nu(\hat{A}_{y_n^{n-l+1}}|_{D(0,2r_{t_l} + r_{t_{l-1}})})\\
&\geq \nu(\hat{A}_{y_n^{n-l+2}}|_{D(0,2r_{t_l} + r_{t_{l-1}} + r_{t_{l-1}} + r_{t_{l-2}})}) = \nu(\hat{A}_{y_n^{n-l+2}}|_{D(0,2r_{t_l} + 2r_{t_{l-1}} + r_{t_{l-2}})})\\
&\geq \ldots \geq \nu(\hat{A}_{y_n^n}|_{D(0,2r_{t_l} + 2r_{t_{l-1}} + 2r_{t_{l-2}} + \ldots + 2r_{t_1} + r_{t_0})}) \geq \nu(\hat{A}_{y_n^n}|_{D(0,4r_{t_l})})
\end{align*}
for fixed $l \leq n$, using $r_{t_{k+1}} > 2r_{t_k}$ for all $k$.

Now set $y_n:= y_n^n$. By Proposition \ref{prop7}, the sequence $(A_{y_n}T_{b_{y_n}})_{n \in \N}$ has a strongly convergent subnet, denoted by $(A_{y_{n_{\gamma}}}T_{b_{y_{n_{\gamma}}}})$, that converges to $A_yT_{b_y}$ for some $y \in \Mf \setminus \B^n$. Now since $\overline{D(0,4r_{t_l})}$ is a compact set, $P_{\alpha}M_{\chi_{D(0,4r_{t_l})}} \in \Lc(L^p_{\nu})$ is a compact operator (see Proposition \ref{prop2.5}) and therefore
\[\norm{(\hat{A}_{y_{n_{\gamma}}} - \hat{A}_y)M_{\chi_{D(0,4r_{t_l})}}} = \norm{(A_{y_{n_{\gamma}}}T_{b_{y_{n_{\gamma}}}} - A_yT_{b_y})P_{\alpha}M_{\chi_{D(0,4r_{t_l})}}} \to 0.\]
This also implies
$\nu(\hat{A}_{y_{n_{\gamma}}}|_{D(0,4r_{t_l})}) \to \nu(\hat{A}_y|_{D(0,4r_{t_l})})$ by Proposition \ref{prop6}. Thus
\begin{align*}
\nu(\hat{A}_y) &\leq \nu(\hat{A}_y|_{D(0,4r_{t_l})}) = \lim\limits_{\gamma} \nu(\hat{A}_{y_{n_{\gamma}}}|_{D(0,4r_{t_l})}) \leq \lim\limits_{\gamma} \nu(\hat{A}_{y_{n_{\gamma}}^{n_{\gamma}-l}}|_{D(0,r_{t_l})}) \leq \lim\limits_{\gamma} \nu(\hat{A}_{x_{n_{\gamma}}}) + 2^{-l+1}\\
&= \inf\set{\nu(\hat{A}_x) : x \in \Mf \setminus \B^n} + 2^{-l+1}.
\end{align*}
Sending $l \to \infty$, we obtain $\nu(\hat{A}_y) = \inf\set{\nu(\hat{A}_x) : x \in \Mf \setminus \B^n}$ as claimed.
\end{proof}

Now we can summarize this section with the main result of this paper.

\begin{mthm} \label{thm4}
Let $A \in \Tf_{p,\nu}$. Then the following are equivalent:
\begin{itemize}
\item[$(i)$] $A$ is Fredholm,
\item[$(ii)$] $A_x$ is invertible and $\norm{A_x^{-1}} \leq \norm{P_{\nu}}\norm{(A + \Kc(A^p_{\nu}))^{-1}}$ for all $x \in \Mf \setminus \B^n$,
\item[$(iii)$] $A_x$ is invertible for all $x \in \Mf \setminus \B^n$ and $\sup\limits_{x \in \Mf \setminus \B^n} \norm{A_x^{-1}} < \infty$,
\item[$(iv)$] $A_x$ is invertible for all $x \in \Mf \setminus \B^n$,
\end{itemize}
\end{mthm}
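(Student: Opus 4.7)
The plan is to close the cycle of implications. The implications (ii) $\Rightarrow$ (iii) $\Rightarrow$ (iv) are immediate: the constant $\norm{P_\nu}\norm{(A+\Kc(A^p_\nu))^{-1}}$ appearing in (ii) is uniform in $x$, and (iii) obviously implies (iv). Moreover, (i) $\Rightarrow$ (ii) is exactly Theorem \ref{thm2}, applied to each net $(z_\gamma)$ converging to a boundary point $x \in \Mf \setminus \B^n$ (noting that the bound there depends only on a regularizer of $A$, not on the chosen net or its limit), and (iii) $\Rightarrow$ (i) is Theorem \ref{thm1}. Hence the entire novelty of the statement is the implication (iv) $\Rightarrow$ (iii), i.e., the redundancy of the uniform boundedness condition.

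To prove (iv) $\Rightarrow$ (iii), I would first reduce to the case $p \leq 2$ by passing to the adjoint: since $(A^*)_x = (A_x)^*$, the hypothesis (iv) transfers to $A^* \in \Tf_{q,\nu}$, and a uniform bound on $\norm{((A_x)^*)^{-1}} = \norm{A_x^{-1}}$ transfers back. Assuming now $p \leq 2$ with $\alpha = (\tfrac{2}{p} - 1)(n+1) + \tfrac{2\nu}{p}$ as fixed in Section \ref{Limit_operators}, set $\hat{A}_x := A_xT_{b_x}P_\alpha + Q_\alpha$. Since $T_{b_x}$ is always invertible and, by assumption, every $A_x$ is invertible, each $\hat{A}_x$ is invertible with inverse $T_{b_x}^{-1}A_x^{-1}P_\alpha + Q_\alpha$. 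In particular, for every $x \in \Mf \setminus \B^n$,
\[\nu(\hat{A}_x) \;=\; \norm{\hat{A}_x^{-1}}^{-1} \;>\; 0.\]

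The crucial step is then to invoke Lemma \ref{lem6}: it produces a specific $y \in \Mf \setminus \B^n$ at which the infimum $\inf_{x} \nu(\hat{A}_x)$ is attained. Since $\hat{A}_y$ is invertible, $\nu(\hat{A}_y) > 0$, so $\inf_{x} \nu(\hat{A}_x) > 0$, i.e.\ $\sup_{x} \norm{\hat{A}_x^{-1}} < \infty$. Combining this with the uniform bound $\sup_{x \in \Mf} \norm{T_{b_x}} < \infty$ (which follows from Proposition \ref{prop7} via the uniform boundedness principle) and the identity $A_x^{-1} = T_{b_x}\hat{A}_x^{-1}|_{A^p_\nu}$, one obtains $\sup_{x} \norm{A_x^{-1}} < \infty$, which is precisely (iii).

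The real difficulty is already absorbed into Lemma \ref{lem6}, whose proof combines the shift invariance of Lemma \ref{lem5} with the $\nu$-versus-$\nu_t$ approximation of Proposition \ref{prop5} through a diagonal extraction. What remains in the proof of Theorem \ref{thm4} is essentially a clean repackaging of the prior results: the duality reduction to $p \leq 2$, the characterization $\norm{T^{-1}} = \nu(T)^{-1}$ for invertible $T$, and the bookkeeping needed to pass between $\hat{A}_x$ acting on $L^p_\nu$ and $A_x$ acting on $A^p_\nu$ via the controlled invertibility of $T_{b_x}$.
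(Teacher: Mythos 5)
Your proposal is correct and follows essentially the same route as the paper: the cycle of implications is closed in the same order, (i)$\Rightarrow$(ii) via Theorem \ref{thm2}, (iii)$\Rightarrow$(i) via Theorem \ref{thm1}, and the heart of the matter, (iv)$\Rightarrow$(iii), is handled identically by reducing to $p\le 2$ via duality, passing to $\hat A_x$, using $\nu(B)=\norm{B^{-1}}^{-1}$, invoking Lemma \ref{lem6} to see the infimum of $\nu(\hat A_x)$ is attained and hence positive, and then transferring back to $A_x$ through $T_{b_x}$. The only (harmless) additions you make beyond the paper's proof are the explicit remark that the bound in Theorem \ref{thm2} is net-independent and the invocation of uniform boundedness for $\sup_x\norm{T_{b_x}}<\infty$, both of which the paper leaves implicit.
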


\begin{proof}
That $(i)$ implies $(ii)$ follows from Proposition \ref{prop7} and Theorem \ref{thm2}, whereas $(ii)$ obviously implies $(iii)$ and $(iii)$ implies $(iv)$. It remains to show that $(iv)$ implies $(i)$. By duality, it suffices to show the case $p \leq 2$. If $A_x$ is invertible, then $(\hat{A}_x)^{-1} = T_{b_x}^{-1}A_x^{-1}P_{\alpha} + Q_{\alpha}$, which imnplies that $(\hat{A}_x)^{-1}$ is also invertible. Now observe that $\nu(B) = \norm{B^{-1}}^{-1} > 0$ whenever an operator $B \neq 0$ is invertible (see e.g.~\cite[Lemma 2.35]{Lindner} for a quick proof). Thus by Lemma \ref{lem6}, $\sup\limits_{x \in \Mf \setminus \B^n} \|\hat{A}_x^{-1}\| < \infty$. Moreover, if $\hat{A}_x$ is invertible, then $T_{b_x}\hat{A}_x^{-1}|_{A^p_{\nu}}$ is the inverse of $A_x$ and so $\sup\limits_{x \in \Mf \setminus \B^n} \norm{A_x^{-1}} < \infty$. Therefore $(iv)$ implies $(iii)$. That $(iii)$ implies $(i)$ is Theorem \ref{thm1}.
\end{proof}

Of course, we get the following corollary for the essential spectrum:

\begin{cor} \label{cor3}
Let $A \in \Tf_{p,\nu}$. Then
\[\spec_{\ess}(A) = \bigcup\limits_{x \in \Mf \setminus \B^n} \spec(A_x).\]
\end{cor}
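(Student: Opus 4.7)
My plan is to reduce the corollary directly to Theorem \ref{thm4}, applied to each shifted operator $A - \lambda I$. Since $I = T_1 \in \Tf_{p,\nu}$ and $\Tf_{p,\nu}$ is a closed subalgebra of $\Lc(A^p_{\nu})$, the operator $A - \lambda I$ again lies in $\Tf_{p,\nu}$ for every $\lambda \in \C$, so Theorem \ref{thm4} applies to it.

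By definition of the essential spectrum, $\lambda \in \spec_{\ess}(A)$ iff $A - \lambda I$ is not Fredholm, which by the equivalence $(i) \Leftrightarrow (iv)$ of Theorem \ref{thm4} happens iff there is some $x \in \Mf \setminus \B^n$ at which $(A - \lambda I)_x$ fails to be invertible on $A^p_{\nu}$. Since $B \mapsto B_x$ is the $*$-strong limit of the linear maps $B \mapsto U_{z_\gamma}^p B (U_{z_\gamma}^q|_{A^q_{\nu}})^*$ (Proposition \ref{prop7}), it is $\C$-linear, so $(A - \lambda I)_x = A_x - \lambda I$ as operators on $A^p_{\nu}$. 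Hence the non-invertibility of $(A - \lambda I)_x$ is precisely the condition $\lambda \in \spec(A_x)$, and taking the union over $x \in \Mf \setminus \B^n$ yields
\[\spec_{\ess}(A) = \bigcup_{x \in \Mf \setminus \B^n} \spec(A_x).\]

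There is no substantial obstacle here: once Theorem \ref{thm4} is in hand, the corollary is a purely formal consequence of the linearity of the limit-operator map together with the fact that $\Tf_{p,\nu}$ contains the identity. The only point worth a brief verification is the identification $(A - \lambda I)_x = A_x - \lambda I$, after which everything reduces to bookkeeping.
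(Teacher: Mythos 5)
There is a real gap in the step ``$(A-\lambda I)_x = A_x - \lambda I$''. You are right that $B \mapsto B_x$ is $\C$-linear, so $(A-\lambda I)_x = A_x - \lambda I_x$; but you then silently use $I_x = I$, and this is false when $p \neq 2$. Indeed, by the paper's definition the limit operator of the identity is
\[
I_x = \slim_{z_\gamma \to x} U_{z_\gamma}^p\, I\, (U_{z_\gamma}^q|_{A^q_\nu})^* = \slim_{z_\gamma \to x} T_{b_{z_\gamma}}^{-1} = T_{b_x}^{-1},
\]
using the identity $T_{b_z}^{-1} = U_z^p(U_z^q|_{A^q_\nu})^*$ recorded in Section \ref{Limit_operators}. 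The operator $T_{b_x}$ equals $I$ only when $p=2$ (since $b_z \equiv 1$ exactly when $1/q - 1/p = 0$); for $p \neq 2$ the map $A \mapsto A_x$ is linear but not unital.

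Carrying the correct term through gives $(A-\lambda I)_x = A_x - \lambda T_{b_x}^{-1} = (A_xT_{b_x} - \lambda I)\,T_{b_x}^{-1}$, and since $T_{b_x}$ is invertible, Theorem \ref{thm4} yields
\[
\spec_{\ess}(A) = \bigcup_{x \in \Mf \setminus \B^n} \spec(A_x T_{b_x}),
\]
i.e.\ the spectra of the \emph{normalized} limit operators $A_x T_{b_x}$ (which are what the unital homomorphism $A \mapsto A_xT_{b_x}$ produces; note $(AB)_x = A_xT_{b_x}B_x$). To arrive at the statement $\bigcup_x \spec(A_x)$ as written in the corollary you would still need an argument that this union coincides with $\bigcup_x \spec(A_x T_{b_x})$ for $p \neq 2$; this is not a formal consequence of linearity and should be addressed explicitly (or the corollary interpreted with the normalized limit operators, as the computation $(T_f)_x = x(f)\,T_{b_x}^{-1}$ in Proposition \ref{prop8} implicitly does). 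Your outline is otherwise sound — passing from essential spectrum to Fredholmness of $A - \lambda I$ and invoking Theorem \ref{thm4} is exactly the intended route — but the missing $T_{b_x}$ factor is not mere bookkeeping and must be dealt with.
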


\section{Norm estimates (unit ball)} \label{Norm_estimates}

In this section we prove a similar result for the essential norm of an operator $A \in \Tf_{p,\nu}$. For the most part this is just a modification of the proofs in Section \ref{Limit_operators}. Recall that $r_t = \sup\limits_{j \in \N} \diam_{\beta}\supp \varphi_{j,t}$. For $t \in (0,1)$, $\alpha \geq \nu$, $A \in \Lc(A^p_{\nu})$ and a Borel set $F \subseteq \B^n$ we define
\[\vertiii{AP_{\alpha}|_F}_t := \sup\set{\norm{AP_{\alpha}f} : f \in L^p_{\nu}, \norm{f} = 1, \supp f \subseteq D(w,r_t) \cap F \text{ for some } w \in \B^n}\]
and
\[\norm{AP_{\alpha}|_F} := \sup\set{\norm{AP_{\alpha}f} : f \in L^p_{\nu}, \norm{f} = 1, \supp f \subseteq F}.\]

\begin{prop} \label{prop9}
Let $p \leq 2$, $\alpha = (\frac{2}{p} - 1)(n+1) + \frac{2\nu}{p}$ and $A \in \Tf_{p,\nu}$. Then for every $\epsilon > 0$ there exists a $t \in (0,1)$ such that for all Borel Sets $F \subseteq \B^n$ and all operators $B$ in the set
\[\{A\} \cup \set{A_xT_{b_x} : x \in \Mf \setminus \B^n}\]
it holds
\[\|BP_{\alpha}|_F\| \geq \vertiiis{BP_{\alpha}|_F}_t \geq \|BP_{\alpha}|_F\| - \epsilon.\]
\end{prop}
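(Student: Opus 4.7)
The first inequality is immediate, since the admissible test functions for $\vertiiis{BP_{\alpha}|_F}_t$ form a subset of those for $\|BP_{\alpha}|_F\|$. For the nontrivial direction, the plan is to run the argument of Proposition \ref{prop5} with all inequalities reversed. Fix $\epsilon > 0$. By Theorem \ref{thm0} and Proposition \ref{prop0}$(ii)$, $AP_{\alpha} = \hat{A}P_{\alpha}$ is band-dominated; repeating the weak-operator-topology argument at the start of the proof of Proposition \ref{prop5} also shows that every $A_xT_{b_x}P_{\alpha} = \hat{A}_xP_{\alpha}$ is band-dominated. The crucial output of that construction is a \emph{uniform} band-operator approximation: there is an $\omega > 0$ and, for every $B \in \{A\} \cup \set{A_xT_{b_x} : x \in \Mf \setminus \B^n}$, a band operator $D^B$ of band width at most $\omega$ with $\|BP_{\alpha} - D^B\|$ smaller than any prescribed $\delta > 0$. (First approximate $\hat{A}$ by a band operator $(\hat{A})_n$ with $\|\hat{A} - (\hat{A})_n\| < \delta/2$; as in Proposition \ref{prop5}, $(\hat{A}_x)_n$ is then a band operator of the same band width with $\|\hat{A}_x - (\hat{A}_x)_n\| \leq \|\hat{A} - (\hat{A})_n\|$; finally multiply by a band-operator approximation of $P_{\alpha}$, which exists by Lemma \ref{lem1} and Proposition \ref{BDO_characterization}.)

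Take $f \in L^p_{\nu}$ with $\|f\| = 1$ and $\supp f \subseteq F$. Since $\sum_j \varphi_{j,t} = 1$ we have $\|BP_{\alpha}f\|^p = \sum_j \|M_{\varphi_{j,t}^{1/p}}BP_{\alpha}f\|^p$. Using $\varphi_{j,t}^{1/p}\psi_{j,t} = \varphi_{j,t}^{1/p}$ and $\varphi_{j,t}^{1/p}(1 - \psi_{j,t}) = 0$, the identity
\[M_{\varphi_{j,t}^{1/p}}BP_{\alpha}f = BP_{\alpha}M_{\varphi_{j,t}^{1/p}}f + [M_{\varphi_{j,t}^{1/p}},BP_{\alpha}]M_{\psi_{j,t}}f + M_{\varphi_{j,t}^{1/p}}BP_{\alpha}M_{1 - \psi_{j,t}}f\]
together with Minkowski's inequality in $\ell^p(\N)$ delivers
\[\|BP_{\alpha}f\| \leq \left(\sum_j \|BP_{\alpha}M_{\varphi_{j,t}^{1/p}}f\|^p\right)^{1/p} + E_1(t) + E_2(t),\]
where $E_1(t), E_2(t)$ are the $\ell^p(\N)$-norms of the commutator term and the off-diagonal term, respectively. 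The term $E_2(t) \to 0$ as $t \to 0$ uniformly in $B$ and in $f$ with $\|f\| = 1$: once $1/(3t) > \omega$ one has $M_{\varphi_{j,t}^{1/p}}D^BM_{1 - \psi_{j,t}} = 0$, so the argument from the proof of $(i) \Rightarrow (ii)$ in Proposition \ref{BDO_characterization} (using Proposition \ref{AuxiliaryProp2}) bounds the remaining contribution by a constant times $\|BP_{\alpha} - D^B\|$. For $E_1(t)$, Proposition \ref{AuxiliaryProp2} yields $(\sum_j \|M_{\psi_{j,t}}f\|^p)^{1/p} \leq N^{1/p}$, so it suffices that $\sup_j \|[M_{\varphi_{j,t}^{1/p}},BP_{\alpha}]\| \to 0$ uniformly; this follows from Lemma \ref{lem3} applied to the band operator $D^B$ (the functions $\varphi_{j,t}^{1/p}$ satisfy its hypotheses by the verification carried out in the proof of Proposition \ref{prop5}), combined with the trivial bound $\sup_j \|[M_{\varphi_{j,t}^{1/p}},BP_{\alpha} - D^B]\| \leq 2\|BP_{\alpha} - D^B\|$ and the uniform boundedness of $\|D^B\|$.

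Now choose $t$ so small that $E_1(t) + E_2(t) < \epsilon/2$ for every admissible $B$ and unit-norm $f$, and pick $f$ with $\|BP_{\alpha}f\| \geq \|BP_{\alpha}|_F\| - \epsilon/2$. Then
\[\left(\sum_j \|BP_{\alpha}M_{\varphi_{j,t}^{1/p}}f\|^p\right)^{1/p} \geq \|BP_{\alpha}|_F\| - \epsilon,\]
while at the same time $\sum_j \|M_{\varphi_{j,t}^{1/p}}f\|^p = \|f\|^p = 1$. A pigeonhole comparison (if $\|BP_{\alpha}M_{\varphi_{j,t}^{1/p}}f\| < (\|BP_{\alpha}|_F\| - \epsilon)\|M_{\varphi_{j,t}^{1/p}}f\|$ held for every $j$, summing $p$th powers would contradict the previous display) produces a single index $j$ with $\|BP_{\alpha}M_{\varphi_{j,t}^{1/p}}f\| \geq (\|BP_{\alpha}|_F\| - \epsilon)\|M_{\varphi_{j,t}^{1/p}}f\|$. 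Normalizing $g := M_{\varphi_{j,t}^{1/p}}f / \|M_{\varphi_{j,t}^{1/p}}f\|$ gives a unit vector with $\supp g \subseteq \supp \varphi_{j,t} \cap F \subseteq D(w,r_t) \cap F$ and $\|BP_{\alpha}g\| \geq \|BP_{\alpha}|_F\| - \epsilon$, proving $\vertiiis{BP_{\alpha}|_F}_t \geq \|BP_{\alpha}|_F\| - \epsilon$. The main obstacle is securing the uniformity in $B$, and it is dispatched by the same device as in Proposition \ref{prop5}: one band-operator approximation of $\hat{A}$ simultaneously controls all $\hat{A}_x$ with a common band width and a common error.
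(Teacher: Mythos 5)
Your argument is correct and follows the paper's proof essentially step for step: the same uniform band-operator approximation with a common band width, the same $\ell^p$-Minkowski decomposition of $\|BP_{\alpha}f\|$ into a main term, a commutator term handled by Lemma \ref{lem3}, and an off-diagonal term killed by the band structure, and the same pigeonhole step at the end. The only organizational difference is cosmetic: you run the estimate directly on $BP_{\alpha}$ and absorb the error $\|BP_{\alpha}-D^B\|$ inside each step, whereas the paper substitutes the band approximants for $BP_{\alpha}$ in the estimate and then transfers back to the original operators at the very end via Proposition \ref{prop6}; likewise you build $D^B$ by approximating $\hat{A}$ and $P_{\alpha}$ separately and multiplying, where the paper approximates $AP_{\alpha}=\hat{A}-Q_{\alpha}$ directly, but both give the same uniform band approximation.
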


\begin{proof}
The first inequality is clear by definition. For the second inequality we proceed as in the proof of Proposition \ref{prop5}. Let $A \in \Tf_{p,\nu}$ and fix $\epsilon > 0$. Observe that $AP_{\alpha} = \hat{A} - Q_{\alpha}$ is band-dominated by Theorem \ref{thm0}. We may thus choose a band operator $A_n$ of band width $\omega$ such that $\|AP_{\alpha}-A_n\| < \frac{\epsilon}{4}$. Let $x \in \Mf$. Now as in the proof of Proposition \ref{prop5}, there is a net $(z_{\gamma})$ converging to $x$ such that the net $(U_{z_{\gamma}}A_nU_{z_{\gamma}})$ converges in weak operator topology. This limit will be denoted by $(A_x)_n$. It follows that $(A_x)_n$ is a band operator of band width at most $\omega$ and $\norm{A_x - (A_x)_n} < \frac{\epsilon}{4}$. Let $B \in \set{A_n} \cup \set{(A_x)_n : x \in \Mf \setminus \B^n}$ and choose $f \in L^p_{\nu}$ with $\norm{f} = 1$ and $\supp f \subseteq F$ such that
\[\|BP_{\alpha}f\| \geq \|BP_{\alpha}|_F\| - \frac{\epsilon}{4}.\]
We can apply the same reasoning as in the proof of Proposition \ref{prop5} (just reverse the inequalities and use the reverse triangle inequality) to obtain
\[\left(\sum\limits_{j = 1}^{\infty} \norm{BP_{\alpha}M_{\varphi_{j,t}^{1/p}}f}^p\right)^{1/p} \geq \|BP_{\alpha}f\| - \frac{\epsilon}{4}\]
for sufficiently small $t$. This implies
\[(\|BP_{\alpha}|_F\| - \frac{\epsilon}{2})\left(\sum\limits_{j = 1}^{\infty} \norm{M_{\varphi_{j,t}^{1/p}}f}^p\right)^{1/p} \leq \left(\sum\limits_{j = 1}^{\infty} \norm{BP_{\alpha}M_{\varphi_{j,t}^{1/p}}f}^p\right)^{1/p}.\]
Thus there exists a $j \in \N$ such that
\[(\|BP_{\alpha}|_F\| - \frac{\epsilon}{2})\norm{M_{\varphi_{j,t}^{1/p}}f} \leq \norm{BP_{\alpha}M_{\varphi_{j,t}^{1/p}}f}\]
for sufficiently small $t$. Of course, this implies $\|BP_{\alpha}|_F\| - \epsilon \leq \vertiiis{BP_{\alpha}|_F}_t$. As $t$ does not depend on $f$ or $B$, the result follows as in the proof of Proposition \ref{prop5}.
\end{proof}

\begin{thm} \label{thm5}
Let $p \leq 2$, $\alpha = (\frac{2}{p} - 1)(n+1) + \frac{2\nu}{p}$ and $A \in \Tf_{p,\nu}$. Then
\[\frac{1}{\norm{P_{\alpha}}\norm{P_{\nu}}}\norm{A + \Kc(A^p_{\nu})} \leq \sup\limits_{x \in \Mf \setminus \B^n} \norm{A_x} \leq \norm{P_{\nu}}\norm{A + \Kc(A^p_{\nu})}.\]
\end{thm}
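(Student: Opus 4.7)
The plan is to prove the two inequalities separately.

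For the upper bound $\sup_{x \in \Mf \setminus \B^n}\norm{A_x} \leq \norm{P_\nu}\norm{A + \Kc(A^p_\nu)}$, I would exploit that $\Kc(A^p_\nu) \subset \Tf_{p,\nu}$ and $K_x = 0$ for every compact $K$ and every $x \in \Mf \setminus \B^n$ by \cite[Proposition 4.12, Theorem 5.5]{MiSuWi}, so $A_x = (A+K)_x$ for every compact $K$. Since $U_{z_\gamma}^p$ is an isometry and $(U_{z_\gamma}^q|_{A^q_\nu})^* = P_\nu(U_{z_\gamma}^q)^*|_{A^p_\nu}$ has norm at most $\norm{P_\nu}$, every element of the net $U_{z_\gamma}^p(A+K)(U_{z_\gamma}^q|_{A^q_\nu})^*$ has norm at most $\norm{P_\nu}\norm{A+K}$. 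Lower semi-continuity of the norm under $*$-strong convergence yields $\norm{A_x} \leq \norm{P_\nu}\norm{A+K}$, and taking the infimum over $K$ and the supremum over $x$ gives the bound.

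The lower bound $\norm{A + \Kc(A^p_\nu)} \leq \norm{P_\alpha}\norm{P_\nu}\sup_x \norm{A_x}$ is the harder direction and mirrors Section \ref{Limit_operators}. First I would establish a norm analog of Lemma \ref{lem5}: if $f \in L^p_\nu$ is supported in $D(w,r)$ for some $w \in \B^n$, then for every $x \in \Mf \setminus \B^n$ there exist $y \in \Mf \setminus \B^n$ and $g \in L^p_\nu$ with $\norm{g} = \norm{f}$, $\supp g \subseteq D(0,r)$ and $\norm{A_x T_{b_x} P_\alpha f} = \norm{A_y T_{b_y} P_\alpha g}$. This follows from the identity $U_w^p A_x T_{b_x} P_\alpha U_w^p = \tilde V_*^{-1} A_y T_{b_y} P_\alpha \tilde V_*$ (derived as in Lemma \ref{lem5} using $U_z^p U_w^p = c\, U_{\phi_z(w)}^p V_*$ and $P_\alpha V_* = V_* P_\alpha$), combined with the fact that $\tilde V_*$ and $U_w^p$ are isometries preserving the required support conditions. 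Then, given $\epsilon > 0$, use Proposition \ref{prop9} to pick a sequence $(t_k)$ with $r_{t_{k+1}} > 2r_{t_k}$ and $\norm{BP_\alpha|_F} \leq \vertiiis{BP_\alpha|_F}_{t_k} + 2^{-(k+1)}$ uniformly for $B \in \{A\} \cup \{A_x T_{b_x} : x \in \Mf \setminus \B^n\}$ and all Borel $F \subseteq \B^n$. Run the Seidel-style construction of Lemma \ref{lem6} with inequalities reversed and supremum in place of infimum: choose a sequence $x_n$ nearly maximizing $\norm{A_{x_n} T_{b_{x_n}} P_\alpha}$, select unit vectors $f_n^0$ supported in small balls $D(w_n, r_{t_n})$ whose images nearly saturate the norm, and iteratively shift each support towards $D(0,r_{t_k})$ via the norm analog of Lemma \ref{lem5}, replacing $x_n$ by a new boundary point at each step. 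Extracting a subnet converging to some $y \in \Mf \setminus \B^n$ and invoking Proposition \ref{prop7} (strong continuity) together with compactness of $P_\alpha M_{\chi_{D(0,R)}}$ (Proposition \ref{prop2.5}), one obtains a specific compact operator $K \in \Kc(A^p_\nu)$ with $\norm{A+K}_{A^p_\nu} \leq \norm{A_y T_{b_y} P_\alpha}_{L^p_\nu} + \epsilon$. Since $\norm{A_y T_{b_y} P_\alpha}_{L^p_\nu} \leq \norm{P_\alpha}\norm{A_y T_{b_y}}_{A^p_\nu} \leq \norm{P_\alpha}\norm{P_\nu}\norm{A_y}$ (using $\norm{T_{b_y}} \leq \norm{P_\nu}$ inherited by $*$-strong continuity from $T_{b_z} = (U_z^q|_{A^q_\nu})^*U_z^p|_{A^p_\nu}$), the bound follows after letting $\epsilon \to 0$.

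The main obstacle is the sup-version of the Seidel construction. In Lemma \ref{lem6} the infimum $\nu(B|_F)$ decreases under enlarging $F$, so the recursive shrinking of supports is monotone in the favorable direction; here $\norm{BP_\alpha|_F}$ increases with $F$, so the monotonicity runs the other way and the recursion must be set up carefully to ensure that the near-maximum is preserved after each shift. The doubling condition $r_{t_{k+1}} > 2r_{t_k}$ remains crucial to absorb the enlargement of supports $D(0,r_{t_k}) \to D(0, r_{t_k} + \beta(0,w))$ introduced at each iteration. The factor $\norm{P_\alpha}$ in the final estimate is unavoidable and comes precisely from passing between the $L^p_\nu$-operator norm of $A_y T_{b_y} P_\alpha$ and the $A^p_\nu$-norm of $A_y T_{b_y}$.
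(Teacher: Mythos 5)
Your upper-bound argument is essentially the paper's: $K_x = 0$ for compact $K$, $U^p_{z_\gamma}$ and $(U^q_{z_\gamma})^*$ are isometries, $(U_{z_\gamma}^q|_{A^q_\nu})^*$ has norm at most $\norm{P_\nu}$, and the norm is lower semicontinuous with respect to $*$-strong convergence (the paper invokes Banach--Steinhaus to the same effect). Taking the infimum over $K$ and the supremum over $x$ gives the bound.

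The lower bound is where there is a genuine gap. You propose to run the Seidel iteration (the analogue of Lemma \ref{lem6}): start with $x_n \in \Mf\setminus\B^n$ nearly maximizing $\norm{A_{x_n}T_{b_{x_n}}P_\alpha}$, pick unit vectors in small balls, shift supports toward the origin via a norm analogue of Lemma \ref{lem5}, and extract a limit point $y$. What this machine produces, as in the paper's Lemma \ref{lem7}, is a boundary point $y$ for which $\norm{A_yT_{b_y}P_\alpha} = \sup_x \norm{A_xT_{b_x}P_\alpha}$, i.e.\ that the supremum is attained. It does \emph{not} produce a compact operator $K$ with $\norm{A+K} \leq \norm{A_yT_{b_y}P_\alpha}+\epsilon$: the iteration never touches $A$ itself, only its limit operators, and there is no mechanism in your sketch that converts the shifted unit vectors into a witness for the essential norm of $A$. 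That step is asserted (``one obtains a specific compact operator $K$ \dots'') but not derived, and it is exactly the content of the inequality you are trying to prove. You have conflated Theorem \ref{thm5} (the two-sided estimate) with Lemma \ref{lem7} (the supremum is a maximum), which in the paper are separate statements proved by different means.

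The paper's actual proof of the lower bound is shorter and does not use the Seidel iteration at all. One first notes $\norm{A+\Kc(A^p_\nu)} \leq \inf_{K\in\Kc(L^p_\nu,A^p_\nu)}\norm{AP_\alpha+K}$ and then argues by contradiction: if $\inf_K\norm{AP_\alpha+K} > \sup_x\norm{A_xT_{b_x}P_\alpha} + \epsilon$, then since $P_\alpha M_{\chi_{D(0,s)}}$ is compact, $\norm{AP_\alpha|_{\B^n\setminus D(0,s)}} > \sup_x\norm{A_xT_{b_x}P_\alpha}+\epsilon$ for every $s>0$. Proposition \ref{prop9} localizes this to balls $D(w_s,r_t)$ whose centers $w_s$ escape to the boundary; passing to a convergent subnet $w_s\to x$, the commutation $P_\alpha U^p_{w_s}=U^p_{w_s}P_\alpha$, the strong convergence of $U^p_{w_s}AU^p_{w_s}|_{A^p_\nu}$ to $A_xT_{b_x}$, and the compactness of $P_\alpha M_{\chi_{D(0,r_t)}}$ yield $\norm{A_xT_{b_x}P_\alpha M_{\chi_{D(0,r_t)}}} \geq \sup_x\norm{A_xT_{b_x}P_\alpha}+\epsilon/2$, a contradiction. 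Your final arithmetic step, $\norm{A_yT_{b_y}P_\alpha}\leq\norm{P_\alpha}\norm{T_{b_y}}\norm{A_y}\leq\norm{P_\alpha}\norm{P_\nu}\norm{A_y}$, is correct and also appears (implicitly) at the end of the paper's argument, but it only finishes the proof once the intermediate inequality $\inf_K\norm{AP_\alpha+K}\leq\sup_x\norm{A_xT_{b_x}P_\alpha}$ has been properly established.
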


\begin{proof}
Let $x \in \Mf \setminus \B^n$, $K \in \Kc(A^p_{\nu})$ and choose a net $(z_{\gamma})$ in $\B^n$ that converges to $x$. As $K$ is compact, we get $K_x = 0$ by \cite[Proposition 4.12, Theorem 5.5]{MiSuWi}. Banach-Steinhaus thus implies
\[\norm{A_x} = \norm{A_x+K_x} = \norm{(A+K)_x} \leq \sup\limits_{\gamma} \norm{U_{z_{\gamma}}^p(A+K)(U_{z_{\gamma}}^q|_{A^q_{\nu}})^*} \leq \norm{P_{\nu}}\norm{A+K},\]
where we used that $U_{z_{\gamma}}^p$ and $(U_{z_{\gamma}}^q)^*$ are isometries. Since this is true for all $K \in \Kc(A^p_{\nu})$ and $x \in \Mf \setminus \B^n$, we get
\[\sup\limits_{x \in \Mf \setminus \B^n} \norm{A_x} \leq \norm{P_{\nu}}\norm{A + \Kc(A^p_{\nu})}.\]

For the other inequality observe that
\[\norm{A + \Kc(A^p_{\nu})} \leq \inf\limits_{K \in \Kc(L^p_{\nu},A^p_{\nu})}\norm{AP_{\alpha} + K}.\]
Indeed, 
\[\norm{AP_{\alpha} + K} = \sup\limits_{\norm{f} = 1} \norm{(AP_{\alpha} + K)f} \geq \sup\limits_{\substack{f \in A^p_{\nu},\\ \norm{f} = 1}} \norm{(AP_{\alpha} + K)f} = \sup\limits_{\substack{f \in A^p_{\nu},\\ \norm{f} = 1}} \norm{(A + K)f} = \norm{A+K|_{A^p_{\nu}}}\]
for every $K \in \Kc(L^p_{\nu},A^p_{\nu})$. We will now show
\begin{equation} \label{eq_proof_thm5}
\inf\limits_{K \in \Kc(L^p_{\nu},A^p_{\nu})}\norm{AP_{\alpha} + K} \leq \sup\limits_{x \in \Mf \setminus \B^n} \norm{A_xT_{b_x}P_{\alpha}},
\end{equation}
which obviously implies the desired inequality. So assume that \eqref{eq_proof_thm5} is violated, i.e.~that there is an $\epsilon > 0$ such that
\[\inf\limits_{K \in \Kc(L^p_{\nu},A^p_{\nu})}\norm{AP_{\alpha} + K} > \sup\limits_{x \in \Mf \setminus \B^n} \norm{A_xT_{b_x}P_{\alpha}} + \epsilon.\]
In particular,
\[\norm{AP_{\alpha}|_{\B^n \setminus D(0,s)}} = \norm{AP_{\alpha}M_{1-\chi_{D(0,s)}}} = \norm{AP_{\alpha} - AP_{\alpha}M_{\chi_{D(0,s)}}} > \sup\limits_{x \in \Mf \setminus \B^n} \norm{A_xT_{b_x}P_{\alpha}} + \epsilon\]
for all $s > 0$ since $P_{\alpha}M_{\chi_{D(0,s)}} \in \Kc(L^p_{\nu},A^p_{\nu})$ by Proposition \ref{prop2.5}. Now, by Proposition \ref{prop9}, there is a $t \in (0,1)$ such that for all $s > 0$ we have
\[\vertiii{AP_{\alpha}|_{\B^n \setminus D(0,s)}}_t \geq \norm{AP_{\alpha}|_{\B^n \setminus D(0,s)}} - \frac{\epsilon}{2} > \sup\limits_{x \in \Mf \setminus \B^n} \norm{A_xT_{b_x}P_{\alpha}} + \frac{\epsilon}{2}.\]
In particular, for every $s > 0$ we get a $w_s \in \B^n$ such that
\[\norm{AP_{\alpha}M_{\chi_{D(w_s,r_t)}}} \geq \norm{AP_{\alpha}M_{\chi_{D(w_s,r_t) \setminus D(0,s)}}} > \sup\limits_{x \in \Mf \setminus \B^n} \norm{A_xT_{b_x}P_{\alpha}} + \frac{\epsilon}{2}.\]
It is clear that $w_s \to \partial\B^n$ as $s \to \infty$. Since $M_{\chi_{D(w_s,r_t)}} = U_{w_s}^pM_{\chi_{D(0,r_t)}}U_{w_s}^p$ and $P_{\alpha}U_{w_s}^p = U_{w_s}^pP_{\alpha}$ (see the proof of Proposition \ref{prop4}), we get
\[\norm{U_{w_s}^pAU_{w_s}^pP_{\alpha}M_{\chi_{D(0,r_t)}}} > \sup\limits_{x \in \Mf \setminus \B^n} \norm{A_xT_{b_x}P_{\alpha}} + \frac{\epsilon}{2}.\]
As $\Mf$ is compact, $(w_s)$ has a convergent subnet, denoted again by $(w_s)$, converging to some $x \in \Mf \setminus \B^n$. Thus $U_{w_s}^pAU_{w_s}^p|_{A^p_{\nu}}$ converges strongly to $A_xT_{b_x}$ and hence
\[\norm{U_{w_s}^pAU_{w_s}^pP_{\alpha}M_{\chi_{D(0,r_t)}}} \to \norm{A_xT_{b_x}P_{\alpha}M_{\chi_{D(0,r_t)}}}\]
since $P_{\alpha}M_{\chi_{D(0,r_t)}}$ is compact. This yields
\[\norm{A_xT_{b_x}P_{\alpha}M_{\chi_{D(0,r_t)}}} \geq \sup\limits_{x \in \Mf \setminus \B^n} \norm{A_xT_{b_x}P_{\alpha}} + \frac{\epsilon}{2},\]
which is certainly a contradiction. Thus $\inf\limits_{K \in \Kc(L^p_{\nu},A^p_{\nu})}\norm{AP_{\alpha} + K} \leq \sup\limits_{x \in \Mf \setminus \B^n} \norm{A_xT_{b_x}P_{\alpha}}$.
\end{proof}

For $p \geq 2$ we get the following corollary by using the adjoint of $P_{\alpha}$ instead.

\begin{cor} \label{cor5}
Let $p \geq 2$, $\alpha = (1 - \frac{2}{p})(n+1) + 2\nu(1-\frac{1}{p})$ and $A \in \Tf_{p,\nu}$. Then
\[\frac{1}{\norm{P_{\alpha}^*}\norm{P_{\nu}}}\norm{A + \Kc(A^p_{\nu})} \leq \sup\limits_{x \in \Mf \setminus \B^n} \norm{A_x} \leq \norm{P_{\nu}}\norm{A + \Kc(A^p_{\nu})}.\]
\end{cor}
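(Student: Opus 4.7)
The plan is to derive the corollary from Theorem \ref{thm5} by duality. Set $q := p/(p-1) \in (1, 2]$. Since $T_f^* = T_{\bar f}$ and the Toeplitz algebra is closed under sums, products and norm limits, the Banach adjoint $A^*$ of any $A \in \Tf_{p,\nu}$ lies in $\Tf_{q,\nu} \subset \Lc(A^q_\nu)$. A direct computation shows that the $\alpha$ appearing in the corollary satisfies $\alpha = (\frac{2}{q} - 1)(n+1) + \frac{2\nu}{q}$, which is exactly the exponent that Theorem \ref{thm5} requires for the Banach-space exponent $q \leq 2$. Applying Theorem \ref{thm5} to $A^*$ thus yields
\[
\frac{1}{\norm{P_\alpha}_{\Lc(L^q_\nu)}\norm{P_\nu}_{\Lc(L^q_\nu)}} \norm{A^* + \Kc(A^q_\nu)} \leq \sup_{x \in \Mf \setminus \B^n} \norm{(A^*)_x} \leq \norm{P_\nu}_{\Lc(L^q_\nu)} \norm{A^* + \Kc(A^q_\nu)}.
\]

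The remainder is a matter of translating each quantity back to the $p$-side via three standard identifications. First, the Banach-adjoint map is an isometric anti-isomorphism of Calkin algebras, so $\norm{A^* + \Kc(A^q_\nu)} = \norm{A + \Kc(A^p_\nu)}$. Second, the identity $(A^*)_x = (A_x)^*$, already invoked in the proof of Theorem \ref{thm1}, gives $\norm{(A^*)_x} = \norm{A_x}$; the indexing set $\Mf \setminus \B^n$ is the same on both sides since $\Mf$ is defined intrinsically as the maximal ideal space of $\BUC(\B^n)$. Third, $P_\alpha^*$ in the corollary is by definition the Banach adjoint of $P_\alpha \from L^q_\nu \to L^q_\nu$, whence $\norm{P_\alpha^*}_{\Lc(L^p_\nu)} = \norm{P_\alpha}_{\Lc(L^q_\nu)}$; and because $P_\nu$ is self-adjoint on $L^2_\nu$ (being the orthogonal projection onto $A^2_\nu$), its extensions to $L^p_\nu$ and $L^q_\nu$ share the same operator norm, so the unsubscripted $\norm{P_\nu}$ in the corollary is unambiguous.

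Inserting these identifications into the inequality from Theorem \ref{thm5} produces the corollary directly. The only mild obstacle is the bookkeeping verification that $\Tf_{p,\nu}$ is stable under the Banach adjoint (routine from $T_f^* = T_{\bar f}$ together with the definition of $\Tf_{p,\nu}$ as a closed algebra generated by Toeplitz operators) and that the limit-operator construction is compatible with adjoints in the required sense. Both are already implicit in the machinery established in Section \ref{Limit_operators}, so the argument reduces to bookkeeping once the dualization is set up.
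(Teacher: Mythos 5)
Your proposal is correct and follows exactly the route the paper indicates (the paper dismisses the proof with the remark that it follows ``by using the adjoint of $P_{\alpha}$ instead,'' i.e.\ by passing to $A^*$ on $A^q_\nu$ and invoking Theorem~\ref{thm5}). The arithmetic identification $\alpha = (\tfrac{2}{q}-1)(n+1) + \tfrac{2\nu}{q}$, the Calkin-norm duality, the relation $(A^*)_x = (A_x)^*$ (already used in the proof of Theorem~\ref{thm1}), and the equality $\norm{P_\alpha^*}_{\Lc(L^p_\nu)} = \norm{P_\alpha}_{\Lc(L^q_\nu)}$ are all the right ingredients, so the argument goes through as you describe.
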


\begin{lem} \label{lem7}
Let $p \leq 2$, $\alpha = (\frac{2}{p} - 1)(n+1) + \frac{2\nu}{p}$ and $A \in \Tf_{p,\nu}$. Then there exists a $y \in \Mf \setminus \B^n$ with
\[\|A_yT_{b_y}P_{\alpha}\| = \sup\set{\|A_xT_{b_x}P_{\alpha}\| : x \in \Mf \setminus \B^n}.\]
\end{lem}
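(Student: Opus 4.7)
The plan is to mirror the Seidel-style diagonal argument of Lemma \ref{lem6}, replacing the infima $\nu(\hat{A}_x|_F)$ by the suprema $\norm{A_xT_{b_x}P_\alpha|_F}$ and reversing all inequalities, with Proposition \ref{prop9} playing the role of Proposition \ref{prop5}. The key enabling observation is that the shift identity underlying Lemma \ref{lem5} restricts cleanly to $A_xT_{b_x}P_\alpha$: since $P_\alpha$ commutes with $U_w^p$ (see \eqref{eq3}) and with any composition operator $\tilde{V}_*$ induced by a unitary map on $\C^n$, the equality $U_w^p\hat{A}_xU_w^p = \tilde{V}_*^{-1}\hat{A}_y\tilde{V}_*$ yields $U_w^pA_xT_{b_x}P_\alpha U_w^p = \tilde{V}_*^{-1}A_yT_{b_y}P_\alpha\tilde{V}_*$. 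Consequently, for $g := \tilde{V}_*U_w^pf$ we have $\norm{A_yT_{b_y}P_\alpha g} = \norm{A_xT_{b_x}P_\alpha f}$, and by taking inverses, $\norm{A_xT_{b_x}P_\alpha|_{D(0,r)}} \leq \norm{A_yT_{b_y}P_\alpha|_{D(0,r+\beta(0,w))}}$.

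First I would apply Proposition \ref{prop9} to extract a sequence $(t_k)_{k \in \N_0}$ with $r_{t_{k+1}} > 2r_{t_k}$ and
\[\vertiiis{BP_\alpha|_F}_{t_k} \geq \norm{BP_\alpha|_F} - 2^{-(k+1)}\]
for every Borel set $F \subseteq \B^n$ and every $B \in \set{A} \cup \set{A_xT_{b_x} : x \in \Mf \setminus \B^n}$. Then I would choose $(x_n)_{n \in \N}$ in $\Mf \setminus \B^n$ with $\norm{A_{x_n}T_{b_{x_n}}P_\alpha} \to \sup\set{\norm{A_xT_{b_x}P_\alpha} : x \in \Mf \setminus \B^n}$, and pick $f_n^0$ with $\norm{f_n^0} = 1$, $\supp f_n^0 \subseteq D(w, r_{t_n})$ for some $w \in \B^n$, and $\norm{A_{x_n}T_{b_{x_n}}P_\alpha f_n^0} \geq \norm{A_{x_n}T_{b_{x_n}}P_\alpha} - 2^{-n}$. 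The adapted Lemma \ref{lem5} then produces $y_n^0 \in \Mf \setminus \B^n$ and $g_n^0$ with $\norm{g_n^0} = 1$, $\supp g_n^0 \subseteq D(0, r_{t_n})$, and equal norm.

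Iterating exactly as in Lemma \ref{lem6}, for $k = 1, \ldots, n$ I pick $f_n^k$ supported in $D(w, r_{t_{n-k}}) \cap D(0, r_{t_{n-k+1}})$ (with $\beta(0,w) \leq r_{t_{n-k+1}} + r_{t_{n-k}}$) that approximates $\norm{A_{y_n^{k-1}}T_{b_{y_n^{k-1}}}P_\alpha|_{D(0, r_{t_{n-k+1}})}}$ to within $2^{-n+k}$, and shift to obtain $y_n^k$ and $g_n^k$ supported in $D(0, r_{t_{n-k}})$. Induction gives
\[\norm{A_{y_n^k}T_{b_{y_n^k}}P_\alpha|_{D(0, r_{t_{n-k}})}} \geq \norm{A_{x_n}T_{b_{x_n}}P_\alpha} - 2^{-n+k+1}.\]
Simultaneously, applying the shift-back inequality at each stage and using $r_{t_{j+1}} > 2r_{t_j}$, the telescoping bound $\norm{A_{y_n^{n-l}}T_{b_{y_n^{n-l}}}P_\alpha|_{D(0, r_{t_l})}} \leq \norm{A_{y_n^n}T_{b_{y_n^n}}P_\alpha|_{D(0, 4r_{t_l})}}$ follows exactly as in Lemma \ref{lem6}.

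Finally, set $y_n := y_n^n$. By Proposition \ref{prop7}, a subnet of $(A_{y_n}T_{b_{y_n}})$ converges strongly to $A_yT_{b_y}$ for some $y \in \Mf \setminus \B^n$. The compactness of $P_\alpha M_{\chi_{D(0, 4r_{t_l})}}$ (Proposition \ref{prop2.5}) upgrades this to norm convergence of $(A_{y_{n_\gamma}}T_{b_{y_{n_\gamma}}} - A_yT_{b_y})P_\alpha M_{\chi_{D(0, 4r_{t_l})}}$ to zero, which via the straightforward supremum-analog of Proposition \ref{prop6} yields $\norm{A_{y_{n_\gamma}}T_{b_{y_{n_\gamma}}}P_\alpha|_{D(0, 4r_{t_l})}} \to \norm{A_yT_{b_y}P_\alpha|_{D(0, 4r_{t_l})}}$. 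Chaining the estimates,
\[\norm{A_yT_{b_y}P_\alpha} \geq \norm{A_yT_{b_y}P_\alpha|_{D(0, 4r_{t_l})}} \geq \sup_{x \in \Mf \setminus \B^n}\norm{A_xT_{b_x}P_\alpha} - 2^{-l+1}\]
for every $l$; sending $l \to \infty$ identifies $y$ as the extremizer. The main obstacle is the careful bookkeeping of reversed inequalities and verifying that Lemma \ref{lem5} transfers to $A_xT_{b_x}P_\alpha$, both of which are essentially mechanical once the commutativity of $P_\alpha$ with $U_w^p$ and $\tilde{V}_*$ is noted.
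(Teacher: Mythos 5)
Your proof is correct and follows exactly the route the paper indicates: it replaces $\nu$ by $\norm{\cdot}$ and $\nu_t$ by $\vertiiis{\cdot}_t$, swaps Proposition~\ref{prop5} for Proposition~\ref{prop9}, and reverses the inequalities in the Seidel-style diagonal argument of Lemma~\ref{lem6}. The one non-mechanical step you correctly supply -- that the shift identity of Lemma~\ref{lem5} descends from $\hat{A}_x$ to $A_xT_{b_x}P_\alpha$ because $P_\alpha$ commutes with $U_w^p$ and $\tilde V_*$ -- is precisely what makes the transfer legitimate, so the argument goes through.
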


\begin{proof}
Replacing all $\nu$ by $\norm{\cdot}$ and $\nu_t$ by $\vertiii{\cdot}_t$ in the proof of Lemma \ref{lem6} and using Proposition \ref{prop9} instead of Proposition \ref{prop5} one easily obtains a proof of Lemma \ref{lem7} (see also \cite[Theorem 3.2]{HaLiSe}).
\end{proof}

Let us summarize these results in a final theorem. This may be seen as an analogue of Theorem \ref{main_result} and a slight improvement of \cite[Theorem 5.2]{MiSuWi}. Unfortunately this result is far less complete than in the case of the spectrum and thus leaves some questions open: Are $\norm{A + \Kc(A^p_{\nu})}$ and $\sup\limits_{x \in \Mf \setminus \B^n} \norm{A_x}$ equal also for $p \neq 2$? And is the supremum also a maximum in case $p \neq 2$?

\begin{thm} \label{thm7}
Let $A \in \Tf_{p,\nu}$ and $\alpha = \abs{\frac{2}{p} - 1}(\nu+n+1) + \nu$. Then
\[\frac{1}{\norm{P_{\alpha}}\norm{P_{\nu}}}\norm{A + \Kc(A^p_{\nu})} \leq \sup\limits_{x \in \Mf \setminus \B^n} \norm{A_x} \leq \norm{P_{\nu}}\norm{A + \Kc(A^p_{\nu})},\]
where the norm of $P_{\alpha}$ is taken on $L^{\min\set{p,q}}_{\nu}$ ($\frac{1}{p} + \frac{1}{q} = 1$ as usual). Moreover,
\[\norm{A + \Kc(A^p_{\nu})} = \max\limits_{x \in \Mf \setminus \B^n} \norm{A_x}\]
if $p = 2$.
\end{thm}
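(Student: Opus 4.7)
My plan is to assemble Theorem~\ref{thm7} from the three pieces already established in this section: Theorem~\ref{thm5}, Corollary~\ref{cor5} and Lemma~\ref{lem7}, so the argument is essentially one of bookkeeping.

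For the two-sided norm estimate I would first verify by a short algebraic manipulation that the value $\alpha = \abs{\frac{2}{p}-1}(\nu+n+1) + \nu$ coincides with the choice $(\frac{2}{p}-1)(n+1) + \frac{2\nu}{p}$ used in Theorem~\ref{thm5} when $p \leq 2$, and with the choice $(1-\frac{2}{p})(n+1) + 2\nu(1-\frac{1}{p})$ used in Corollary~\ref{cor5} when $p \geq 2$. The inequalities then follow by a case split: Theorem~\ref{thm5} supplies them for $p \leq 2$ and Corollary~\ref{cor5} for $p \geq 2$. The convention that $\norm{P_{\alpha}}$ is taken on $L^{\min\{p,q\}}_{\nu}$ is the correct one, since Theorem~\ref{thm5} measures $\norm{P_{\alpha}}$ on $L^p_{\nu}$ (the case $\min = p$), while Corollary~\ref{cor5} uses $\norm{P_{\alpha}^*}$, which by duality equals the norm of the corresponding projection on $L^q_{\nu}$ (the case $\min = q$).

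For the equality in the case $p = 2$, both inequalities collapse. Indeed, $q = 2$ as well, so $\alpha = \nu$, and since $P_{\nu}$ is the orthogonal projection on the Hilbert space $L^2_{\nu}$ we have $\norm{P_{\nu}} = 1$; thus the two-sided estimate reduces to $\norm{A + \Kc(A^2_{\nu})} = \sup_{x \in \Mf \setminus \B^n}\norm{A_x}$. To upgrade this supremum to a maximum I would observe that for $p = q = 2$ the exponent $(\nu+n+1)(1/q-1/p)$ appearing in the definition of $b_z$ vanishes, hence $b_z \equiv 1$ and $T_{b_x} = I$ for every $x \in \Mf$. Lemma~\ref{lem7} therefore yields a $y \in \Mf \setminus \B^n$ with $\norm{A_y P_{\nu}} = \sup_{x} \norm{A_x P_{\nu}}$, and since $A_x P_{\nu}$ viewed as an operator on $L^2_{\nu}$ agrees with $A_x$ on $A^2_{\nu}$ and vanishes on its orthogonal complement, its operator norm equals $\norm{A_x}$. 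This $y$ attains the supremum.

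The only obstacle I anticipate is the algebraic bookkeeping around the two values of $\alpha$ and the reinterpretation of $\norm{P_{\alpha}^*}$ on $L^q_{\nu}$ as a norm of $P_{\alpha}$ on $L^{\min\{p,q\}}_{\nu}$; all substantial analytic work has already been packaged in the cited results, and no new estimate is needed.
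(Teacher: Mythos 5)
Your argument is correct and follows the same route the paper takes: the two-sided estimate is read off from Theorem~\ref{thm5} and Corollary~\ref{cor5} once the formula for $\alpha$ is matched by the case split, and the attainment for $p=2$ comes from Lemma~\ref{lem7} together with $T_{b_x}=I$ and $\norm{A_xP_\nu}_{\Lc(L^2_\nu)}=\norm{A_x}_{\Lc(A^2_\nu)}$. One small wording caveat: pure duality gives $\norm{P_\alpha^*}_{\Lc(L^q_\nu)}=\norm{P_\alpha}_{\Lc(L^p_\nu)}$, so the identification of $\norm{P_\alpha^*}$ with the norm of $P_\alpha$ \emph{on} $L^q_\nu$ actually rests on the kernel of $P_\alpha$ being Hermitian (so $P_\alpha^*$ is again $P_\alpha$, now acting on $L^q_\nu$), not on duality alone; the conclusion is unchanged.
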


\begin{proof}
The first statement is a combination of Theorem \ref{thm5} and Corollary \ref{cor5}. In case $p = 2$ we have $\norm{P_{\alpha}} = \norm{P_{\nu}} = 1$ and therefore
\[\norm{A + \Kc(A^p_{\nu})} = \sup\limits_{x \in \Mf \setminus \B^n} \norm{A_x}\]
(see also \cite[Theorem 5.6]{MiSuWi}). Moreover, as the norms of $A_xT_{b_x}P_{\alpha} = A_xP_{\nu}$ and $A_x$ coincide in this case, the second statement follows from Lemma \ref{lem7}.
\end{proof}

\section{Application to symbols of vanishing oscillation} \label{VO}

In this section we apply Theorem \ref{main_result} to the case of functions of vanishing oscillation. Even though the results obtained in this section are not completely new, it is worth mentioning that they are special cases of Theorem \ref{main_result}.

For $z \in \B^n$ and a bounded continuous ($\BC$) function $f \from \B^n \to \C$ we define
\[\Osc_z(f) := \sup\set{\abs{f(z)-f(w)} : w \in \B^n, \beta(z,w) \leq 1}\]
and $\VO_{\partial}(\B^n) := \set{f \in \BC(\B^n) : \lim\limits_{|z| \to 1} \Osc_z(f) = 0}$. Note that
\[C(\overline{\B^n}) \subset \VO_{\partial}(\B^n) \subset \BUC(\B^n).\]
Applying Corollary \ref{cor3} to Toeplitz operators with symbol in $\VO_{\partial}(\B^n)$, we obtain the following result:

\begin{prop} \label{prop8}
Let $f \in \VO_{\partial}(\B^n)$. Then
\[\spec_{\ess}(T_f) = f(\partial\B^n),\]
where $f(\partial\B^n)$ denotes the set of limit points of $f$ as $z \to \partial\B^n$.
\end{prop}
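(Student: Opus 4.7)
The strategy is to apply Theorem \ref{main_result} to each $T_f - \lambda I = T_{f - \lambda}$. Since $\VO_{\partial}(\B^n) \subset \BUC(\B^n)$, the symbol $f$ extends continuously to the compactification $\Mf$; in particular $f(x)$ is defined for every $x \in \Mf \setminus \B^n$, and $\{f(x) : x \in \Mf \setminus \B^n\}$ coincides with the cluster set of $f$ at $\partial \B^n$ appearing in the statement. The central claim is
\[
(T_f)_x = f(x)\, T_{b_x}^{-1} \qquad \text{for every } x \in \Mf \setminus \B^n.
\]
Granted this, linearity of the limit-operator map gives $(T_{f - \lambda})_x = (f(x) - \lambda)\, T_{b_x}^{-1}$, and since $T_{b_x}^{-1}$ is invertible this limit operator fails to be invertible precisely when $f(x) = \lambda$. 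Theorem \ref{main_result} applied to $T_{f - \lambda}$ then yields $\lambda \in \spec_{\ess}(T_f)$ iff $f(x) = \lambda$ for some $x \in \Mf \setminus \B^n$, which is the desired identity.

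For the claim I would fix a net $z_\gamma \to x$ and split
\[
U_{z_\gamma}^p T_f (U_{z_\gamma}^q|_{A^q_\nu})^* = f(z_\gamma)\, U_{z_\gamma}^p(U_{z_\gamma}^q|_{A^q_\nu})^* + U_{z_\gamma}^p T_{f - f(z_\gamma)} (U_{z_\gamma}^q|_{A^q_\nu})^*.
\]
The first summand equals $f(z_\gamma) T_{b_{z_\gamma}}^{-1}$ and converges strongly to $f(x) T_{b_x}^{-1}$ by Proposition \ref{prop7} together with continuity of $f$ on $\Mf$. It thus suffices to show the second summand tends to $0$ strongly. For fixed $g \in A^p_\nu$, the net $h_\gamma := (U_{z_\gamma}^q|_{A^q_\nu})^* g$ is uniformly bounded in $A^p_\nu$, and using that $U_{z_\gamma}^p$ is an isometry together with $T_{f - f(z_\gamma)} = P_\nu M_{f - f(z_\gamma)}$ this reduces to proving
\[
\norm{M_{f - f(z_\gamma)}\, h_\gamma}_{L^p_\nu} \longrightarrow 0 \qquad \text{as } z_\gamma \to x.
\]

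I expect this $L^p$-estimate to be the main obstacle, and my plan is a near/far decomposition $\B^n = D(z_\gamma, R) \cup (\B^n \setminus D(z_\gamma, R))$ with $R$ a free parameter. Near $z_\gamma$, chaining $\beta$-steps of length at most $1$ and the definition of $\VO_{\partial}(\B^n)$ yield $\sup_{w \in D(z_\gamma, R)} \abs{f(w) - f(z_\gamma)} \to 0$ as $z_\gamma \to \partial \B^n$ for each fixed $R$, since every point of the chain lies in a uniformly bounded $\beta$-ball around $z_\gamma$ and hence tends to the boundary with it. Far from $z_\gamma$, combining $D(z_\gamma, R) = \phi_{z_\gamma}(D(0, R))$ with the translation formula \eqref{eq2} applied to $\xi = \chi_{\B^n \setminus D(0, R)}$ and with $T_{b_{z_\gamma}}^{-1} = U_{z_\gamma}^p(U_{z_\gamma}^q|_{A^q_\nu})^*$ produces the identity
\[
\norm{M_{\chi_{\B^n \setminus D(z_\gamma, R)}}\, h_\gamma}_{L^p_\nu} = \norm{M_{\chi_{\B^n \setminus D(0, R)}}\, T_{b_{z_\gamma}}^{-1} g}_{L^p_\nu}.
\]
Proposition \ref{prop7} gives $T_{b_{z_\gamma}}^{-1} g \to T_{b_x}^{-1} g$ in $L^p_\nu$, while the tails of the limit $T_{b_x}^{-1} g \in L^p_\nu$ vanish as $R \to \infty$ by dominated convergence; thus choosing $R$ large first and then $z_\gamma$ sufficiently close to $x$ makes both the near and far contributions arbitrarily small, which completes the proof.
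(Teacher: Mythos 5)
Your proof is correct and reaches the same key identity $(T_f)_x = f(x)\,T_{b_x}^{-1}$ as the paper, but by a genuinely different (and more self-contained) route. The paper first observes that $f\circ\phi_{z_\gamma}\to x(f)$ uniformly on compacta, then invokes the cited shift formula for Toeplitz operators from Mitkovski--Su\'arez--Wick, namely $U_{z_\gamma}^p T_f(U_{z_\gamma}^q|_{A^q_\nu})^*=T_{b_{z_\gamma}}^{-1}T_{(f\circ\phi_{z_\gamma})b_{z_\gamma}}T_{b_{z_\gamma}}^{-1}$, and passes to the strong limit; your argument avoids that citation by splitting off $f(z_\gamma)T_{b_{z_\gamma}}^{-1}$ and showing the remainder $U_{z_\gamma}^p T_{f-f(z_\gamma)}(U_{z_\gamma}^q|_{A^q_\nu})^*$ tends strongly to zero through a near/far decomposition: the near part is controlled by the vanishing-oscillation estimate $\sup_{D(z_\gamma,R)}|f-f(z_\gamma)|\to 0$ for fixed $R$, and the far part is handled by the translation identity $\|M_{\chi_{\B^n\setminus D(z_\gamma,R)}}h_\gamma\|=\|M_{\chi_{\B^n\setminus D(0,R)}}T_{b_{z_\gamma}}^{-1}g\|$ together with the strong convergence $T_{b_{z_\gamma}}^{-1}g\to T_{b_x}^{-1}g$ and the $L^p$-tail decay of the limit. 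This buys you a proof that does not depend on the black-box formula from \cite{MiSuWi} (at the cost of a longer direct estimate), while the paper's version is shorter but relies on that external ingredient; the concluding reduction to Theorem~\ref{main_result} via $T_f-\lambda I=T_{f-\lambda}$ is the same in substance as the paper's appeal to Corollary~\ref{cor3}.
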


In case $f$ is contained in $C(\overline{\B^n})$, $f(\partial\B^n)$ is just the image of $f|_{\partial\B^n}$ and we obtain the classical result mentioned in the introduction.

\begin{proof}
Let $(z_{\gamma})$ be a net in $\B^n$ converging to some $x \in \Mf \setminus \B^n$ and consider $f \in \VO_{\partial}(\B^n) \subset \BUC(\B^n)$. Then we have $(f \circ \phi_{z_{\gamma}})(0) = f(z_{\gamma}) \to x(f)$. Moreover, since $\beta(\phi_{z_{\gamma}}(0),\phi_{z_{\gamma}}(w)) = \beta(0,w)$, we get
\[\abs{(f \circ \phi_{z_{\gamma}})(0) - (f \circ \phi_{z_{\gamma}})(w)} \leq \Osc_{z_{\gamma}}(f) \to 0\]
if $\beta(0,w) \leq 1$. Thus $(f \circ \phi_{z_{\gamma}})(w) \to x(f)$ uniformly on $\set{w \in \B^n : \beta(0,w) \leq 1}$. By repeating this argument and using that $\abs{\phi_{z_{\gamma}}(w)} \to 1$ if $\abs{z_{\gamma}} \to 1$, this generalizes to arbitary compact subsets of $\B^n$. Therefore $f \circ \phi_{z_{\gamma}}$ converges uniformly on compact sets to the constant function $x(f)$. Using \cite[p.~222]{MiSuWi}, we obtain
\[(T_f)_x = \slim\limits_{z_{\gamma} \to x} U^p_{z_{\gamma}}T_f(U^q_{z_{\gamma}}|_{A^q_{\nu}})^* = \slim\limits_{z_{\gamma} \to x} T_{b_{z_{\gamma}}}^{-1}T_{(f \circ \phi_{z_{\gamma}})b_{z_{\gamma}}}T_{b_{z_{\gamma}}}^{-1} = T_{b_x}^{-1}T_{x(f) \cdot b_x}T_{b_x}^{-1} = x(f)T_{b_x}^{-1}.\]
As $T_{b_x}^{-1}$ is always invertible, this implies that $(T_f)_x$ is invertible if and only if $x(f) \neq 0$. Corollary \ref{cor3} thus simplifies to
\begin{equation} \label{essspec_VO}
\spec_{\ess}(T_f) = \bigcup\limits_{x \in \Mf \setminus \B^n} x(f) = f(\partial\B^n)
\end{equation}
in this case.
\end{proof}

Let us add two final remarks to this result.

\begin{rem}
If we introduce $\Mf_{\VO_{\partial}}$ as the maximal ideal space of $\VO_{\partial}$, we can formulate Proposition \ref{prop8} like this:
\begin{equation} \label{essspec_VO2}
\spec_{\ess}(T_f) = \bigcup\limits_{x \in \Mf_{\VO_{\partial}} \setminus \B^n} x(f) =: f(\Mf_{\VO_{\partial}} \setminus \B^n)
\end{equation}
for $f \in \VO_{\partial}$. This can be seen as follows. Let $\iota \from \VO_{\partial} \to \BUC(\B^n)$ be the inclusion mapping. By transposition, this induces the continuous map $\pi \from \Mf \to \Mf_{\VO_{\partial}}$, $(\pi(x))(f) := x(\iota(f))$. \eqref{essspec_VO2} thus follows from \eqref{essspec_VO}.
\end{rem}

\begin{rem}
In \cite{PeVi,TaVi,Zhu87} similar results were shown for symbols in $\VMO_{\partial}(\B^n) \cap L^{\infty}(\B^n)$ (see \cite{BBCZ} or \cite{Zhu87} for definitions and descriptions). We can also recover this result from Proposition \ref{prop8}: If $f \in \VMO_{\partial}(\B^n) \cap L^{\infty}(\B^n)$, then
\[\spec_{\ess}(T_f) = \tilde{f}(\partial\B^n),\]
where $\tilde{f}$ denotes the Berezin transform of $f$. Indeed, by \cite[Theorem B]{BBCZ}, $\tilde{f}$ is contained in $\VO_{\partial}(\B^n)$ and $T_{f - \tilde{f}}$ is compact (see also \cite[Theorem 5.5]{MiSuWi}). Thus the assertion follows.
\end{rem}

\section*{Acknowledgements}

I am very grateful to Wolfram Bauer, Marko Lindner and Christian Seifert for their support and useful discussions. I would also like to thank Robert Fulsche for careful reading and pointing out a lot of typos.

\bigskip

\noindent
Raffael Hagger\\
Institute of Analysis\\
Leibniz Universit\"at Hannover\\
Welfengarten 1\\
30167 Hannover\\
GERMANY\\
raffael.hagger@math.uni-hannover.de

\end{document}